\newtheorem{theorem}{Theorem}[section]
\newtheorem{lemma}[theorem]{Lemma}
\newtheorem{proposition}[theorem]{Proposition}
\newtheorem{corollary}[theorem]{Corollary}
\theoremstyle{definition}
\newtheorem{definition}[theorem]{Definition}
\theoremstyle{remark}
\numberwithin{equation}{section}
\newcommand{\I}{\mathbbm{1}}
\newcommand{\Ndb}{{\mathbb N}}
\newcommand{\Cdb}{{\mathbb C}}
\newcommand{\Tdb}{{\mathbb T}}
\newcommand{\Ddb}{{\mathbb D}}
\begin{document}
\title[Characters for noncommutative function algebras]{On vector-valued characters for noncommutative function algebras} 
\author{David P. Blecher}
\address{Department of Mathematics, University of Houston, Houston, TX
77204-3008}
\email[David P. Blecher]{dpbleche@central.uh.edu}
\author{Louis E. Labuschagne}
\address{DSI-NRF CoE in Math. and Stat. Sci, Pure and
Applied Analytics,\\ Internal Box 209, School of Math. \& Stat. Sci.,
NWU, PVT. BAG X6001, 2520 Potchefstroom, South Africa}
\email{louis.labuschagne@nwu.ac.za}
\keywords{Operator algebra; noncommutative function theory; Jensen inequality; Jensen measure; Gleason parts; extension of linear map; von Neumann algebra; conditional expectation}

\date{\today}
\thanks{DB  is supported by a Simons Foundation Collaboration Grant. LL is supported by the National Research Foundation (IPRR Grant 96128 and KIC Grant 171014265824). Any opinion, findings and conclusions or recommendations expressed in this material, are those of the author, and therefore the NRF do not accept any liability in regard thereto.} 

\begin{abstract}  Let $A$ be a closed subalgebra of a $C^*$-algebra, that is a norm-closed algebra of Hilbert space 
operators.    We generalize to such operator algebras  several  
key theorems and concepts from the theory of classical function algebras.    In particular we consider several 
problems  that arise when generalizing classical function algebra results involving characters (nontrivial
 homomorphisms from the algebra into 
the scalars). 
For example, the Jensen inequality, the related Bishop-Ito-Schreiber theorem, and 
the theory of Gleason parts.  
Inspired by Arveson's work on noncommutative Hardy spaces, we 
replace characters (classical function algebra case) by  
 $D$-{\em characters}; certain completely contractive 
homomorphisms $\Phi : A \to D$, where $D$ is a $C^*$-subalgebra of $A$.  
Using Brown's measure  and a potential theoretic balayage argument we prove a partial  noncommutative 
Jensen inequality appropriate  for $C^*$-algebras with a tracial state.  We also show that 
this  Jensen inequality characterizes $D$-characters among the module maps.  Other advances include
a theory of noncommutative Gleason parts appropriate for $D$-characters, which uses Harris' noncommutative hyperbolic metric and 
Schwarz-Pick inequality, and other ingredients.  
As an application of Gleason parts we 
show that in the antisymmetric case, one is guaranteed the existence of a `quantum' Wermer embedding function, and also of non-trivial 
compact Hankel operators, whenever the Gleason part of the canonical trace is rich in tracial states.
\end{abstract}

\maketitle

\section{Introduction} 
In this paper an {\em operator algebra}
 is a unital closed subalgebra $A$ of a unital $C^*$-algebra $B$, or equivalently
 a norm-closed algebra of Hilbert space
operators containing the identity 
operator.   If $B$ is commutative then $A$ may be viewed as a  \emph{function  algebra}; that is a 
norm-closed unital subalgebra $A$ of $C(K)$, for a compact Hausdorff space $K$.   Indeed $A$ may be viewed as a  \emph{uniform algebra}, 
i.e.\ it also separates the points of $K$.   This may be done for example by taking $K$ to be the {\em maximal ideal space} of $A$. A  successful quest to develop a fuller
noncommutative theory of the classical theory of uniform algebras needs to 
identify appropriate operator algebraic variants of several fundamental concepts from 
the latter theory.   Some of the first of these concepts that comes to mind are
 the `characters' of $A$ (which constitute the maximal ideal space of $A$),  the `representing measures' for those characters, 
and also the `geometric mean' of the modulus of an element of $A$ computed with respect to some measure/state.
For the sake of clarity, we pause to review the appropriate classical concepts.   More details may be found in e.g.\ \cite{Gam}.

For a function algebra $A$ on $K$ as above, the multiplicative linear functionals on $A$ assuming the value 1 at the identity 
$\I$ are referred to as the \emph{characters}
of $A$. A positive measure $\mu$ on $K$ is called a \emph{representing measure} for a character $\Phi$ if
$\Phi(f)=\int_K \, f \,d\mu$ for all $f\in A$. The functional $\widetilde{\Phi}(g) = \int_K \, g \,d\mu$ on $C(K)$ is a state on $C(K)$,
and indeed representing measures for $\Phi$ are in a bijective correspondence with the state extensions to $C(K)$ of $\Phi$.
(We recall that a state is a contractive unital linear 
functional.)    We may thus regard the 
set of representing measures for a given character, as the set of all norm-preserving extensions to $C(K)$ of that character.

An obvious choice for  a noncommutative version of characters on an operator algebra $A$ would be the completely contractive unital homomorphisms
$\Phi : A \to B(H)$, for Hilbert spaces $H$.   In this case the `noncommutative representing measures' would be the $B(H)$-valued 
extensions of $\Phi$ to a containing $C^*$-algebra $B$, which are 
completely positive (or equivalently, in this case, completely contractive).    Such noncommutative representing measures $\Psi : B \to B(H)$ always exist,
by Arveson's extension theorem \cite[Theorem 1.2.9]{Arv2}.  However although these noncommutative notions are appropriate in  many settings
(notably, the theory of the noncommutative Shilov and Choquet boundary, see
e.g.\ \cite{Arv2,DK}), they do not facilitate the   generalization of some other important parts of the theory
of uniform algebras.   This is no doubt some 
part of the reason why some of the theory of uniform algebras has not been extended to operator algebras to date.
We launch an effort here to try to remedy some small part of
this, focusing on three or four classical topics involving characters. 
Another good  illustration of the sometimes inappropriateness of the above class of 
noncommutative characters appears in our sequel papers \cite{BLrm,BFZ} on noncommutative versions of the classical Hoffman-Rossi theorem.  The latter may be described as `the existence of weak* continuous representing 
measures'.  It asserts that if $A$ is a weak* closed subalgebra of a commutative $W^*$-algebra $M$, and $\Phi$ is a weak* continuous character on $A$, then  $\Phi$ has a  weak* continuous 
 state extension to $M$ (see e.g.\ \cite{HR} and   \cite[Theorem 3.2]{Gam}).   In \cite{BLrm,BFZ} it is noted that the $B(H)$-valued version of this result fails, and it is proved that there are satisfying 
(but sometimes deep) noncommutative versions which hold for the following smaller class of operator valued homomorphisms on $A$.  

In this paper our `noncommutative characters' on an operator algebra $A$
will usually be the contractive unital homomomorphisms
$\Phi : A \to D$ which are also $D$-module maps (that is, $\Phi(d_1ad_2)=d_1\Psi(a)d_2$ for all $a\in A$, $d_1, d_2\in D$), for a unital $C^*$-subalgebra $D$ of $A$.   We call these the $D$-{\em characters}.
They are necessarily completely contractive
(see the computation a couple of paragraphs above Lemma \ref{exucexp}).   The usual characters may be viewed as the case when $D$ is $\Cdb \I$, the scalar multiples
of the identity, 
and in this case the `noncommutative representing measures' would correspond to the state extensions  to $B$.
In a couple of sections of our paper we will restrict attention  further to the case that 
$A$ is a unital subalgebra of a $C^*$-algebra $B$, where $B$ possesses a fixed 
state or trace $\psi$, and that 
$\Phi$ is $\psi$-{\em preserving}: that is, $\psi \circ \Phi = \psi_{|A}$.    
In this setting one is particularly interested in  `noncommutative representing measures' $\Psi$ for $\Phi$ which are also $\psi$-preserving on $B$;
here one may consider `noncommutative representing measures' to be such a pair $(\Psi, \psi)$.
In this paper $\Psi$ can usually be taken to be $D$-valued, and often $\Psi$ is completely 
determined by $\psi$ and $D$.    Indeed in the classical case $\Phi = \psi(\cdot) \I$.
  A simple motivating noncommutative example: the canonical map 
from  the upper triangular matrices onto $D_n$, the matrices supported on the main diagonal,  is a $D_n$-character.
The associated representing measure pair here is the normalized matrix trace, and the (unique trace preserving)  conditional expectation of $M_n$ onto $D_n$.   

The underlying theme running throughout the paper are the 
natural questions surrounding $D$-characters and their representing measures, arising by 
comparison with the classical theory of uniform algebras (from e.g.\ \cite{Gam}).   We then systematically begin to 
answer these questions.  

There are several  motivations for considering our $D$-characters, besides the fact that
they allow generalization of certain aspects of the classical theory that were previously 
intractable.  
Historically,
$D$-characters played a crucial role in one of the 
most important early papers in the subject of 
nonselfadjoint operator algebras/noncommutative function algebras, Arveson's 
seminal `Analyticity in operator algebras' \cite{Arv}.  
This work initiated a program of using a particular class of $D$-characters,  
von Neumann algebras, and noncommutative $L^p$ spaces, to give 
a vast generalization of the theory of Hardy's $H^p$ spaces.  In the classical theory of uniform 
algebras such `generalized Hardy space' theory takes up a large part of the standard texts.
Arveson gave very many interesting examples of $D$-characters, which also serve as motivating examples for us.
Subsequent to the verification of the noncommutative Szego formula \cite{LL1}, his {\em subdiagonal} setting 
has been developed further by very many authors (see e.g.\ \cite{BLsurv,Ueda} and references therein), 
and is still a very active research area internationally.     Our setting here 
is  much more general than Arveson's, but may be viewed as using his idea to more systematically
understand/transfer some parts of  function algebra theory to noncommutative settings.  

Another motivation for our generalization of characters  
is so that one may tie in to the powerful theory of von Neumann algebraic
conditional expectations.  Indeed the latter may 
be viewed within the setting above of our $D$-characters in the case that $D$ is a von Neumann subalgebra of a von Neumann algebra $B$, 
and $A = D$, and $\Phi$ is the identity map on $D$.  Here `noncommutative representing measures' $\Psi : B \to D$ 
 are necessarily conditional expectations by Tomiyama's result (\cite{tom},
 see p. 132--133 in \cite{Bla} for the main facts about conditional expectations and their relation to bimodule maps and projections maps of norm 1).
The existence of such `representing measures' becomes the important issue of existence of conditional expectations onto a von Neumann subalgebra.
This perspective is highlighted in the sequel  \cite{BLrm}, but also plays 
a major role in several sections of the present paper.   

Given a state $\psi$ on $B$, a geometric mean computed with respect to $\psi$ may be gleaned 
from the theory of the Fuglede-Kadison determinant. Specifically, the $\psi$-geometric mean 
$\Delta_\psi(b)$ of $b\in B_+$ is defined to be the quantity $\exp (\psi(\ln b))$ if $b$ is invertible, with $\lim_{\epsilon \to 0^+} \, \exp \psi (\ln (b + \epsilon \, \I))$ being used if $b$ is not invertible. 
For a tracial state $\tau$ on $B$  the quantity $\Delta_\tau$ 
has almost all the usual properties of the Kadison-Fuglede determinant \cite{FK}, as we show in Lemma \ref{FK}.
It also generalizes the classical quantity $\exp ( \int_K \, \ln |f| \, d \mu)$ found frequently  in the theory of uniform algebras, sometimes in logarithmic form  $\int_K \, \ln |f| \, d \mu$. 
 Indeed if we have a state $\tau$ on $C(K)$, let $\mu$ be the probability measure on $K$ corresponding to $\tau$.  Then $$\Delta_\tau(f) = \lim_{n \to \infty} \, \exp \left( \int_K \, \ln \left(|f| + \frac{1}{n}\right) \, d \mu\right)
 = \exp \left( \int_K \, \ln |f| \, d \mu\right), \qquad f \in C(K),$$
using Lebesgue's monotone convergence theorem. (We note that 
Lebesgue's monotone convergence theorem for decreasing functions works even if functions are allowed to take negative values, provided that their integrals are not $+\infty$.) 

Classically, a representing measure $\widetilde{\Phi}$ for a character $\Phi$ of a uniform algebra $A$ is said to be 
a {\em Jensen measure} if it satisfies {\em Jensen's inequality} on $A$, 
namely $|\Phi(a)|\leq \Delta_{\widetilde{\Phi}}(a)$ for all $a\in A$. Jensen measures are an important feature  
in the classical theory of uniform algebras, 
 being the dual  notion to {\em subharmonicity}. 
The main result 
in Section \ref{jensen}   is a partial  noncommutative 
Jensen inequality appropriate  for $C^*$-algebras with a tracial state. The key tools in this development are Brown's measure \cite{Brown,HS}, and a potential theoretic balayage argument.  
We also need here and in some other sections a reduction to the finite von Neumann algebra case using a method 
which we develop later in Section 1.   
Section \ref{GW} presents some $D$-valued variants of the classical Gleason-Whitney theorem, which concerns existence  and uniqueness of weak* continuous 
representing measures (in their case the algebra is $H^\infty(\Ddb)$).   

In the classical case the existence of a Jensen measure actually  characterizes the characters among the linear functionals 
on a uniform algebra.  This is due to Bishop and Ito-Schreiber
(see e.g.\ \cite{Sid}).  In  Section \ref{BIS} we present noncommutative variants of this result, for example that
a special case of the 
Jensen inequality characterizes $D$-characters among
the $D$-module maps. 
One complication that occurs in the noncommutative case here
is taken care of by a new result concerning the question of when in a certain setting a Jordan homomorphism is an algebra homomorphism.

An important classical feature of uniform algebras is that the `Gleason relation' 
$\| \varphi - \psi \| < 2$ turns out to be equivalent to boundedness of 
the hyperbolic distance between values of $\varphi$ and $\psi$ on the ball.  This
is an equivalence relation on the set of characters (resp.\ the set of representing measures of characters).
Hence they may be partitioned into equivalence classes, called \emph{(Gleason) parts}.    
The Gleason relation  does not seem to have a $B(H)$-valued analogue suitable for our 
  purposes, but we will show in Section \ref{gp} that interestingly  it does have 
a noncommutative variant for our $D$-characters.   Thus we initiate a theory of noncommutative Gleason parts, appropriate for $D$-characters, which uses Harris' noncommutative hyperbolic metric and 
Schwarz-Pick inequality \cite{HarHol} and other ingredients.   
Two $D$-characters are  Gleason equivalent if they are Harnack equivalent in the sense of \cite{SV}, and we prove the
converse in the scalar case.  

Finally in Section \ref{hankel} we present a surprising and important application of 
the above new theory of Gleason parts. We pause to justify the 
significance of the results in this section. The 
study of  Toeplitz operators on $H^2$-spaces of maximal subdiagonal subalgebras is receiving renewed interest at present
(see for example \cite{Bek, Pru}. In the classical setting of $H^2(\mathbb{D})$, the index theory of Fredholm Toeplitz operators is a particularly elegant and deep part of this theory. (See for example the classical result of Gohberg and Krein \cite{GK}, which connects the index of  Fredholm Toeplitz operator $T_f$, with the winding number of the symbol $f$. More modern developments on this topic may be found in \cite{Mur, Mir} for example.) If both Hankel $\mathcal{H}_f$ and Toeplitz maps $T_f$ corresponding to some symbol $f\in L^\infty$ are regarded as maps from $H^2$ into $L^2$, then $T_f= I -\mathcal{H}_f$. The map $T_f$ will therefore clearly be semi-Fredholm whenever $\mathcal{H}_f$ is compact. When attempting to generalize this theory to more general contexts, possibilities arise that do not appear in the classical setting. Note for example that 
there are group algebraic contexts which admit no non-trivial compact Hankel matrices whatsoever! (See the discussion preceding Definition 10 in \cite{Exel}.) Hence the identification of criteria which guarantee the existence of compact Hankel maps (and, by 
extension, of semi-Fredholm Toeplitz maps) forms an important part of the generalized theory. It is precisely on this matter that the theory of Gleason parts proves to be crucial. Specifically we are able to provide sufficient conditions formulated in terms of Gleason 
parts for the existence of a so-called Wermer embedding function in the noncommutative context. Building on ideas of \cite{CMNX}, one may show that the existence of such a function in turn guarantees the existence of non-trivial compact Hankel operators. In the 
commutative context this relation is very precise (see \cite[Theorem 2]{CMNX}.)

We now describe the setting for most of our paper.
Throughout $A$ is a unital operator algebra,  a unital subalgebra of a $C^*$-algebra or von Neumann algebra $B$. 
The difference between the function  algebra and uniform  algebra notions defined at the start of our paper is that the latter generates
the containing $C^*$-algebra $C(K)$.  This follows from the Stone-Weierstrass theorem.
In this paper however we shall not  require that
$A$ generates $B$ as a $C^*$-algebra.  Thus the difference between function  algebra and uniform  algebras
will not play a role in our paper we use the two terms somewhat interchangeably.

As we said, we generally replace characters (classical function algebra case) by the $D$-characters
above.   These are the contractive
homomorphisms $\Phi : A \to D$, where $D$ is a $C^*$-subalgebra of $A$
containing $\I_A$, which restrict to the identity map on $D$.
Any such map is a $D$-module map and is completely contractive.     That is,
$\| \Phi \|_{\rm cb} = \| \Phi \|$ = 1.
This may be proved by a standard trick.
Namely, we recall that if $[x_{ij}] \in M_n(D)$ then $\| [x_{ij} ] \|_n = \sup \{ \| \sum_{i,j} s_i^* x_{ij} r_j \| \}$,
the supremum taken over $s_i, r_i \in D$ with $\sum_i \, r_i^* r_i$ and $\sum_i \, s_i^* s_i$ both contractive.
Indeed this is clear for example by viewing $M_n(D)$ as the adjointable maps on the (right) $C^*$-module sum
of $n$ copies of $D$.   Hence  if $[x_{ij}] \in {\rm Ball}(M_n(A))$, then
$\| [\Phi(x_{ij}) ] \|$ is dominated by the supremum over such $r_i, s_i$ of
$$\| \sum_{i,j} s_i^* \Phi(x_{ij}) r_j \| = \| \Phi(x)\| \leq \|  \Phi \|,$$
where $x = \sum_{i,j} s_i^* x_{ij} r_j$, which is a contraction in $A$.  Hence
$\| \Phi \|_{\rm cb} = \| \Phi  \|$.
Conversely, we recall
that any completely contractive unital idempotent map $\Phi$ on $A$ with range
$D$ is a $D$-module map \cite[Proposition 5.1]{BLI}.

The following facts and notation will be used in many places in our paper to
reduce to the finite von Neumann algebra case.
If $\tau$ is a normal (that is, weak* continuous)
tracial state  on a von Neumann algebra $M$ which is not faithful, then the left kernel of $\tau$
equals the right kernel.  This is therefore
a weak* closed ideal $J$ of $M$ contained in Ker$(\tau)$.
There is therefore a central projection
$z \in M$ with $J = z^\perp M$.
The quotient of $M$ by $J$ may also be identified with $N = z M$.
Then $\tau' = \tau_{|N}$ is a faithful normal tracial state  on the von
Neumann algebra $N$.   We will often simply write $\tau$ for $\tau'$.   Since $z$ is the support of $\tau$, we have that $\tau(z) = 1$
and $\tau(x) = \tau(zx)$ for all $x \in M.$
The following is no doubt known to a few experts, but we were unable to find it in the literature.

\begin{lemma}  \label{exucexp} If $D$ is a von Neumann subalgebra of a von Neumann algebra $M$, and if $\tau$ is a normal tracial state on $M$ which is faithful on $D$ then there is a unique normal conditional expectation $\Psi : M \to D$ that is $\tau$-preserving (i.e., $\tau \circ \Psi =
 \tau$).        \end{lemma}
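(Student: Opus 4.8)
The plan is to reduce to the case where $\tau$ is faithful on all of $M$, invoke the classical existence and uniqueness of the trace-preserving normal conditional expectation in that setting, and transport the result back. Keep the notation of the paragraph preceding the lemma: let $z \in M$ be the central support projection of $\tau$, so that $N = zM$ is a von Neumann algebra on which $\tau' = \tau_{|N}$ is a faithful normal tracial state, $\tau(z) = 1$, and $\tau(x) = \tau(zx)$ for all $x \in M$. The compression $\theta : D \to N$, $\theta(d) = zd$, is a unital normal $*$-homomorphism, and it is injective: if $zd^*d = 0$ then $\tau(d^*d) = \tau(zd^*d) = 0$, whence $d = 0$ since $\tau$ is faithful on $D$. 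An injective normal $*$-homomorphism is isometric with $\sigma$-weakly closed range, so $\theta$ identifies $D$ with a von Neumann subalgebra $zD$ of $N$, and $\tau' \circ \theta = \tau_{|D}$.

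\emph{Existence.} Since $\tau'$ is a faithful normal trace on $N$, a standard fact yields a $\tau'$-preserving normal conditional expectation $E : N \to zD$ (concretely, $E$ is the restriction to $N$ of the orthogonal projection of $L^2(N,\tau')$ onto $L^2(zD,\tau')$, once one checks this restriction maps $N$ into $zD$). Put $\Psi := \theta^{-1} \circ E \circ \kappa$, where $\kappa : M \to N$ is $x \mapsto zx$. Then $\Psi : M \to D$ is normal, contractive and unital, and it is idempotent with range exactly $D$: for $d \in D$ one has $\kappa(d) = zd \in zD$, hence $\Psi(d) = \theta^{-1}(zd) = d$. By Tomiyama's theorem \cite{tom}, a unital norm-one projection onto a $C^*$-subalgebra is a conditional expectation, so $\Psi$ is one; and it is $\tau$-preserving because $\tau(\Psi(x)) = \tau'(E(zx)) = \tau'(zx) = \tau(zx) = \tau(x)$.

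\emph{Uniqueness.} Let $\Psi : M \to D$ be any $\tau$-preserving normal conditional expectation. From $\tau(\Psi(z^\perp)) = \tau(z^\perp) = 0$, $\Psi(z^\perp) \geq 0$, and faithfulness of $\tau$ on $D$ we get $\Psi(z^\perp) = 0$. For any state $\varphi$ on $D$, the state $\omega = \varphi \circ \Psi$ on $M$ has $\omega(z^\perp) = 0$, so the Cauchy--Schwarz inequality gives $|\omega(z^\perp x)|^2 \leq \omega(z^\perp)\,\omega(x^*x) = 0$ and likewise $\omega(xz^\perp) = 0$, for all $x \in M$; as states separate the points of $D$ this forces $\Psi(z^\perp x) = \Psi(xz^\perp) = 0$, i.e.\ $\Psi(x) = \Psi(zx)$ for every $x$. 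Consequently $E' := \theta \circ \Psi_{|N} : N \to zD$ is a normal norm-one projection onto $zD$ (it fixes $zD$, since $\Psi(zd) = \Psi(d) = d$ for $d \in D$), hence a conditional expectation, and it is $\tau'$-preserving; by uniqueness in the faithful case $E' = E$, and therefore $\Psi(x) = \theta^{-1}(E'(zx)) = \theta^{-1}(E(zx))$ coincides with the map built above.

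The one genuinely delicate point is the non-faithfulness of $\tau$ on $M$: the argument must pass to the `faithful part' $N = zM$, verify that $D$ embeds there as an honest von Neumann subalgebra via $\theta$, and, in the uniqueness step, show that a trace-preserving normal conditional expectation automatically annihilates the $z^\perp$-corner of $M$ --- which is exactly where the Cauchy--Schwarz (Kadison--Schwarz) estimate is needed. Once this reduction is in place, everything follows from the standard theory of conditional expectations associated with a faithful normal trace.
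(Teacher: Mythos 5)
Your proof is correct and follows essentially the same route as the paper: compress by the support projection $z$ of $\tau$, use faithfulness of $\tau$ on $D$ to see that $d \mapsto zd$ embeds $D$ as the von Neumann subalgebra $zD$ of $zM$, transport the unique $\tau'$-preserving normal conditional expectation $zM \to zD$ back to a map $M \to D$, and reduce uniqueness to the faithful case. Your self-contained uniqueness step (deriving $\Psi(x)=\Psi(zx)$ from positivity and Cauchy--Schwarz) replaces the paper's citation of an external lemma, but is in the same spirit as the alternative argument the authors sketch in the remark following the lemma.
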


\begin{proof} Let $z$ be the projection above.
If $\tau$ is faithful on $D$ then multiplication by $z$ on $D$ is a faithful $*$-homomorphism onto $Dz$,
since $dz = 0$ implies $\tau(d^* dz) = \tau(d^*d) = 0$, and $d = 0$.

Note that $Dz$ is a von Neumann subalgebra of $Mz$, so that there is
a unique $\tau$-preserving normal conditional expectation $\rho : Mz \to Dz$.   Define $\Psi : M \to D$ by $\Psi(m) z = \rho(mz)$ for $m \in M$.
This is well defined by the fact noted in the last paragraph.   Hence $\Psi$ is linear, indeed it is
a $D$-bimodule map (since e.g.\ $\rho(dmz) = dz \rho(mz) = d \rho(mz)$). 
Since $$\tau(\Psi(m)) = \tau(\Psi(m)z) =\tau(\rho(mz)) =\tau(mz) = \tau(b),$$
 $\Psi$ is $\tau$-preserving.  Also $\Psi(\I) = 1$.  
Now $$\tau(\rho(az) ) = \tau(\Phi(a)) = \tau(a) = \tau(az),$$ and so $\rho$ is $\tau$-preserving. 

 To see that $\Psi$ is positive let $m \in M_+$. Then
 $\rho(mz) \geq 0$.   If $d \in D_+$ then 
 $$\tau(d \Psi(m)) = \tau(d \Psi(m)z) = \tau(d \rho(mz)) \geq 0 .$$
 So $\Psi$ is positive.  
 If $\rho$ is normal 
 then so is $\Psi$ since given a bounded net $m_t \to m$ weak* in $M$ we have that
 $$\Psi(m_t) z = \rho(m_tz) \to \rho(mz) = \Psi(m) z .$$
Using the easily verifiable fact that in the present setting $L^1(D,\tau)\subset L^1(M,\tau)$, it is clear that $$\tau(d \Psi(m_t)) = \tau(d \Psi(m_t)z)
 \to \tau(d \Psi(m)z) = \tau(d \Psi(m)), \qquad 
 d \in L^1(D,\tau) .
 $$
 Thus $\Psi(m_t) \to \Psi(m)$ weak* in $D$.

The uniqueness follows from the proof of \cite[Lemma 5.3]{BLI}.
\end{proof}

{\bf Remark.}   We give an alternative argument for part of the last proof, that has some ideas that may be useful elsewhere.  Let $\Psi' : M \to D$ be another $\tau$-preserving normal conditional expectation $M \to D$ extending $\Phi$.
Then $\rho' : mz \mapsto \Psi'(m) z$ maps $Mz \to Dz$.   If $m z = m'z$ then 
$$\tau(d \Psi'(m-m')) = \tau(d(m-m')) = \tau(d(m-m')z) = 0 , \qquad d \in D.$$
Thus $\Psi'(m-m') z = 0$, so that $\rho'$ is well defined.   Now $$\tau(\rho(mz) ) = \tau(\Psi'(m)) = \tau(m)
= \tau(mz),$$
so $\rho'$ is $\tau$-preserving.  It is clearly a $Dz$-bimodule map onto $Dz$, and idempotent.  Finally, $\rho'$ is positive by a slight variant of the argument above that $\Psi$ is positive:
if $mz \geq 0$ then
 $$\tau(d \Psi'(m)z) = \tau(\Psi'(dm)) = \tau(dm)
 = \tau(dz  \rho(mz)) \geq 0 , \qquad d \in D_+.$$
Thus $\rho' = \rho$ by the uniqueness of  $\tau$-preserving normal conditional expectations
on a finite von Neumann algebra.
So $\Psi'(m) z = \Psi'(m) z$ and
$\Psi'(m) = \Psi'(m)$.   Thus $\Psi' = \Psi$.

\medskip

We now move towards defining a determinant for a tracial state  which is not faithful.

\begin{lemma} \label{zcalc} If $x$ is a positive invertible element in a unital
$C^*$-algebra $B$, and if $z$ is a central projection in $B$, then for any continuous 
 function $f$ on $\sigma(x)$ we have that $f(xz)=f(x)z$, where $f(xz)$ is  computed in 
$Bz$. In particular, 
 if $p > 0$ then 
 $\ln (zx) = z \ln x$ and $(xz)^p = x^p z$.   Here $\ln(zx)$ and $(xz)^p$ are computed in 
$Bz$. 
\end{lemma}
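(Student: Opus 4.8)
The plan is to reduce to polynomials and then pass to a norm limit, exploiting that $Bz$ is itself a unital $C^*$-algebra with identity $z$ (since $z$ is a central projection; when $z=0$ everything is trivially zero). First I would record the elementary facts needed. The element $xz$ is positive in $Bz$, and it is invertible there with inverse $x^{-1}z$. More generally, if $\lambda \notin \sigma_B(x)$ then $x - \lambda \I$ is invertible in $B$, so $xz - \lambda z = (x-\lambda\I)z$ is invertible in $Bz$ with inverse $(x-\lambda\I)^{-1}z$; hence $\sigma_{Bz}(xz) \subseteq \sigma_B(x) = \sigma(x)$. Consequently a continuous function $f$ on $\sigma(x)$ restricts to a continuous function on $\sigma_{Bz}(xz)$, so that $f(xz)$, computed by the continuous functional calculus inside $Bz$, is well defined.

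Next I would check the identity for polynomials. Since $z$ is central and $z^2 = z$, a one-line induction gives $(xz)^k = x^k z$ for all $k \geq 1$. Hence for $p(t) = \sum_{k=0}^n a_k t^k$, evaluating at $xz$ inside $Bz$ (whose unit is $z$) yields
$$p(xz) = a_0 z + \sum_{k=1}^n a_k (xz)^k = a_0 z + \sum_{k=1}^n a_k x^k z = \Bigl( a_0 \I + \sum_{k=1}^n a_k x^k \Bigr) z = p(x)z .$$

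Then comes the density step. The maps $f \mapsto f(xz) \in Bz$ and $f \mapsto f(x)z \in B$ are both contractive for the uniform norm on $\sigma(x)$: indeed $\|f(xz)\|_{Bz} = \sup_{\sigma_{Bz}(xz)}\abs{f} \le \|f\|_{C(\sigma(x))}$ by the spectral inclusion above, while $\|f(x)z\| \le \|f(x)\| = \|f\|_{C(\sigma(x))}$. By Stone--Weierstrass the polynomials are dense in $C(\sigma(x))$, and the two maps agree on polynomials by the previous step, so they agree on all of $C(\sigma(x))$; this is precisely $f(xz) = f(x)z$.

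Finally, for the stated special cases one notes that $x$ positive and invertible forces $\sigma(x)$ to be a compact subset of $(0,\infty)$, on which $t \mapsto \ln t$ and $t \mapsto t^p$ (for $p>0$) are continuous, so the general identity applies; and since $z$ is central, $zx = xz$, giving $\ln(zx) = (\ln x)z = z\ln x$ and $(xz)^p = x^p z$. I do not expect a real obstacle here; the only thing requiring care is keeping straight in which $C^*$-algebra each functional calculus is taken, and that is settled once the inclusion $\sigma_{Bz}(xz) \subseteq \sigma(x)$ is established.
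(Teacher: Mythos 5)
Your proof is correct and follows essentially the same route as the paper: the paper observes that $b \mapsto bz$ is a surjective unital $*$-homomorphism $\pi : B \to Bz$, notes $\sigma_{Bz}(xz) \subseteq \sigma_B(x)$, and then cites the standard fact that continuous functional calculus commutes with unital $*$-homomorphisms, i.e.\ $f(\pi(x)) = \pi(f(x))$. You simply prove that standard fact in-line by the usual polynomial-plus-density argument, which is fine but not a different method.
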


\begin{proof}   Let $\pi : B \to Bz$ be multiplication by $z$, a surjective unital $*$-homomorphism. 
We have $\sigma_{Bz}(zx) \subset \sigma_B(x)$, so that $f(xz)$ makes sense in $Bz$. 
By a well known property of the functional calculus $f(zx) = f(\pi(x)) =  \pi(f(x)) = f(x)z$. \end{proof}

For $M, \tau, N, z, \tau' = \tau_{|N}$ as above Lemma \ref{exucexp}, let $\Delta = \Delta_{\tau'}$ be the 
Fuglede-Kadison determinant for $N$.  A Brown measure and determinant $\Delta_\tau$ 
on $M$ with respect to $\tau$ may be defined via $\tau'$ and the associated  determinant on $N$: $\Delta_\tau(a) = \Delta(z a)$ for $a \in M$.  
This will have most of the usual properties of the determinant (see \cite{FK,Brown,Arv, HS,BLsurv}), as we prove below.  In fact many of the properties below 
in the von Neumann algebra case are already noted in \cite{Arv} (although the determinant is defined there by  
the formula involving $\exp, \tau,$ and $\ln$).

 If  $\tau$ is a tracial state on a $C^*$-algebra $B$,  then $\tau^{**}$ is a
normal tracial state on $M = B^{**}$.   As we did above Lemma \ref{exucexp}, we may take the quotient by the 
left kernel $J = B^{**} z^{\perp}$, to get a faithful normal tracial state   $\tau'$ on $N = B^{**} z$.   Then 
$N = B^{**} z$ has a Kadison-Fuglede determinant $\Delta$, as above, and we may define a  determinant on $B$ by $\Delta_\tau(x) = \Delta(zx)$, for $x \in B$.    This again seems to have almost all the usual properties of the determinant: 

\begin{lemma} \label{FK} If $\tau$ is a normal tracial state on a von Neumann algebra $M$
(resp.\ is  a  tracial state on a $C^*$-algebra $B$), and if
$\Delta_\tau$ is the determinant defined above, then:
\begin{itemize} \item $\Delta_\tau(a) \leq \tau(|a|).$
	\item $\Delta_\tau(a b) = \Delta_\tau(a) \Delta_\tau(b)$.
	\item $\Delta_\tau(\lambda u) = |\lambda|$ for any unitary in $M$ (resp.\ $B$) 
	and $\lambda \in \Cdb$.
	\item $\Delta_\tau (\exp(a)) = |\exp ( \tau(a))|$.
	\item $\Delta_\tau(a^*) =  \Delta_\tau(a) = \Delta_\tau(|a|)$.
	\item 
$\Delta_\tau(a) = \exp (\tau(\ln |a|))$ if $a$ is invertible in $M$ (resp.\ $B$), otherwise 
$\Delta_\tau(a) = \lim_{\epsilon \to 0^+} \, \exp \tau (\ln (|a| + \epsilon \I))$ (in the $B$ case we assume that $B$ is unital here).
\item If $0 \leq a \leq b$  
 then $\Delta_\tau(a) \leq \Delta_\tau( b)$, and $\Delta_\tau(a^p) = \Delta_\tau(a)^p$ if $p > 0$.
\item $\Delta_\tau$ is upper-semicontinuous in the norm topology.
\item If $a$ is invertible in $M$  (resp.\ $B$) 
then $\Delta_\tau(a) = \lim_{\epsilon \to 0^+} \, \tau(|a|^\epsilon )^{\frac{\I}{\epsilon}}.$ 
\end{itemize}
Here $a, b \in M$  (resp.\ $B$).
\end{lemma}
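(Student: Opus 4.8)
The plan is to reduce every assertion to the corresponding classical fact about the Fuglede--Kadison determinant $\Delta = \Delta_{\tau'}$ of the finite von Neumann algebra $N = zM$ (resp.\ $N = B^{**}z$) with its faithful normal tracial state $\tau'$, exploiting the single structural observation that $\Delta_\tau = \Delta \circ \pi$, where $\pi : x \mapsto zx$ is a surjective unital $*$-homomorphism onto $N$. First I would dispose of the $C^*$-algebra case: for a tracial state $\tau$ on a unital $B$, the modulus $|a|$, the continuous and holomorphic functional calculi, the order, the norm topology, and invertibility all behave identically whether computed in $B$ or in its enveloping von Neumann algebra $B^{**}$, and $\tau^{**}$ is a normal tracial state on $B^{**}$ with support $z$; hence each bullet for $B$ is just the bullet for $M = B^{**}$ read on elements of $B \subset B^{**}$. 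So it suffices to treat a normal tracial state $\tau$ on a von Neumann algebra $M$.

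For that case the relevant classical properties of $\Delta$ on $N$ (multiplicativity; $\Delta$ of a unitary is $1$; $\Delta(\exp y) = |\exp \tau'(y)|$; $\Delta(y^*) = \Delta(y) = \Delta(|y|)$; the $\exp \tau' \ln$ formula with its $\epsilon$-approximation for non-invertible $y$; monotonicity on positives and $\Delta(y^p) = \Delta(y)^p$; norm upper semicontinuity; and the domination $\Delta(y) \leq \tau'(|y|)$) are all available from \cite{FK,Arv,HS}. The computational bridge consists of: $\pi$ commutes with $|\cdot|$, with $\exp$, and with continuous functional calculus, so $|za| = z|a|$, $\exp(za) = z\exp(a)$, and $(za)^p = za^p$ for $a \geq 0$ and $p>0$; Lemma \ref{zcalc} gives $\ln(z|a|) = z\ln|a|$ when $a$ is invertible and $\ln(z(|a| + \epsilon\I)) = z\ln(|a| + \epsilon\I)$ in general; $z$ being a central idempotent gives $zab = (za)(zb)$; $z$ being the support of $\tau$ gives $\tau'(zx) = \tau(zx) = \tau(x)$ for $x \in M$; and $0 \leq a \leq b$ in $M$ forces $0 \leq za \leq zb$ in $N$ since $z(b-a) = z^*(b-a)z \geq 0$.

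Granting these, every item is a one-line transfer. For instance $\Delta_\tau(ab) = \Delta((za)(zb)) = \Delta(za)\Delta(zb) = \Delta_\tau(a)\Delta_\tau(b)$; $\Delta_\tau(a) = \Delta(za) \leq \tau'(|za|) = \tau'(z|a|) = \tau(|a|)$; the unitary, $\exp$, and the $*$-and-modulus identities come from the matching identities for $\Delta$ on $N$ plus the bridge (and for the unitary one also $\Delta(\lambda\I_N)=|\lambda|$); the $\exp\tau\ln$ formula in the invertible case is $\Delta(za) = \exp\tau'(\ln(z|a|)) = \exp\tau'(z\ln|a|) = \exp\tau(\ln|a|)$, and the non-invertible case is the same with $|a| + \epsilon\I$ inserted and the limit taken via the classical $\epsilon$-approximation for $\Delta$; monotonicity and $\Delta_\tau(a^p) = \Delta_\tau(a)^p$ follow from $0 \leq za \leq zb$ and $(za)^p = za^p$; upper semicontinuity follows because $x \mapsto zx$ is norm-continuous and $\Delta$ is norm upper-semicontinuous on $N$; and the last bullet follows by writing $|za|^\epsilon = z|a|^\epsilon$ and using $\tau'(|za|^\epsilon) = 1 + \epsilon\,\tau'(\ln|za|) + o(\epsilon)$ (valid since $|za|$ is invertible in $N$, so $|za|^\epsilon \to z = \I_N$), whence $\tau'(|za|^\epsilon)^{1/\epsilon} \to \exp\tau'(\ln|za|) = \Delta(za)$, i.e.\ the invertible case already proved.

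The hard part will not be any single computation but making sure the classical inputs for $\Delta_{\tau'}$ are quoted in exactly the generality needed --- a faithful normal tracial state on a possibly non-$\sigma$-finite finite von Neumann algebra --- in particular the formula $\Delta_{\tau'}(y) = \lim_{\epsilon\to 0^+}\exp\tau'(\ln(|y| + \epsilon\I_N))$ for non-invertible $y$ and the norm upper semicontinuity, together with verifying in the $C^*$-algebra reduction that the support projection and all the functional-calculus identities are insensitive to whether one works in $B$ or $B^{**}$. These facts are standard but deserve an explicit citation and a sentence of justification; once they are in place, no genuine difficulty remains.
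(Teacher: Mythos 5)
Your proposal is correct and follows essentially the same route as the paper: both reduce each item to the corresponding classical property of the Fuglede--Kadison determinant on $N = zM$ (resp.\ $B^{**}z$) via the unital $*$-homomorphism $x \mapsto zx$, using Lemma \ref{zcalc} for the functional-calculus identities, the relation $\tau(x) = \tau(zx)$, and $\tau^{**}$ for the $C^*$-algebra case. The only difference is cosmetic: you spell out the last bullet (the $\tau(|a|^\epsilon)^{1/\epsilon}$ limit) in more detail than the paper, which simply cites the analogous classical property.
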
   
\begin{proof} 
The ideas for the proofs of nearly all of these occur in the proof of the first, namely
$$\Delta_\tau(a) = \Delta(az) \leq \tau(|az|)
= \tau(|a|z) = 
\tau(|a|),$$ 
where for the inequality, we used the analogous property for the usual Fuglede-Kadison determinant.
In the case for $B$ one should replace $\tau$ by $\tau^{**}$, and note that $\tau^{**}(|a|) = \tau(|a|)$.  
 Similarly $\Delta_\tau$ has the multiplicativity property of the determinant since $\Delta$ does, in the case for $B$ this reads
$$\Delta_\tau(ab) = \Delta(abz) =  \Delta(az)  \Delta(bz) = \Delta_\tau(a) \Delta_\tau(b) .$$
  Clearly $\Delta_\tau(\I) = \Delta(z) = \Delta(\I_N) = 1$.   Thus if $u$ is a unitary 
we have $\Delta_\tau(u) = \Delta(zu) = 1$ since $zu$ is a unitary in $N$.     That $\Delta_\tau(\lambda u) = |\lambda|$ is now easy.
If $a$ is invertible in $M$ then 
$$\Delta_\tau(a) = \Delta(za) = 
\exp (\tau(\ln |za|)) = \exp (\tau(z \, \ln |a|)) = \exp (\tau(\ln |a|)).$$ 
For general $a \in M$ we have $$\exp \tau (\ln (|a| + \epsilon)) = \exp \tau'( z (\ln (|a| + \epsilon))) = \exp \tau' ( \ln (| za | + \epsilon \I_N)) ,  \qquad a \in M.$$   
Letting $\epsilon \to 0$ we see that $$\Delta_\tau(a)  = \Delta(za) = \lim_{\epsilon \to 0^+} \, \exp \tau (\ln (|za| + \epsilon z)) = 
\lim_{\epsilon \to 0^+} \, \exp \tau (\ln (|a| + \epsilon)).$$  
In the case of $x \in B$, we may use Lemma \ref{zcalc} to see that if $x\in B$ is invertible, then we then have 
$$\Delta_\tau(x) = \Delta(x z) = \exp(\tau^{**}(\ln |\hat{x} z|))= \exp(\tau^{**}(z \,  \widehat{\ln \, |x|})) = \exp(\tau(\ln |x|)).$$ 
Here we used the fact that  $\ln (\hat{|x|}) = \widehat{\ln \, |x|}$, which follows by the last line of the proof of  Lemma \ref{zcalc}
with $\pi$ replaced by the canonical  $*$-homomorphism $\hat{\cdot} \,  : B \to B^{**}$.  
For arbitrary $x \in B$ we have by the above that
$$\Delta_\tau(x) = \Delta(x z) = \lim_{\epsilon \to 0^+} \,  \Delta_\tau(|x| + \epsilon)) = \lim_{\epsilon \to 0^+} \, \exp \tau (\ln (|x| + \epsilon \I)). $$
The remaining assertions follow by similar considerations.  For example, 
the statements involving $a^p, |a|^\epsilon,$ and $\exp (a)$ follow similarly from the analogous properties for the usual Fuglede-Kadison determinant 
(see e.g.\  \cite{FK,Brown,Arv, HS,BLsurv}) and the
relations $(za)^p = z a^p$ and 
$z \exp (a) = \exp(az)$ from Lemma \ref{zcalc}, the latter exponential computed in $Mz$. 
\end{proof}

\section{The Jensen inequality} \label{jensen}

In \cite[Proposition 4.4.4]{Arv} Arveson showed that for his algebras the Jensen
inequality $$\Delta(\Phi(f)) \leq \Delta(f), \qquad f \in A,$$
was equivalent to the statement $\Delta(\I + x) \geq 1$ if $x \in A$ with $\Phi(x) =
0$. Clearly the Jensen inequality implies the last statement, since $\Phi(\I+x) = 1$.  
It also implies  the {\em Jensen
equality} $\Delta(\Phi(f)) = \Delta(f)$ for  elements $f$ which are invertible in $A.$ 
Taking our cue from these facts, we introduce the following notions.
We say that $\Phi$ satisfies the {\em ball-Jensen inequality}  if 
$$\Delta(\I + x) \geq 1, \qquad
\text{for all} \; x \in A \; \; \text{such that} \; \Phi(x) = 0 \, \; \text{and} \;  r(x) \leq 1.$$ 
Here $r(x)$ is the spectral radius.    
Clearly the Jensen inequality implies the ball-Jensen inequality.   
We will prove that in our algebras below we always have 
the {\em ball-Jensen equality}, namely: 
$$\Delta(\I + x) = 1, \qquad
\text{for all} \; x \in A \; \; \text{such that} \; \Phi(x) = 0 \, \; \text{and} \;  r(x) \leq 1.$$  

  Recall that a map $\Phi$ on $A$ is $\tau$-preserving if $\tau \circ \Phi =
 \tau_{|A}$.

\begin{theorem} \label{baly}  Let $M$ be a $C^*$-algebra with a tracial state $\tau$. 
  \begin{itemize}
\item [{\rm (1)}]  If $x \in M$ with $r(x) \leq 1$ and $\tau(x^m) = 0$ for all $m \in \Ndb$  then $\Delta_\tau(\I + x) = 1$. 
Equivalently, if $a \in M$ with  $r( \I - a ) \leq 1$ 
and $\tau(a^m) = 1$ for all $m \in \Ndb$ 
then $\Delta_\tau(a) = 1$.   
\item [{\rm (2)}]   Let $\Phi : A \to D$ be a $\tau$-preserving  unital homomorphism from a subalgebra $A$ of $M$
 onto a  $C^*$-subalgebra
$D$ of $A$, such that $\Phi$ is the identity map on $D$.  Then  $A$ satisfies the ball-Jensen equality
$\Delta_\tau (\I + x) \geq 1$ for $x \in A$ such that $\Phi(x) = 0$ and $r(x) \leq 1.$ Also, if
$x \in A$ with 
 $\Phi(x)$ invertible in $D$, and if $\|  \Phi(x)^{-1} x - \I \| \leq 1$, then  $\Delta_\tau (\Phi(x))   = \Delta_\tau (x)$.
\end{itemize} 
If $\Phi : A \to D$ is as  in {\rm (2)} and $\tau$ is faithful  
 then we have $D = A \cap A^*$.   Such $\Phi$ as in {\rm (2)} is unique if $\tau$ is faithful.
 Indeed if  $M$ is a von Neumann algebra, $\tau$ is faithful,  and $D$ is weak* closed, then $\Phi$
is the restriction to $A$ of the canonical $\tau$-preserving  normal  faithful conditional expectation of
$M$ onto $D$. \end{theorem}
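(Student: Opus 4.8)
The plan is to make part (1) the engine and obtain everything else from it. First I would carry out the standard reduction to a finite von Neumann algebra: replace $M$ by $M^{**}$ with the normal trace $\tau^{**}$, pass to $N = M^{**}z$ (the quotient by the left kernel) with the faithful normal trace $\tau' = \tau^{**}|_N$, so that $\Delta_\tau$ is recovered from the ordinary Fuglede--Kadison determinant $\Delta_{\tau'}$ of $N$ via $\Delta_\tau(a) = \Delta_{\tau'}(za)$. The hypotheses transfer because $y \mapsto zy$ is a unital $*$-homomorphism: $r(zx) \le r(x) \le 1$ and $\tau'((zx)^m) = \tau^{**}(x^m) = \tau(x^m)$. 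Thus we may assume $M$ is a finite von Neumann algebra with faithful normal trace $\tau$. The two formulations in (1) are equivalent by an elementary binomial/downward-induction argument ($\tau(a^m)=1$ for all $m$ iff $\tau((a-\I)^m)=0$ for all $m\ge1$) together with $r(\I-a)=r(a-\I)$, so it suffices to treat the ``$x$'' version.

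For the lower bound $\Delta_\tau(\I+x)\ge1$: when $r(x)<1$ the holomorphic functional calculus gives $\log(\I+x)=\sum_{n\ge1}\frac{(-1)^{n+1}}{n}x^n$ in norm, so by Lemma \ref{FK}, $\Delta_\tau(\I+x)=\Delta_\tau(\exp\log(\I+x))=|\exp(\sum_{n\ge1}\frac{(-1)^{n+1}}{n}\tau(x^n))|=|\exp 0|=1$. For general $r(x)\le1$, apply this to $tx$ with $0<t<1$ (still $\tau((tx)^m)=0$, now $r(tx)<1$) and let $t\to1^-$, using the norm upper-semicontinuity of $\Delta_\tau$ from Lemma \ref{FK}: $\Delta_\tau(\I+x)\ge\limsup_{t\to1^-}\Delta_\tau(\I+tx)=1$.

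The upper bound $\Delta_\tau(\I+x)\le1$ is the heart of the result, and the step I expect to be the main obstacle. Let $\mu=\mu_x$ be the Brown measure of $x$ for $\tau$ (\cite{Brown,HS}): a probability measure supported in $\overline{D(0,r(x))}\subseteq\overline{\Ddb}$, with $n$-th moment $\int z^n\,d\mu=\tau(x^n)$ (standard; compare the expansions at infinity of $\ln\Delta_\tau(\lambda\I-x)$ and of $\int\ln|\lambda-z|\,d\mu(z)$), hence $\int z^n\,d\mu=0$ for $n\ge1$; and $\Delta_\tau(p(x))=\exp\int\ln|p|\,d\mu$ for polynomials $p$ (factor into linear terms and use multiplicativity of $\Delta_\tau$), so $\Delta_\tau(\I+x)=\exp\int_{\overline{\Ddb}}\ln|1+z|\,d\mu(z)$. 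I would then sweep $\mu$ onto $\Tdb$: set $\hat\mu=\int_{\overline{\Ddb}}\omega_z\,d\mu(z)$, where $\omega_z$ is the Poisson (harmonic) measure of $\Ddb$ at $z\in\Ddb$ and $\omega_z=\delta_z$ for $z\in\Tdb$. Since $\int\zeta^n\,d\omega_z=z^n$ and $\int\bar\zeta^n\,d\omega_z=\bar z^n$ for $n\ge0$ (harmonic extensions), all the nonzero Fourier coefficients of $\hat\mu$ vanish, so $\hat\mu=\nu$, normalized arc length on $\Tdb$. Now for $0<\epsilon\le1$ put $u_\epsilon(z)=\max(\ln|1+z|,\ln\epsilon)$, continuous on $\overline{\Ddb}$ and subharmonic on $\Ddb$; the maximum principle (comparison with the Poisson integral of boundary values) gives $u_\epsilon(z)\le\int u_\epsilon\,d\omega_z$ on $\Ddb$, with equality on $\Tdb$, so integrating against $\mu$ yields $\int_{\overline{\Ddb}}u_\epsilon\,d\mu\le\int u_\epsilon\,d\hat\mu=\int_{\Tdb}u_\epsilon\,d\nu$. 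Letting $\epsilon\to0^+$, $u_\epsilon\downarrow\ln|1+z|$ with $u_\epsilon\le\ln2$, so monotone convergence gives $\int\ln|1+z|\,d\mu\le\int_{\Tdb}\ln|1+\zeta|\,d\nu=0$. With the lower bound this gives $\Delta_\tau(\I+x)=1$, proving (1).

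Part (2) is then quick: if $\Phi(x)=0$ and $r(x)\le1$ then $\Phi(x^m)=\Phi(x)^m=0$ and, $\Phi$ being $\tau$-preserving, $\tau(x^m)=\tau(\Phi(x^m))=0$, so (1) yields $\Delta_\tau(\I+x)=1$; and if $d:=\Phi(x)$ is invertible in $D$ with $\|d^{-1}x-\I\|\le1$, then $y:=d^{-1}x\in A$ has $\Phi(y-\I)=0$ and $r(y-\I)\le\|y-\I\|\le1$, so $\Delta_\tau(y)=1$ by (1) and, by multiplicativity of $\Delta_\tau$, $\Delta_\tau(x)=\Delta_\tau(dy)=\Delta_\tau(d)=\Delta_\tau(\Phi(x))$. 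For the concluding assertions, suppose $\tau$ is faithful. Then $D=D^*\subseteq A\cap A^*$, while conversely $\Phi|_{A\cap A^*}$ is a unital contractive homomorphism of $C^*$-algebras (recall $\Phi$ is completely contractive), hence a $*$-homomorphism (unitaries map to unitaries), whose kernel is a closed two-sided $*$-ideal $I$; for $b\in I$, $\tau(b)=\tau(\Phi(b))=0$, so $I=\{0\}$ by faithfulness, and since $\Phi$ is idempotent onto $D\subseteq A\cap A^*$ this forces $A\cap A^*=D$. For uniqueness, if $\Phi,\Psi:A\to D$ are $\tau$-preserving $D$-module maps (which any two maps as in (2) automatically are), then for $a\in A$, $d\in D$, $\tau(d\Phi(a))=\tau(\Phi(da))=\tau(da)=\tau(d\Psi(a))$; taking $d=(\Phi(a)-\Psi(a))^*$ and using that $\tau$ is a faithful trace gives $\Phi=\Psi$. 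Finally, when $M$ is a von Neumann algebra, $\tau$ a faithful normal trace, and $D$ weak* closed, Lemma \ref{exucexp} gives the unique $\tau$-preserving normal faithful conditional expectation $E:M\to D$, and since $E|_A$ and $\Phi$ are both $\tau$-preserving $D$-module maps $A\to D$, the uniqueness just shown yields $\Phi=E|_A$.
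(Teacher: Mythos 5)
Your proof is correct and its core is the same as the paper's: reduce part (1) to a finite von Neumann algebra via $M^{**}$ and the support projection of $\tau^{**}$, then exploit the vanishing moments of the Brown measure of $x$ by sweeping it onto the unit circle. The execution differs in two ways worth noting. First, the paper cites the Saff--Totik balayage theorem, which gives \emph{equality} of the potentials of $\mu$ and of its sweep at the (regular) boundary point $-1$, so $\int\ln|1+y|\,d\mu=0$ in one stroke; your harmonic-measure version of the sweep, combined with subharmonicity of the truncations $u_\epsilon$, is more elementary and self-contained but only produces the inequality $\int\ln|1+z|\,d\mu\le 0$, which you correctly supplement with an independent lower bound $\Delta_\tau(\I+x)\ge 1$ coming from the norm-convergent series for $\log(\I+x)$ when $r(x)<1$ together with upper semicontinuity of $\Delta_\tau$ --- a step the paper's argument does not need. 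Second, for the tail assertions the paper uses Arveson's squaring trick $\tau((x-\Phi(x))^2)=0$ to get $D=A\cap A^*$ and cites \cite[Lemma 5.3]{BLI} for uniqueness, whereas you observe that $\Phi|_{A\cap A^*}$ is a unital contractive homomorphism of $C^*$-algebras, hence a $*$-homomorphism whose kernel is killed by faithfulness of $\tau$, and obtain uniqueness from the direct computation $\tau(d(\Phi(a)-\Psi(a)))=0$ for all $d\in D$. Both routes work; the one caveat is that your $*$-homomorphism step uses contractivity of $\Phi$, which belongs to the paper's standing $D$-character framework but is not literally listed among the hypotheses of (2), so you should either add that assumption explicitly or note that it is being imported from the surrounding setup.
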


 \begin{proof}  If  $\Phi$ is a $\tau$-preserving 
homomorphism and $\tau$ is faithful then  it follows by an argument of Arveson \cite{Arv} that $D = A \cap
A^*$:  Certainly $D \subset A \cap A^*$.
If $x \in (A \cap A^*)_{\rm sa}$ then
$\Phi((x - \Phi(x))^2) = (\Phi(x - \Phi(x)))^2 = 0$.   Applying $\tau$ we have
$\tau((x - \Phi(x))^2) =0$ so that $(x - \Phi(x))^2
= 0$.  Hence $x - \Phi(x) =0$, so $x = \Phi(x) \in D$.    Thus $D = A \cap A^*$.
The uniqueness of $\Phi$ if  $\tau$ is faithful follows from  \cite[Lemma 5.3]{BLI}.  Because of this uniqueness, 
 if $M$ is a von Neumann algebra and $D$ is weak* closed then $\Phi$
is the restriction to $A$ of the canonical $\tau$-preserving faithful conditional expectation of
$M$ onto $D$ (namely the dual of the embedding 
of $L^1(D,\tau)$ in $L^1(M,\tau)$).  

 Next we prove (1) and (2)  in the case that $M$ is a von Neumann algebra with a  faithful normal tracial state $\tau$.   
 We consistently write $\Delta_\tau$ as $\Delta$ in this case.  For (1), 
let $\mu$ be the Brown measure of $x$.  By for example\ the last assertions of \cite[Theorem 3.13]{Brown} we first see that 
  $\int_{\Cdb} \, z^n \, d \mu(z) = 0$ for all $n \in \Ndb$,
and, second, 
that  $\Delta(1 + x) \geq 1$ if 
$\int_{\Cdb} \, \, \ln \, |1+z| \, d \mu(z) \geq 0$. 
Write $\mu =  \mu_0 + \mu_1$
 where $\mu_1 = \mu_{| \Tdb}$ (the part of $\mu$ supported on $\Tdb$),  and 
$\mu_0 = \mu_{| \Ddb}$ where $\Ddb$ is the open unit disk. The moments of $\mu$ are the sum of the moments of $\mu_0$ and the moments 
of $\mu_1$.
The balayage $\nu_0$ of $\mu_0$ onto $\Tdb$  (see  \cite[Theorem
II.4.1 or II.4.7]{SaffTotik}) is a positive measure on the circle whose moments
are by (c) in the cited theorem,the same as the moments of $\mu_0$.   (We are grateful to Brian Simanek 
for help with this balayage argument.)    Let $\nu =\nu_0 + \mu_1$.
So the moments of $\mu$ are the same as the moments of $\nu$.
So $\int_{\Cdb} \, z^n \, d \nu = \int_{\Tdb} \, z^n \, d \nu = 0$ for all positive integers $n$, and hence for all  integers by taking complex conjugates.
Hence $\nu$ is a multiple of arc length measure on $\Tdb$.  Indeed the $*$-moments determine
a measure.  

The potential of $\mu$, that is $-\int_{\Cdb} \,
\ln |z-y| \, d \mu(y)$, is the sum of the potentials of $\mu_0$ and $\mu_1$.   These potentials lie in $(-\infty, \infty]$.   
 By a fundamental fact about balayage from the last cited theorem from \cite{SaffTotik}, the potential of $\mu_0$ on $\Tdb$ equals
the potential of $\nu_0$ on $\Tdb$.  This theorem mentions {\em regular boundary points}, defined on p.\ 54 of that 
reference, which in the case we need, are all the points on $\Tdb$ (by e.g.\ the line after Theorem 4.6 on p.\ 54 of \cite{SaffTotik}). 
Thus the potential of $\mu$ on $\Tdb$ equals
the potential of $\nu$. That is, $-\int_{\Cdb} \, \ln |z-y| \, d \nu(y) = -\int_{\Cdb} \,
\ln |z-y| \, d \mu(y)$ for
$z$ on and outside $\Tdb$.  Setting $z = -1$ we have 
$$\int_{\Cdb} \, \ln |1+y|
\, d \mu(y) = \int_{\Cdb} \, \ln |1+y| \, d \nu(y)   \geq 0.$$
Indeed $$\frac{1}{2 \pi} \, \int_0^{2 \pi} \, \ln ((1 + R \cos \theta)^2 + R^2 \sin^2
\theta) \, d \theta
= \frac{1}{2 \pi} \, \int_0^{2 \pi} \, \ln (1 + R^2 + 2 R \cos \theta)  \, d
\theta,$$
which is known to be $2 \ln R$ if $R \geq 1$, and is $0$  if $R \leq 1$.    In our case $R = 1$.
That is, $\Delta(1 + x) = e^0 = 1$.

The second statement in (1) follows from the first by setting $x = 1-a$.

For (2), the first claim easily follows from part (1). For the second note that if  $a = \Phi(x)^{-1} x - 1$, then $\Phi(a) = 0$.  
Hence $\Phi(a^m) = 0$, and $\tau(a^m) = \tau(\Phi(a^m)) = 0$,  for all $m \in \Ndb$.  If $a$ is even a contraction, then $r(a)\leq 1$, and so by (1) we then have 
$$1=\Delta(1 + a) = \Delta( \Phi(x)^{-1} x) = \Delta( \Phi(x))^{-1} \Delta( x)   = \Delta( \Phi(x))^{-1} \Delta( x).$$
Hence $\Delta(\Phi(x)) = \Delta(x).$

Next we prove (1) and (2)  in the case that $M$ is a von Neumann algebra with a possibly nonfaithful normal tracial state $\tau$.
We use the notation above Lemma \ref{exucexp}, so that
 $N =  zM, \tau' = \tau_{|N}$.  Let $\Delta = \Delta_{\tau'}$ be the
Fuglede-Kadison determinant for $N$.  
   If  $\tau(x^m) = 0$ for all $m \in \Ndb$ then $0 = \tau'(z x^m) = \tau'((zx)^m)$ for all $m \in \Ndb$.
Clearly $r(xz) \leq 1$ in $zM$ by the proof of Lemma \ref{zcalc}.
 Thus by   (1) in the faithful tracial state case we obtain
$\Delta_\tau(1 + x)  = \Delta(z(1+x)) =\Delta(1_N + zx) = 1$.  The other part of (1) is as in the faithful tracial state case.

For (2), that $x \in {\rm Ker}(\Phi)$ again implies that
 $\tau(x^n) = \tau(\Phi(x^n) ) = 0 = \tau'((zx)^n)$
 for all $n \in \Ndb$.    Thus by  (1) we obtain
$\Delta_\tau(1 + x)  = \Delta(z(1+x)) =\Delta(1_N + zx) = 1$.    The proof of the last statement in (2)
 is just as in the faithful tracial state case, with $\Delta_\tau$ in place of $\Delta$.

Finally, the general case.
As observed before Lemma \ref{FK}, $\tau^{**}$ is a normal tracial state on $M= B^{**}$.  For (1), since             
 $\tau^{**}(x^m) = 0$ and the spectral radius of $x$ is not increased in $M$,  by  (1) in the
 the von Neumann algebra case we have $\Delta_\tau(1 + x) \geq 1$.  The second assertion of (1) is as before.

For (2), if $x \in {\rm Ker}(\Phi)$ then again 
$\tau(x^m) = \tau(\Phi(x^m)) = 0$.    Thus by  (1) we obtain
$\Delta_\tau(1 + x) = 1$.    The proof of the last statement in (2)
 is just as for the analogous statement in  the von Neumann algebra case
above.   \end{proof}

{\bf Remark.} If $\Phi : A \to D$ is as in (2), and if  $M$ is a von Neumann algebra and $D$ and $A$ are weak* closed,
and if $\tau$ is faithful, 
then $A$ is a tracial subalgebra of $M$ in the sense of \cite{BLI}.  
If still further $A+ A^*$ is weak* dense in $M$ then $A$ is a  maximal
subdiagonal subalgebra in the sense of Arveson {\rm \cite{Arv}).  

\bigskip

The above gives  a weaker form of the classical Bishop result on the existence of Jensen measures, with a new proof.  Namely
suppose that $\varphi$ is a character on a closed unital subalgebra $A$ of $C(K),$ for a compact space $K$.  Then by the Hahn-Banach theorem
$\varphi$ is the restriction of a tracial state $\tau$ on $C(K).$   If $\mu$ is the probability measure on $K$ corresponding to $\tau$ then
by a remark in the Introduction, $\Delta_\tau(f)
 = \exp ( \int_K \, \ln |f| \, d \mu),$ for $f \in C(K)$.  By  Theorem \ref{baly},
$\exp ( \int_K \, \ln |1 +f| \, d \mu ) = 1$ if $f \in {\rm Ball}(A)$ with $\varphi(f) = 0$.   Also,
if $f \in A$  with  $\varphi(f) \neq 0$, and if the range of $f$ lies in the closed disk
centered at $\varphi(f)$ with radius $|\varphi(f) |$, then  $\exp ( \int_K \, \ln |f| \, d \mu) = |\varphi(f) |$.
  This $\mu$  need not be the desired Jensen measure in Bishop's theorem, indeed the proof above shows that any representing measure for
$\varphi$  satisfies the above.    (See also
 Corollary \ref{JenCd} below.)

It is an interesting open problem as to what conditions (if any) are needed to get  the   existence of Jensen measures
in full generality: for a $D$-character $\Phi : A \to D$, does there exist a tracial state $\psi$ on the containing $C^*$-algebra $B$ such that
$\Delta_\psi(\Phi(a)) \leq \Delta_\psi(a)$ for all $a \in A$.
Note that this forces (as in \cite{Arv}) 
$\Delta_\psi(\Phi(a)) =  \Delta_\psi(a)$ for all $a \in A^{-1}$.   

 \begin{corollary} \label{JenCd} Let $B$ be a $C^*$-algebra (resp.\ von Neumann algebra) with a  faithful (resp.\ faithful normal) tracial state $\tau$.
 Suppose that  $x \in B$, and let $\Gamma$ be the set of (not identically zero) scalar 
homomorphisms
on the closed algebra generated by $x$ and $\I$.  Suppose also that $\tau$ restricts
to a homomorphism on the latter algebra (that is,
$\tau$ is a `noncommuting representing measure' for
 this homomorphism).    If the spectrum of $x$ is contained in the disk centered at $\tau(x)$
of radius $|\tau(x)|$, then   the Jensen equality $|\tau(x)| =  \Delta_\tau(x)$ holds.  \end{corollary}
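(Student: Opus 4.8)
The plan is to reduce the statement to part (1) of Theorem~\ref{baly}, applied to $M = B$ and to the element $a = \tau(x)^{-1} x - \I$. This is essentially the special case $D = \Cdb\I$, $\Phi = \tau(\,\cdot\,)\I$ of part (2) of that theorem; the only difference is that our hypothesis controls the spectrum of $\tau(x)^{-1}x - \I$ rather than its norm, so one must work directly with part (1), whose hypothesis is already phrased in terms of the spectral radius. First I would dispose of the degenerate case $\tau(x) = 0$: here the hypothesis forces $\sigma(x) = \{0\}$, so $x$ is quasinilpotent, and combining $\Delta_\tau(x^n) \leq \tau(|x^n|) \leq \|x^n\|$ (using $\Delta_\tau(y) \leq \tau(|y|)$ from Lemma~\ref{FK}) with multiplicativity $\Delta_\tau(x^n) = \Delta_\tau(x)^n$ gives $\Delta_\tau(x) \leq \|x^n\|^{1/n} \to r(x) = 0$; since $\Delta_\tau \geq 0$ this forces $\Delta_\tau(x) = 0 = |\tau(x)|$.

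So assume $\tau(x) \neq 0$ and set $a = \tau(x)^{-1} x - \I$, which lies in the closed algebra $A$ generated by $x$ and $\I$. I would record two facts. First, $\tau(a) = \tau(x)^{-1}\tau(x) - 1 = 0$, and since every power $a^m$ is a polynomial in $x$ and hence lies in $A$, on which $\tau$ is multiplicative by hypothesis, $\tau(a^m) = \tau(a)^m = 0$ for every $m \in \Ndb$. Second, by the spectral mapping theorem for affine functions $\sigma(a) = \tau(x)^{-1}\sigma(x) - 1$, and for $\mu \in \sigma(x)$ the hypothesis gives $|\tau(x)^{-1}\mu - 1| = |\tau(x)|^{-1}|\mu - \tau(x)| \leq 1$; hence $r(a) \leq 1$. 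Now part (1) of Theorem~\ref{baly} applies to $a$ and yields $\Delta_\tau(\I + a) = 1$, i.e.\ $\Delta_\tau(\tau(x)^{-1}x) = 1$.

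It remains to unwind this using Lemma~\ref{FK}: by multiplicativity of the determinant and its value on scalar multiples of $\I$, $\Delta_\tau(\tau(x)^{-1}x) = \Delta_\tau(\tau(x)^{-1}\I)\,\Delta_\tau(x) = |\tau(x)|^{-1}\Delta_\tau(x)$, whence $\Delta_\tau(x) = |\tau(x)|$, as claimed. The argument runs verbatim in both the $C^*$-algebra and the von Neumann algebra settings, since both Theorem~\ref{baly}(1) and Lemma~\ref{FK} are stated in that generality. The only step needing a moment of care is the degenerate case $\tau(x) = 0$; with that in hand the result is an immediate specialization of Theorem~\ref{baly}, and I do not anticipate any real obstacle.
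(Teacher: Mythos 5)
Your proposal is correct and, in the main case $\tau(x)\neq 0$, is essentially identical to the paper's proof: both reduce to Theorem \ref{baly}(1) applied to $x/\tau(x)-\I$ (equivalently to $x/\tau(x)$ via the second formulation of that part), using multiplicativity of $\tau$ on the subalgebra to kill the moments and the disk hypothesis to bound the spectral radius, and then unwind with multiplicativity of $\Delta_\tau$. The one genuine divergence is the degenerate case $\tau(x)=0$: the paper passes to $B^{**}z$ and invokes Brown's theorem (the Brown measure of a quasinilpotent is the Dirac mass at $0$, forcing $\Delta=0$), whereas you argue elementarily from $\Delta_\tau(x)^n=\Delta_\tau(x^n)\leq\tau(|x^n|)\leq\|x^n\|$ and $\|x^n\|^{1/n}\to r(x)=0$, using only properties already recorded in Lemma \ref{FK}. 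Your route for that sub-case is arguably cleaner since it avoids any reference to the Brown measure; also, your direct use of the spectral mapping theorem for $\sigma_B(x)$ sidesteps the small point (implicit in the paper's use of $\Gamma$) that $\sigma_A(x)$ is the polynomially convex hull of $\sigma_B(x)$ and the disk is convex. No gaps.
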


\begin{proof}  First assume that $\tau(x) \neq 0$.   The disk condition implies that  
 $| \chi(x) - \tau(x) | \leq |\tau(x)|$ for all
$\chi \in \Gamma$, so that $| \chi(1 - x/ \tau(x) | \leq 1$.   That is,  $r(x/\tau(x) - 1) \leq 1$. 
We work in the $C^*$-algebra case, the  von Neumann case being slightly easier.  
Since $\tau((x/\tau(x))^m) = 1$ for all $m \in \Ndb$, we have by Theorem  \ref{baly} that $\Delta_\tau(x/\tau(x)) = 1$ and so 
$|\tau(x)| = \Delta_\tau(x)$.   
If $\tau(x) = 0$ 
then the condition implies that $r(x) = 0$.   Hence $r(\hat{x} z) = 0$ so that the 
Brown measure of $xz$ is the Dirac mass at $0$.   By for example the last assertions of \cite[Theorem 3.13]{Brown}, we have $\Delta_\tau(x)
= \Delta(xz) = 0 = |\tau(x)|$. \end{proof}

\section{Some Gleason-Whitney type results} \label{GW}

In this section we take an intermission from Jensen inequality considerations, and turn to the exclusively weak* (von Neumann algebraic) situation
of $D$-valued Gleason-Whitney type results.
Such results are about existence  and uniqueness of weak* continuous
representing measures (in their case the algebra is $H^\infty(\Ddb)$).
We remark that just as in the classical case there are natural
criteria which ensure the uniqueness of representing measures.
For example, in \cite[Theorem 4.4]{BLI} it is proved that
if an operator algebra $A$ satisfies
a natural generalization of the classical notion of {\em logmodularity}, then every unital completely contractive
homomorphism $\varphi : A \rightarrow B(H)$ extends uniquely to a completely positive and completely
contractive map $B \rightarrow B(H)$.

The following is a `Hoffman-Rossi/Gleason-Whitney-like' theorem.
  
\begin{corollary} \label{HRf}  Let $\tau$ be a (possibly non faithful) normal tracial state on a  von Neumann algebra 
$M$, and let $D$ be a von Neumann subalgebra of $M$ such that $\tau$ is faithful on $D$. Let $\Phi : A \to D$ be a $\tau$-preserving unital 
map on a subalgebra  $A$ of $M$ containing $D$, which is a $D$-bimodule map (or equivalently is the identity map on $D$).  
Then there is a $\tau$-preserving normal conditional expectation $M \to D$ extending $\Phi.$ \end{corollary}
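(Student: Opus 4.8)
The plan is to deduce this immediately from Lemma~\ref{exucexp} together with a short uniqueness-type computation. Since $\tau$ is a normal tracial state on $M$ which is faithful on the von Neumann subalgebra $D$, Lemma~\ref{exucexp} furnishes a (unique) $\tau$-preserving normal conditional expectation $\Psi : M \to D$. So the only thing left to verify is that $\Psi$ agrees with $\Phi$ on $A$; normality plays no role in that verification, which rests solely on the module property of $\Phi$ and on $\tau$.

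To carry this out, fix $a \in A$ and $d \in D$. Because $\Phi$ is the identity on $D$ and a left $D$-module map, $\Phi(da) = d\,\Phi(a)$, and since $\Phi$ is $\tau$-preserving,
$$\tau\big(d\,\Phi(a)\big) = \tau\big(\Phi(da)\big) = \tau(da).$$
Likewise $\Psi$, being a conditional expectation onto $D$, is a $D$-bimodule map, and being $\tau$-preserving it gives
$$\tau\big(d\,\Psi(a)\big) = \tau\big(\Psi(da)\big) = \tau(da).$$
Hence $\tau\big(d\,(\Phi(a)-\Psi(a))\big) = 0$ for all $d \in D$. Now $c := \Phi(a)-\Psi(a)$ lies in $D$, and choosing $d = c^*$ yields $\tau(c^*c)=0$; faithfulness of $\tau$ on $D$ then forces $c=0$, i.e.\ $\Phi(a)=\Psi(a)$. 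Thus $\Psi_{|A}=\Phi$, and $\Psi$ is the required extension.

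There is no real obstacle here once Lemma~\ref{exucexp} is in hand; it is worth noting that the computation uses only the \emph{left} $D$-module property of $\Phi$ (so the weaker half of the stated equivalent hypotheses already suffices) and the faithfulness of $\tau$ on $D$. If one preferred to avoid quoting Lemma~\ref{exucexp}, one could instead realize $\Psi$ as the Banach-space dual of the inclusion $L^1(D,\tau)\hookrightarrow L^1(M,\tau)$ and check directly that this is a $\tau$-preserving normal conditional expectation extending $\Phi$; but invoking the lemma is cleaner, and this is essentially the route already used for the last statement of Theorem~\ref{baly}.
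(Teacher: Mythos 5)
Your proof is correct. Both you and the paper take $\Psi$ to be the expectation supplied by Lemma~\ref{exucexp}; the difference is in how one checks $\Psi_{|A}=\Phi$. The paper routes this through the support projection $z$ of $\tau$: it defines $\rho: az \mapsto \Phi(a)z$ on $Az$, checks that $\rho$ is a well-defined, $\tau$-preserving, unital idempotent $Dz$-module map onto $Dz$, identifies $\rho$ with the restriction to $Az$ of the unique $\tau$-preserving normal expectation $\Psi':Mz\to Dz$ (this step leans on the uniqueness result \cite[Lemma 5.3]{BLI}), and then unwinds $\Psi(a)z=\Psi'(az)=\rho(az)=\Phi(a)z$. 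You instead compare $\Phi$ and $\Psi$ directly: since both are $\tau$-preserving left $D$-module maps, $\tau\bigl(d(\Phi(a)-\Psi(a))\bigr)=\tau(da)-\tau(da)=0$ for all $d\in D$, and faithfulness of $\tau$ on $D$ (applied with $d=c^*$, $c=\Phi(a)-\Psi(a)\in D$) kills the difference. This is genuinely shorter: it avoids the corner reduction and the appeal to the external uniqueness lemma, and it isolates exactly what is used (the left module property and faithfulness on $D$); the same trace identity $\tau(d\Phi(a))=\tau(da)$ is in fact the engine of the paper's well-definedness check for $\rho$, so you have simply deployed it at the right spot. The one caveat concerns your closing side remark: realizing $\Psi$ as the dual of $L^1(D,\tau)\hookrightarrow L^1(M,\tau)$ is clean only when $\tau$ is faithful on $M$; in the possibly non-faithful setting of this corollary one must first pass to $Mz$, which is precisely the content of Lemma~\ref{exucexp}, so that alternative does not really bypass the lemma.
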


\begin{proof}  The desired expectation is the map $\Psi$ in Lemma \ref{exucexp}. 
Let $z$ be the support projection for $\tau$ as before.
  The map $\rho : a z\mapsto \Phi(a)z$ from $Az$ to $Dz$ is
well defined and $\tau$-preserving.  This is 
similar to 
Lemma \ref{exucexp} 
(or to the alternative argument that $\rho'$ has those properties there):  
  If $m, m' \in A$ with $m z = m'z$ then 
$$\tau(d \Phi(m-m')) = \tau(d(m-m')) = \tau(d(m-m')z) = 0 , \qquad d \in D.$$
Thus $\Phi(m-m') z = 0$, so that $\rho$ is well defined.   Now $$\tau(\rho(az) ) = \tau(\Phi(a)) = \tau(a)
= \tau(az),$$
so $\rho$ is $\tau$-preserving. 
 It clearly 
is a unital idempotent $Dz$-module map  onto $Dz$.  As in the proof of Theorem \ref{baly}
we see that $Dz = Az \cap A^* z$ and $\rho$ is the restriction to $Az$ of the unique $\tau$-preserving normal conditional expectation $\Psi'$ from $Mz$ to $Dz$.
By the proof of Lemma \ref{exucexp} we see that 
$\Psi(m) z = \Psi'(mz)$, where
 $\Psi$ is the map in Lemma \ref{exucexp}.   So $$\Psi(a) z
 = \Psi'(az) = \rho(az) = \Phi(a) z, \qquad a \in A .$$
 Hence $\Psi$ extends $\Phi$.
\end{proof}
 
{\bf Remarks.} 1) \ 
    The trick above and in the last section involving the reduction by the projection $z$
    will yield a theory of  subdiagonal subalgebras of  von Neumann algebras with 
  a  normal tracial state that is not  necessarily faithful.  
Indeed this theory reduces to the usual theory with respect to the subalgebras $Dz \subset Az \subset Mz$. 
As in the proof of Corollary \ref{HRf} there is 
an appropriate homomorphism $Az \to Dz$.   If $a_t + b_t^* \to x \in M$ weak* then  $a_t z + (b_t z)^* \to xz$ weak*.
Thus the weak* closure of $Az$ is a subdiagonal subalgebra of $Mz$.   

\medskip

2) \  Let $\tau$ be a tracial state on a $C^*$-algebra $M$, and let $J$ be its left kernel as above.   If $\Phi : A \to D$ is a $D$-bimodule map on a subalgebra $A$ of $M$
onto a $C^*$-subalgebra $D$ of $A$, which is $\tau$-preserving, then 
$\Phi(J \cap A) \subset J \cap D$.   Indeed if $x \in J \cap A$ and $d \in D$ then 
$$\tau(d \Phi(x)) = \tau(\Phi(d x)) = \tau(d x) = 0 , \qquad d \in D.$$  
Thus $\Phi(x) \in J \cap D$.  

Thus  if $\Phi$ is also a homomorphism, for example, then it induces a 
completely  contractive homomorphism
$\tilde{\Phi} : A/(J \cap A) \to D/(J \cap D)$.
Now $D/(J \cap D)$ may be identified with $Dz$.   Indeed multiplication by $z$ on $D$ is a $*$-homomomorphism onto $Dz$ with kernel $M z^\perp \cap D = J \cap D$.   One might expect that $\tilde{\Phi}$ is a disguised form 
of the canonical map $Az \to Dz$ considered elsewhere in this section,
but this is in general not the case.  Indeed $J \cap A$ often equals
$J \cap D$, which is $(0)$ if $\tau$ is faithful on $D$.  In 
such cases $A/(J \cap A) = A$. However $Az$ need not be isomorphic to $A$.
 Similarly, it does not seem feasible to replace our $Dz \subset Az \subset Mz$ arguments above by 
quotient space arguments with
$D/(J \cap D), A/(J \cap A), M/J$. 

\begin{corollary} \label{HRab}  \ Let $M$
 be a commutative von Neumann algebra, 
and let $D$ be a $\sigma$-finite von Neumann subalgebra of $M$.   Let $\Phi : A \to D$ be a unital weak* continuous linear 
map on a weak* closed subalgebra  $A$ of $M$ containing $D$, which is a $D$-bimodule map (or equivalently is the identity map on $D$).   Then there is a 
 normal conditional expectation of $M$ onto $D$ extending $\Phi$, if and only if for some (or for every)  faithful normal state $\tau$ on $D$ there exists 
a normal state  $\psi_\tau$ on $M$ extending $\tau \circ \Phi$.  \end{corollary}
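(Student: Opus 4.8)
The plan is to reduce the whole statement to Corollary \ref{HRf}, exploiting the commutativity of $M$.

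\emph{Forward implication.} Suppose $\Psi : M \to D$ is a normal conditional expectation extending $\Phi$. Given any faithful normal state $\tau$ on $D$, the composite $\psi_\tau := \tau \circ \Psi$ is a normal state on $M$ (composition of normal positive unital maps), and for $a \in A$ we have $\psi_\tau(a) = \tau(\Psi(a)) = \tau(\Phi(a))$, so $\psi_\tau$ extends $\tau \circ \Phi$. Since $D$ is $\sigma$-finite it carries a faithful normal state, so this yields simultaneously the ``for every'' and (a fortiori) the ``for some'' clause.

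\emph{Converse implication.} Suppose $\tau$ is a faithful normal state on $D$ and $\psi := \psi_\tau$ is a normal state on $M$ with $\psi_{|A} = \tau \circ \Phi$. Because $M$ is commutative, $\psi$ is automatically a (possibly non-faithful) normal tracial state on $M$. I would then verify the two compatibility conditions needed to invoke Corollary \ref{HRf} with $\psi$ in the role of its tracial state: first, $\psi$ is faithful on $D$, since for $d \in D \subseteq A$ one has $\psi(d) = (\tau \circ \Phi)(d) = \tau(d)$, i.e.\ $\psi_{|D} = \tau$; second, $\Phi$ is $\psi$-preserving, since for $a \in A$ one has $\psi(\Phi(a)) = \tau(\Phi(a)) = (\tau \circ \Phi)(a) = \psi(a)$, using $\Phi(a) \in D$ and the first point. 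The remaining hypotheses of Corollary \ref{HRf} --- that $\Phi$ is a unital $D$-bimodule map on a subalgebra $A \supseteq D$ of $M$ --- are part of the assumptions. Corollary \ref{HRf} then produces a $\psi$-preserving normal conditional expectation $M \to D$ extending $\Phi$, which is exactly what is required. Combining the two implications with the nonemptiness of the set of faithful normal states on the $\sigma$-finite algebra $D$ closes the cycle, showing that ``for some'' and ``for every'' are each equivalent to the existence of the conditional expectation.

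I do not expect a genuine obstacle here; the content has essentially been front-loaded into Corollary \ref{HRf} and Lemma \ref{exucexp}. The one point requiring a little care is that $\psi_\tau$ need not be faithful on all of $M$, so one must use the non-faithful version of Corollary \ref{HRf} rather than Lemma \ref{exucexp} directly; and commutativity of $M$ is used crucially to know $\psi_\tau$ is tracial --- without it the hypothesis ``a normal state extending $\tau \circ \Phi$'' would be too weak to run the argument.
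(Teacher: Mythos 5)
Your proof is correct and follows essentially the same route as the paper: the forward direction composes $\tau$ with the expectation, and the converse applies Corollary \ref{HRf} with $\psi_\tau$ in the role of the tracial state (the paper simply asserts that the hypotheses of that corollary are met, whereas you verify them explicitly, including the use of commutativity to see that $\psi_\tau$ is tracial and the computation showing $\psi_\tau$ restricts to $\tau$ on $D$ and that $\Phi$ is $\psi_\tau$-preserving). No gaps.
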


\begin{proof} If  $\Psi : M \to D$ is a  normal conditional expectation extending $\Phi$ and $\tau$ is a  faithful normal state on $D$
then $\psi_\tau = \tau \circ \Psi$  is
a normal state $\tau'$ on $M$ extending $\tau \circ \Phi$.    

Conversely, suppose that there is
a normal state $\psi_\tau$ on $M$ extending $\tau \circ \Phi$.   Then the conditions of Corollary \ref{HRf} are met with $\tau$ replaced by $\psi_\tau$, so that
 there exists a $\psi_\tau$-preserving normal conditional expectation $M \to D$ extending $\Phi.$ 
\end{proof}

We pause to give two $D$-valued Gleason-Whitney type results of a similar flavor to the last couple of results:

\begin{proposition} \label{GWh}  Suppose that $A$ is a weak* closed
unital  subalgebra of a von Neumann algebra $M$. 
Suppose that $\Phi : A \to D$  is a unital weak* continuous 
 contractive  linear map into a  von Neumann algebra $D$.  If $A$ has the 
Gleason-Whitney type property that for every weak* continuous state $\varphi$ 
on $D$, $\varphi \circ \Phi$  has a unique  normal state extension  to $M$,
then $\Phi$ has a unique normal positive extension $\Psi : M \to D$.    \end{proposition}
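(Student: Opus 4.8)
The plan is to build $\Psi$ by gluing together the unique normal state extensions supplied by the hypothesis, and to manufacture it as a Banach-space adjoint so that it is automatically normal and $D$-valued. For each weak* continuous state $\varphi$ on $D$, the composite $\varphi\circ\Phi$ is a weak* continuous state on $A$ (contractive since $\Phi$ and $\varphi$ are, unital since $\Phi(\I)=\I$), so by hypothesis it has a \emph{unique} normal state extension to $M$; call it $E(\varphi)\in M_*$. The first key step is to observe that $E$ is affine on the set of weak* continuous states of $D$: if $\varphi_1,\varphi_2$ are such states and $t\in[0,1]$, then $tE(\varphi_1)+(1-t)E(\varphi_2)$ is a normal state on $M$ whose restriction to $A$ is $(t\varphi_1+(1-t)\varphi_2)\circ\Phi$, and the uniqueness hypothesis applied to the normal state $t\varphi_1+(1-t)\varphi_2$ on $D$ forces $E(t\varphi_1+(1-t)\varphi_2)=tE(\varphi_1)+(1-t)E(\varphi_2)$.

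Next I would promote $E$ to a bounded linear map $\widetilde E\colon D_*\to M_*$. On a nonzero normal positive functional $\omega$ set $\widetilde E(\omega)=\|\omega\|\,E(\omega/\|\omega\|)$; affineness of $E$ gives additivity of $\widetilde E$ on the cone $D_*^+$ (compare the two convex combinations representing $(\omega_1+\omega_2)/(\|\omega_1\|+\|\omega_2\|)$), and $\|\widetilde E(\omega)\|=\|\omega\|$ there because $E$ sends states to states. A Jordan-decomposition argument on the self-adjoint part and then complexification extend $\widetilde E$ linearly to all of $D_*$ (which is the linear span of its normal states), with $\|\widetilde E\|\le 2$. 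Now define $\Psi:=\widetilde E^{*}\colon M=(M_*)^{*}\to (D_*)^{*}=D$. As the adjoint of a bounded operator between the preduals, $\Psi$ is automatically weak* continuous, i.e.\ normal, and genuinely $D$-valued.

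It remains to verify the properties of $\Psi$. For $a\in A$ and any weak* continuous state $\varphi$ on $D$ we have $\varphi(\Psi(a))=\widetilde E(\varphi)(a)=E(\varphi)(a)=\varphi(\Phi(a))$; since normal states separate points of $D$, $\Psi(a)=\Phi(a)$, so $\Psi$ extends $\Phi$ (and in particular $\Psi(\I)=\I$). Positivity is the same: for $m\in M_+$ and every normal state $\varphi$ on $D$, $\varphi(\Psi(m))=E(\varphi)(m)\ge 0$, so $\Psi(m)\ge 0$. For uniqueness, suppose $\Psi'\colon M\to D$ is another normal positive extension of $\Phi$; then $\Psi'(\I)=\Phi(\I)=\I$, so for each weak* continuous state $\varphi$ on $D$ the functional $\varphi\circ\Psi'$ is a normal state on $M$ extending $\varphi\circ\Phi$, whence $\varphi\circ\Psi'=E(\varphi)=\varphi\circ\Psi$ by the uniqueness hypothesis; as normal states separate points of $D$, $\Psi'=\Psi$ (this also shows that $\Psi$ does not depend on the choices made in its construction).

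The step I expect to be the main obstacle is the passage from the affine map $E$ on the state space of $D$ to a bona fide bounded linear map $\widetilde E\colon D_*\to M_*$ — checking well-definedness and additivity of the extension first on $D_*^+$ and then on all of $D_*$, and above all ensuring that the resulting $\Psi$ lands in $D$ itself rather than merely in $D^{**}$. Taking the Banach-space adjoint of $\widetilde E$ is precisely the device that resolves this, delivering a normal, $D$-valued map in one stroke; once that is in hand, the extension property, positivity, unitality, and uniqueness all follow from the fact that normal states separate and order-detect in $D$.
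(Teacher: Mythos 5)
Your proposal is correct and follows essentially the same route as the paper's proof: establish affineness of $\varphi\mapsto\psi_\varphi$ from the uniqueness hypothesis, extend to a bounded linear map on $D_*$ via positive homogeneity, Jordan decomposition and complexification, take the Banach-space adjoint to obtain a normal $D$-valued $\Psi$, and verify extension, positivity and uniqueness by testing against normal states (the paper uses vector states for the uniqueness step, a cosmetic difference).
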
 

\begin{proof}  For a weak* continuous state $\varphi$ on $D$
we have that $\varphi \circ \Phi$ has a unique  normal state extension 
$\psi_\varphi$ to $M$,
by hypothesis.
It is easy to see that this property holds with `state' replaced by
positive multiple of a state.  The map
$\varphi \to \psi_\varphi$ preserves convex combinations, and norms.   Indeed for $a \in A, t \in [0,1],$
we have that
$$\psi_{t \varphi_1 + (1-t) \varphi_2}(a) = (t \varphi_1 + (1-t) \varphi_2)(\Phi(a)) = (t \psi_{\varphi_1} + (1-t) \psi_{\varphi_2})(a),$$
where $\varphi_1,  \varphi_2$ are normal states or positive multiples of such.  
By uniqueness
of the extension, $\psi_{t \varphi_1 + (1-t) \varphi_2} = t \psi_{\varphi_1} + (1-t) \psi_{\varphi_2}$.
Indeed we have that $\psi_{\varphi_1 +  \varphi_2} = \psi_{\varphi_1} +  \psi_{\varphi_2}$ and
$\psi_{t \varphi_1} = t \psi_{\varphi_1}$ if $\varphi_1,  \varphi_2$ are positive normal  functionals on $D$ and $t \geq 0$.
It then follows easily that for such  $\varphi_1,  \varphi_2$ the map $\varphi_1 - \varphi_2 \mapsto  \psi_{\varphi_1} -  \psi_{\varphi_2}$ is well defined,
and its domain is the selfadjoint normal  functionals on $D$.   If $\varphi_k$ is a  positive normal  functional on $D$ for 
$k = 1, 2, 3, 4$, then we obtain that the map 
$$T : \varphi_1 - \varphi_2 + i(\varphi_3 - \varphi_4) \mapsto  \psi_{\varphi_1} -  \psi_{\varphi_2} + i( \psi_{\varphi_3} -  \psi_{\varphi_4}) \in M_*$$
is well defined, and its domain is $D_*$.   If $\varphi_k$ is also contractive for all $k$ we see that 
$\| T(\varphi_1 - \varphi_2 + i(\varphi_3 - \varphi_4)) \| \leq 4$.   Thus by the Hahn-Jordan decomposition in $D_*$ it follows that 
$T$ is bounded (we will see momentarily that it is contractive).   For $a \in A$ and a normal state $\varphi$ on $D$ we have
$T^*(a) (\varphi) = T(\varphi)(a) =\Phi(a)( \varphi)$.   It follows that $\Psi = T^*$ extends $\Phi$.  In particular $T^*(1) =  1$.   For $x \in M_+$ we have 
$T^*(x)(\varphi) = \psi_\varphi(x) \geq 0$.   Thus $T$ is positive.
It is well known that a unital map between $C^*$-algebras  is positive  if and only if it is contractive. 
For the uniqueness note that if $D \subset B(K)$, then  for every unit vector $\xi \in K$, we have that 
$\langle \Psi(\cdot) \xi , \xi \rangle$ is the unique normal state extending $\langle \Phi(\cdot) \xi , \xi \rangle$.  \end{proof}

Note that by \cite[Theorem 4.1]{BL-FMR} or  \cite[Lemma 5.8]{BLueda}, the 
Gleason-Whitney type property that  every weak* continuous state
on $A$ has at most one normal state extension  to $M$, 
is equivalent to  $A + A^*$ being weak* dense in $M$.   In this case it is evident that 
any $\Phi : A \to B(H)$ has at most one weak* continuous extension to $M$.

\begin{proposition} \label{GWh2}  Suppose that $A$ is a weak* closed
unital  subalgebra of a von Neumann algebra $M$.   
Suppose that $\Phi : A \to D \subset B(H)$
 is a unital weak* continuous 
completely contractive  linear map into a  von Neumann algebra $D$ on $H$.  If $A$ has the 
Gleason-Whitney type property that  every state extension to $M$ of
a weak* continuous state on $A$ is normal, then $\Phi$ has a normal completely positive
extension $M \to B(H)$.    \end{proposition}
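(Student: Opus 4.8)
The plan is to first produce \emph{some} completely positive extension of $\Phi$ to $M$ via Arveson's extension theorem, and then to show that the Gleason--Whitney hypothesis forces this extension to be normal, exactly as in the classical Gleason--Whitney situation where the bare extension theorem gives no control of weak* continuity.

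First I would use that $\Phi$ is a unital completely contractive map on the operator algebra $A$ to obtain, by Arveson's extension theorem \cite[Theorem 1.2.9]{Arv2} together with the theory of operator systems (a unital completely contractive map on a unital operator space is completely positive), a unital completely positive map $\Psi : M \to B(H)$ with $\Psi_{|A} = \Phi$. The only point remaining is to show that such a $\Psi$ is automatically weak* continuous.

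Next I would fix $\xi, \eta \in H$ and examine the functional $a \mapsto \langle \Phi(a) \xi, \xi \rangle$ on $A$. Since the vector functional $\langle\, \cdot\, \xi, \xi \rangle$ is normal on the von Neumann algebra $D \subseteq B(H)$ and $\Phi$ is weak* continuous, this functional lies in $A_*$; for $\xi$ a unit vector it is in addition a weak* continuous state on $A$ (since $\Phi$ is unital). Its extension $x \mapsto \langle \Psi(x)\xi,\xi\rangle$ is then a state on $M$ extending it, hence normal by the Gleason--Whitney hypothesis. Just as in the proof of Proposition \ref{GWh}, the same conclusion holds with `state' replaced by `positive multiple of a state', so $x \mapsto \langle \Psi(x)\zeta,\zeta\rangle$ is normal on $M$ for every $\zeta \in H$. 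Applying the polarization identity
$$\langle \Psi(x)\xi,\eta\rangle = \frac14 \sum_{k=0}^3 i^k \, \langle \Psi(x)(\xi + i^k\eta),\, \xi + i^k\eta\rangle, \qquad x \in M,$$
we conclude that $x \mapsto \langle \Psi(x)\xi,\eta\rangle$ lies in $M_*$ for all $\xi,\eta \in H$.

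Finally I would upgrade this to normality of $\Psi$ itself. The adjoint $\Psi^* : B(H)^* \to M^*$ is norm continuous, and the previous step says precisely that it carries each rank-one normal functional $x \mapsto \langle x\eta,\xi\rangle$ on $B(H)$ into the predual $M_*$. The finite-rank functionals are norm dense in $B(H)_* = S_1(H)$, and $M_*$ is norm closed in $M^*$, so $\Psi^*(B(H)_*) \subseteq M_*$; equivalently, $\Psi$ is weak*-to-weak* continuous. Thus $\Psi$ is a normal completely positive extension of $\Phi$, as required. I expect this last step --- promoting pointwise weak* continuity against all vector functionals to genuine normality of $\Psi$ --- to be the main (if routine) obstacle: it is here that one uses norm density of the finite-rank operators in the trace class, norm-closedness of $M_*$, and boundedness of $\Psi$, with polarization serving to reduce the vector-functional computation to the state extensions that the hypothesis controls.
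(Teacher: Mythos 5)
Your proposal is correct and follows essentially the same route as the paper: extend $\Phi$ to a unital completely positive $\Psi:M\to B(H)$ by Arveson's extension theorem, apply the Gleason--Whitney hypothesis to the vector states $\langle\Psi(\cdot)\zeta,\zeta\rangle$, scale and polarize to get normality of all $\langle\Psi(\cdot)\xi,\eta\rangle$, and then upgrade to normality of $\Psi$. The only (immaterial) difference is in the final upgrade, where the paper argues via WOT-convergence of $\Psi$ on bounded weak*-convergent nets while you argue dually via $\Psi^*$ carrying the norm-dense finite-rank functionals, hence all of $B(H)_*$, into the norm-closed predual $M_*$.
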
 

 \begin{proof}  
Let $\Psi : M \to B(H)$ be a completely contractive linear extension of $\Phi$. 
The existence of such an extension follows from Arveson's extension of the Hahn-Banach theorem to $B(H)$-valued maps (see \cite[Theorem 1.2.9]{Arv2}).
For each $\zeta \in H$ of norm $1$, $\langle \Psi(\cdot) \zeta , \zeta \rangle$ is a state
extension of its restriction  $\langle \Phi(\cdot) \zeta , \zeta \rangle$ to $A$.   By the Gleason-Whitney hypothesis 
 $\langle \Psi(\cdot) \zeta , \zeta \rangle$  is weak* continuous on $M$, and by scaling
this holds for all $\zeta \in H$.  By polarization,   $\langle \Psi(\cdot) \zeta , \eta \rangle$  is weak* continuous on $M$
 for all $\zeta, \eta \in H$.    If $x_t \to x$ is a bounded weak* converging net in $M$ then
$\Psi(x_t) \to \Psi(x)$ WOT.  It follows that  $\Psi$ is weak* continuous.
\end{proof}

\section{The Bishop-Ito-Schreiber theorem and the characterization of homomorphisms}  \label{BIS}

The classical  Bishop-Ito-Schreiber theorem  states 
that the existence of a Jensen measure actually  characterizes the characters among the linear functionals
on a uniform algebra.  
Here we present noncommutative variants of this result.
One complication that occurs in the noncommutative case requires us
to first establish some results about Jordan homomorphisms.
 
\begin{lemma} \label{l1}
Let $B$ be a subalgebra of an algebra $A$. If $\Phi : A \to B$ is an idempotent 
$B$-module map  whose kernel is a  subalgebra of $A$, 
 then $\Phi$ is a 
homomorphism.   In particular this holds if ${\rm Ker}(\Phi)$ is a left or right ideal in $A$.  \end{lemma}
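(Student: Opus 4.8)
The plan is to use the idempotency of $\Phi$ to split every element of $A$ into a ``$B$-part'' and a kernel part, then expand a product of two elements and apply $\Phi$ term by term. First I would record the two elementary consequences of the hypotheses that drive the argument. Idempotency, $\Phi\circ\Phi=\Phi$, says precisely that $\Phi(\Phi(a))=\Phi(a)$ for all $a\in A$. Consequently, for each $a\in A$ the element $k_a:=a-\Phi(a)$ lies in $\ker\Phi$, since $\Phi(k_a)=\Phi(a)-\Phi(\Phi(a))=0$; thus every $a\in A$ decomposes as
\[
a=\Phi(a)+k_a,\qquad \Phi(a)\in B,\quad k_a\in\ker\Phi .
\]
This uses nothing about $A$ being unital, nor that $\Phi$ is onto $B$.

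Next, for $x,y\in A$ I would expand
\[
xy=\bigl(\Phi(x)+k_x\bigr)\bigl(\Phi(y)+k_y\bigr)=\Phi(x)\Phi(y)+\Phi(x)k_y+k_x\Phi(y)+k_xk_y
\]
and apply $\Phi$. Using that $\Phi$ is a $B$-module map together with idempotency one gets: $\Phi(\Phi(x)\Phi(y))=\Phi(x)\,\Phi(\Phi(y))=\Phi(x)\Phi(y)$ (pull out $\Phi(x)\in B$ on the left, then use idempotency); $\Phi(\Phi(x)k_y)=\Phi(x)\,\Phi(k_y)=0$ (pull out $\Phi(x)\in B$ on the left); $\Phi(k_x\Phi(y))=\Phi(k_x)\,\Phi(y)=0$ (pull out $\Phi(y)\in B$ on the right); and $\Phi(k_xk_y)=0$ because $k_xk_y\in\ker\Phi$, the kernel being a subalgebra. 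Summing the four terms yields $\Phi(xy)=\Phi(x)\Phi(y)$, which is the assertion.

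For the ``in particular'' clause, I would simply observe that a left ideal $J$ of $A$ satisfies $JJ\subseteq AJ\subseteq J$ and a right ideal satisfies $JJ\subseteq JA\subseteq J$; in either case $\ker\Phi$ is closed under multiplication, hence a subalgebra, so the main statement applies directly.

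I do not expect a genuine obstacle here. The only care needed is bookkeeping: keeping straight which of the left- or right-module identities annihilates each mixed term $\Phi(x)k_y$ and $k_x\Phi(y)$, and noticing that the argument rests on idempotency alone — one never needs $\Phi$ to restrict to the identity on all of $B$ or to be surjective, only that it fixes its own range, which idempotency guarantees.
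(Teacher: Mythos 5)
Your proof is correct and follows essentially the same route as the paper: both expand $xy$ via the decomposition $a=\Phi(a)+(a-\Phi(a))$, kill the product of the two kernel parts using that $\ker\Phi$ is a subalgebra, and handle the mixed terms with the $B$-module property and idempotency. The only difference is cosmetic bookkeeping in how the four terms are grouped.
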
 \begin{proof}  For $x, y \in A$ we have 
$$\Phi(xy) = \Phi((x - \Phi(x))(y - \Phi(y)) +  \Phi(x)y + x \Phi(y) - \Phi(x) \Phi(y)) = \Phi(x) \Phi(y) ,$$
since $\Phi(\Phi(x)y) = \Phi(x \Phi(y)) = \Phi(x) \Phi(y)$ and $\Phi(\Phi(x) \Phi(y)) = \Phi(\Phi(\Phi(x)y)) =  \Phi(x) \Phi(y)$.
\end{proof}  

\begin{lemma} \label{l2}
For any Jordan homomorphism $\Phi : A \to B$ between algebras,  and $a \in {\rm Ker} ( \Phi )$ and $b \in A$, we have that
$\Phi(ab)$ and $\Phi(ba)$ have square zero.     \end{lemma}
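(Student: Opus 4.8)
The plan is to reduce everything to the standard ``polarized'' Jordan identities. Recall that a Jordan homomorphism $\Phi : A \to B$ between associative algebras satisfies $\Phi(x^2) = \Phi(x)^2$ for all $x$, and hence, by linearizing, $\Phi(xy+yx) = \Phi(x)\Phi(y) + \Phi(y)\Phi(x)$. From these one derives, by expanding $\Phi\big((x+z)y(x+z)\big)$ in two ways, the well-known relations
\[
\Phi(xyx) = \Phi(x)\Phi(y)\Phi(x), \qquad \Phi(xyz + zyx) = \Phi(x)\Phi(y)\Phi(z) + \Phi(z)\Phi(y)\Phi(x), \qquad x,y,z \in A .
\]
(This is classical; it certainly holds for all complex algebras, which is the only case relevant here.) The proof will use nothing more than these, together with the hypothesis $\Phi(a) = 0$.

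So fix $a \in {\rm Ker}(\Phi)$ and $b \in A$. First I would note that, applying $\Phi(x^2) = \Phi(x)^2$ to $x = ab$,
\[
\Phi(ab)^2 = \Phi\big((ab)^2\big) = \Phi(abab) .
\]
Then I would apply the polarized identity $\Phi(xyz + zyx) = \Phi(x)\Phi(y)\Phi(z) + \Phi(z)\Phi(y)\Phi(x)$ with the choice $x = a$, $y = b$, $z = ab$. Since $\Phi(a) = 0$ the right-hand side vanishes, while the left-hand side is $\Phi\big(ab(ab) + (ab)ba\big) = \Phi(abab + ab^2a)$; hence $\Phi(abab) = -\Phi(ab^2a)$. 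But $\Phi(ab^2a) = \Phi(a)\Phi(b^2)\Phi(a) = 0$ by the sandwich identity $\Phi(xyx) = \Phi(x)\Phi(y)\Phi(x)$ with $x = a$, $y = b^2$. Combining, $\Phi(ab)^2 = \Phi(abab) = 0$, so $\Phi(ab)$ has square zero. For $\Phi(ba)$ I would either run the same computation with $z = ba$ in place of $z = ab$ (the cross term is again $ab^2a$), or simply observe that $\Phi(ab) + \Phi(ba) = \Phi(ab+ba) = \Phi(a)\Phi(b) + \Phi(b)\Phi(a) = 0$, so that $\Phi(ba) = -\Phi(ab)$ and therefore $\Phi(ba)^2 = \Phi(ab)^2 = 0$.

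There is no genuine obstacle here; the argument is only a few lines. The one point deserving a moment's care is the choice of substitution in the polarized identity: one must take $z = ab$ precisely so that the ``cross term'' $zyx$ collapses to the honest sandwich $ab^2a = a\cdot b^2\cdot a$, which is annihilated because $a \in {\rm Ker}(\Phi)$. A more naive attempt --- symmetrizing $\Phi\big((ab)(ba) + (ba)(ab)\big)$, where $(ab)(ba)=ab^2a$ and $(ba)(ab)=ba^2b$ both map to zero --- also works, but it introduces a harmless factor of $2$ on the other side, so the substitution above is the tidiest route.
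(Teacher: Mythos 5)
Your proof is correct. It takes a slightly different (and equally short) route from the paper's: the paper, following Zelazko, sets $x=\Phi(ab)$, notes $\Phi(ba)=-x$, and computes $\Phi\bigl((ab-ba)^2\bigr)$ in two ways to get $4x^2=2x^2$, using only the two-variable sandwich identity $\Phi(aba)=\Phi(a)\Phi(b)\Phi(a)$ on the cross terms $ab^2a$ and $ba^2b$. You instead invoke the three-variable linearization $\Phi(xyz+zyx)=\Phi(x)\Phi(y)\Phi(z)+\Phi(z)\Phi(y)\Phi(x)$ with the substitution $z=ab$, which kills the right-hand side outright (since $\Phi(a)=0$) and yields $\Phi(abab)=-\Phi(ab^2a)=0$ directly, with no cancellation of a factor of $2$. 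Both arguments ultimately rest on the same sandwich identity (your polarized form follows from it by replacing $x$ with $x+z$, with no division required), so the difference is one of bookkeeping rather than substance; indeed, the ``naive alternative'' you mention at the end --- symmetrizing $\Phi\bigl((ab)(ba)+(ba)(ab)\bigr)$ --- is, after rearranging, exactly the paper's computation, since $(ab-ba)^2=(ab)^2+(ba)^2-(ab)(ba)-(ba)(ab)$. What your main route buys is a marginally cleaner derivation that makes the role of $a\in{\rm Ker}(\Phi)$ more transparent; what the paper's buys is that it only ever quotes the two-variable identity, which it justifies explicitly from the Jordan product formula $aba=2(a\circ b)\circ a-a^2\circ b$.
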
  \begin{proof} 
We use an idea from \cite{Zel}.
We have $\Phi(ab) + \Phi(ba) = \Phi(a) \Phi(b) + \Phi(b) \Phi(a) = 0$,
so that $\Phi(ab) = -\Phi(ba)$.  Defining $x = \Phi(ab)$ we have
$$4 x^2 = \Phi((ab-ba)^2) = x^2 + x^2 - \Phi(ab^2a) - \Phi(b a^2b) = 2x^2.$$
We have here used the identity  $\Phi(aba)=\Phi(a)\Phi(b)\Phi(a)$ which follows from e.g.\ a formula on 
p.\ 208 in Section 3.2 of \cite{BratteliRobinsonV1}, or 
from the identity $aba = 2 (a \circ b) \circ a -  a^2 \circ b$, where $\circ$ is the Jordan product.
So $x^2 = 0$.  \end{proof}

\begin{corollary} Suppose that $B$ is a closed selfadjoint subalgebra
of an operator algebra $A$, and that $P : A \to B$ is an idempotent  Jordan homomorphism and $B$-bimodule map.
If $B$ has no nonzero square zero elements 
then $P$ is a homomorphism.  \end{corollary}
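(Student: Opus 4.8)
The plan is to deduce this directly from Lemmas \ref{l1} and \ref{l2}: the key observation is that the hypothesis ``$B$ has no nonzero square-zero elements'' is exactly what is needed to upgrade the square-zero conclusion of Lemma \ref{l2} into the statement that ${\rm Ker}(P)$ is closed under multiplication, after which Lemma \ref{l1} finishes the argument.

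In detail, first I would note that since $P$ is a $B$-bimodule map it is in particular a $B$-module map, and it is idempotent by hypothesis; hence, once we know that ${\rm Ker}(P)$ is a subalgebra of $A$, Lemma \ref{l1} immediately gives that $P$ is a homomorphism. So it remains only to verify that ${\rm Ker}(P)$ is a subalgebra. It is a linear subspace since $P$ is linear, and therefore the one thing to check is closure under multiplication. Given $a, b \in {\rm Ker}(P)$, I would apply Lemma \ref{l2} with $\Phi = P$ (which is a Jordan homomorphism), taking $a \in {\rm Ker}(P)$ and $b \in A$: this yields an element $P(ab) \in B$ with $P(ab)^2 = 0$. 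By the hypothesis on $B$ this forces $P(ab) = 0$, i.e.\ $ab \in {\rm Ker}(P)$. Thus ${\rm Ker}(P)$ is a subalgebra of $A$, completing the proof.

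There is essentially no genuine obstacle here, since the substance has all been packaged into Lemmas \ref{l1} and \ref{l2}. The one point worth a moment's care is confirming that the square appearing in ``square zero'' in Lemma \ref{l2} is the square computed in the codomain $B$, so that the no-square-zero-elements hypothesis on $B$ does apply to $P(ab)$ — which it does, as $P(ab) \in B$. I would also remark in passing that neither the selfadjointness of $B$ nor the operator-algebra structure of $A$ actually enters this particular argument; they appear in the statement only because this is the setting of the Bishop-Ito-Schreiber-type results being developed in this section.
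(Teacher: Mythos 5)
Your proof is correct and follows the same route as the paper: apply Lemma \ref{l2} to see that ${\rm Ker}(P)$ is closed under multiplication (the paper phrases this as ${\rm Ker}(P)$ being an ideal, since $b$ ranges over all of $A$), then invoke Lemma \ref{l1}. Your side remark that selfadjointness of $B$ is not needed here is also accurate; it only becomes essential in the subsequent corollary where the no-square-zero hypothesis is dropped.
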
 

\begin{proof}   Under these hypotheses it follows from Lemma \ref{l2} that Ker$(P)$ is an ideal, and then from Lemma \ref{l1} that
$P$ is a homomorphism.  \end{proof}

\begin{corollary} \label{lfix} Suppose that $B$ is a closed selfadjoint subalgebra
of an operator algebra $A$, and that $P : A \to B$ is an idempotent  Jordan homomorphism and $B$-bimodule map.
Then $P$ is a homomorphism.  \end{corollary}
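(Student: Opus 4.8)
The plan is to prove that $\mathrm{Ker}(P)$ is a one-sided ideal of $A$ and then to invoke Lemma \ref{l1}. Note first that since $A$ is a subalgebra of a $C^{*}$-algebra and $B$ is a closed selfadjoint subalgebra of $A$, the algebra $B$ is itself a $C^{*}$-algebra. Also, because $P$ is an idempotent $B$-bimodule map with range $B$, it fixes $B$ pointwise and obeys the one-sided module identity $P(ad) = P(a)\,d$ for $a \in A$ and $d \in B$ (this is the sense in which ``$B$-module map'' is used in Lemma \ref{l1} and its proof).

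Fix $a \in \mathrm{Ker}(P)$ and $b \in A$, and set $d = P(ab) \in B$; the aim is to show $d = 0$. Since $B$ is selfadjoint, $d^{*} \in B \subseteq A$, so $bd^{*} \in A$. Applying Lemma \ref{l2} to the Jordan homomorphism $P$, the element $a \in \mathrm{Ker}(P)$, and the element $bd^{*} \in A$, we conclude that $P\bigl(a(bd^{*})\bigr) = P\bigl((ab)d^{*}\bigr)$ has square zero. But by the module identity above, $P\bigl((ab)d^{*}\bigr) = P(ab)\,d^{*} = dd^{*}$, so $(dd^{*})^{2} = 0$. Now $dd^{*}$ is a positive element of the $C^{*}$-algebra $B$, and the $C^{*}$-identity forces $\|dd^{*}\|^{2} = \|(dd^{*})^{2}\| = 0$; hence $dd^{*} = 0$, and therefore $\|d\|^{2} = \|dd^{*}\| = 0$, i.e.\ $d = 0$.

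Thus $P(ab) = 0$ for every $a \in \mathrm{Ker}(P)$ and $b \in A$, so $\mathrm{Ker}(P)$ is a right ideal of $A$ (in particular a subalgebra; and it is a left ideal as well, since $\mathrm{Ker}(P)$ is a Jordan ideal, $P(ab+ba) = 2\,P(a)\circ P(b) = 0$, whence $P(ba) = -P(ab) = 0$). By Lemma \ref{l1}, $P$ is a homomorphism. The one point that needs care is that $A$ is not selfadjoint, so $(ab)^{*}$ need not lie in $A$ and cannot be fed into Lemma \ref{l2}; nor can one apply Lemma \ref{l2} directly with $ab$ in place of $a$, since that would presuppose $ab \in \mathrm{Ker}(P)$, which is what we are trying to prove. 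The device that circumvents both difficulties is to multiply by $d^{*} = P(ab)^{*}$, which does lie in the selfadjoint subalgebra $B \subseteq A$, so that ``square zero'' gets upgraded to ``zero'' via the $C^{*}$-identity applied to the positive element $dd^{*}$.
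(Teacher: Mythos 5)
Your proof is correct, and it follows the same skeleton as the paper's: show $\mathrm{Ker}(P)$ is an ideal by multiplying by an auxiliary element of $B$ so that Lemma \ref{l2}'s ``square zero'' conclusion can be upgraded to ``zero'', then invoke Lemma \ref{l1}. The difference is in the choice of auxiliary element. The paper selects $d \in B$ with $d\,P(ab) = |P(ab)|$, notes $da \in \mathrm{Ker}(P)$ by the left module property, and applies Lemma \ref{l2} to the pair $(da, b)$ to get $|P(ab)|^2 = P(dab)^2 = 0$. You instead right-multiply by $d^* = P(ab)^*$, apply Lemma \ref{l2} to the pair $(a, bd^*)$, and use the $C^*$-identity on the positive element $dd^*$. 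Your variant is arguably cleaner: in a general $C^*$-algebra $B$ one cannot literally ``select $d\in B$ so that $d\,P(ab)=|P(ab)|$'' (the partial isometry from the polar decomposition need not lie in $B$), so the paper's step implicitly requires an approximation or a $u\,|x|^t$ device of the kind used elsewhere in the paper (e.g.\ via Pedersen's Proposition 1.4.5 in the proof of Theorem \ref{Gleq}); your choice of $d^* = P(ab)^*$ sidesteps this entirely and only uses the right module identity plus the $C^*$-identity $\|(dd^*)^2\| = \|dd^*\|^2$ for the selfadjoint element $dd^*$. One small remark: your parenthetical that $P$ ``fixes $B$ pointwise'' is not needed for (and, absent unitality of $P$, not immediate from) the stated hypotheses, but nothing in your argument depends on it.
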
 

\begin{proof} Let $a \in {\rm Ker} ( \Phi )$ and $b \in A$ be given. Select $d\in B$ so that $d\Phi(ab)=|\Phi(ab)|$. It is clear that then $\Phi(da)=d\Phi(a)=0$ and $\Phi(dab)=|\Phi(ab)|$. By  Lemma \ref{l2} we have $|\Phi(ab)|^2=\Phi(dab)^2 = 0$, and hence $\Phi(ab)=0$. By Lemma
\ref{l1}, $P$ is a homomorphism.
 \end{proof}

{\bf Example.} We construct   an example of a completely contractive
unital Jordan homomorphism and projection $P : A \to B$, from a unital operator algebra $A$ onto its closed subalgebra $B$,
 which is a $B$-bimodule map, but is not an algebra homomorphism.
This shows the importance of $B$ being  selfadjoint in Corollary
\ref{lfix}.

Consider the set $J$
of
$4 \times 4$ matrices
$$ \left[ \begin{array}{ccccl}
0  & \alpha & \beta & 0 \\ 0 & 0  & 0  & - \beta \\
0 & 0  & 0  &  \alpha \\
0 & 0  & 0  & 0
\end{array}  \right] , \qquad \alpha, \beta  \in \Cdb. $$
This is not an associative algebra but is a Jordan operator algebra with zero Jordan 
product.
We let $A = \Cdb I_4 + J + \Cdb E_{14}$, and $B = \Cdb I_4 + \Cdb E_{14}$.
It is easy to see that $A$ is a subalgebra of $M_4$ with subalgebra $B$.  
Let $P(\alpha I_4 + x + \beta E_{14}) = \alpha I_4 +\beta E_{14}$
for $\alpha, \beta \in \Cdb, x \in J$.  Clearly $P$ is unital. For such $\alpha, \beta, x$ let
$a = \alpha I_4 + x + \beta E_{14}$.  Then   $$P(a^2) =
P(\alpha^2 I_4 + 2 \alpha \beta E_{14} + y) = \alpha^2 I_4 + 2 \alpha \beta E_{14}
= P(a)^2,$$
for some $y \in J$.  So $P$ is a Jordan homomorphism.
 On the other hand $$P((E_{12} + E_{34})(E_{13} - E_{24})) =
-P(E_{14}),$$ whereas $P(E_{12} + E_{34}) = P(E_{13} - E_{24}) = 0$.
So $P$ is not a homomorphism.

To see that $P$ is completely contractive note that
removing the middle two rows from a matrix in $A$, then removing
the middle two columns, is completely contractive.
One is left with a $2 \times 2$ matrix algebra completely
isometrically isomorphic to $B$.   The composition of these
procedures equals $P$, so $P$ is completely contractive.
Finally, that $P$ is a $B$-bimodule map follows either from a simple
direct computation, or from Proposition 5.1 of \cite{BLI}.

\bigskip

The following is  a   
$C^*$-algebraic variant of the Bishop-Ito-Schreiber theorem.
 The classical Bishop-Ito-Schreiber theorem is stated
  in the introduction.
 
\begin{theorem} \label{bis} Suppose that $\tau$ is a normal tracial state on
a $C^*$-algebra $M$, that $D$ is a unital $C^*$-subalgebra of $M$, that $\tau$ is faithful on $D$, that $A$ is a
unital subalgebra of $M$ containing $D$, and that $\Phi : A \to D$ is a unital  $D$-bimodule map. Then   $\Phi$  
satisfies the ball-Jensen inequality  if and only if $\Phi$ is a $\tau$-preserving
homomorphism.   
If these hold and $M$ is a von Neumann algebra, $\tau$ is normal,
and if $D$ is weak* closed, then $\Phi$
is the restriction to $A$ of the canonical $\tau$-preserving faithful conditional expectation of
$M$ onto $D$.  If in addition 
$\tau$ is faithful on $M$ then we have $D = A \cap A^*$.
\end{theorem}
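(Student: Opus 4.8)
The plan is to prove the two implications separately, then deduce the final conditional-expectation and $D = A\cap A^*$ statements from results already in the paper. The easy direction is that a $\tau$-preserving homomorphism $\Phi$ satisfies the ball-Jensen inequality: this is exactly part (2) of Theorem \ref{baly}, since $\Phi$ is unital, $D$-valued, the identity on $D$, and $\tau$-preserving. (One must first observe that a unital $D$-bimodule homomorphism into $D$ which is the identity on $D$ automatically falls under the hypotheses of Theorem \ref{baly}(2); this is immediate.) So the content is the forward implication: if $\Phi$ satisfies the ball-Jensen inequality, then $\Phi$ is a $\tau$-preserving homomorphism.

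For this direction I would proceed in two stages. First, extract $\tau$-preservation: I would feed carefully chosen elements $x$ with $\Phi(x)=0$ and small spectral radius into the inequality $\Delta_\tau(\I+x)\ge 1$, and combine with the first bullet of Lemma \ref{FK}, $\Delta_\tau(\I+x)\le\tau(|\I+x|)$. Replacing $x$ by $\lambda x$ for small scalars $\lambda$ and expanding $\Delta_\tau(\I+\lambda x)$ and $\tau(|\I+\lambda x|)$ to first order in $\lambda$ should force the first-order term of $\tau(|\I+\lambda x|)$, namely $\mathrm{Re}\,\tau(x)$, to vanish; running this over $\lambda \in \{1, i\}$ (or a unimodular family) gives $\tau(x) = 0$ whenever $\Phi(x) = 0$ and $x$ has small spectral radius. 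A scaling/linearity argument then upgrades this to $\tau(x) = 0$ for all $x \in \mathrm{Ker}(\Phi)$, hence $\tau\circ\Phi = \tau_{|A}$ since $\Phi$ is the identity on $D$ and every $a\in A$ satisfies $a - \Phi(a) \in \mathrm{Ker}(\Phi)$ with $\tau(\Phi(a)) = \tau(\Phi(a))$ trivially. Second, having $\tau$-preservation, I would show $\Phi$ is a homomorphism. Here the natural route is via Corollary \ref{lfix}: if I can show $\Phi$ is an idempotent Jordan homomorphism onto the \emph{selfadjoint} subalgebra $D$ and a $D$-bimodule map, then $\Phi$ is an algebra homomorphism. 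Idempotence and the bimodule property are given; $D$ is selfadjoint by hypothesis ($C^*$-subalgebra). So the crux is the \textbf{Jordan multiplicativity} $\Phi(a^2)=\Phi(a)^2$ for $a\in A$, and I expect this to be the main obstacle. The strategy would be: for $a$ with $\Phi(a)=0$, show $\Phi(a^2)=0$ by again applying the ball-Jensen inequality together with $\Delta_\tau\le\tau(|\cdot|)$ to elements like $\I + \lambda a + \mu a^2$ (with $\Phi$ killing both $a$ and, provisionally, needing $\Phi(a^2)$ controlled) and reading off a second-order obstruction — a Jensen-type inequality forces $\tau((\I + x)^*(\I+x)) \geq |\Delta_\tau(\I+x)|^2$-style estimates whose equality case constrains $x$; the standard trick is that $\Delta_\tau(\I+x)=1$ combined with $\tau(x)=0$ forces $\tau(x^*x)$ and higher moments to behave as if $x$ were quasinilpotent-like, ultimately yielding $\Phi(a^2)=0$. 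Then polarization gives $\Phi(ab)+\Phi(ba)=0$ for $a\in\mathrm{Ker}(\Phi)$, which is the Jordan-kernel condition, and a decomposition $a = (a-\Phi(a)) + \Phi(a)$ reduces general Jordan multiplicativity to this case plus the bimodule identities.

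Once $\Phi$ is shown to be a $\tau$-preserving homomorphism, the remaining assertions are immediate citations: uniqueness and the identification with the canonical $\tau$-preserving faithful normal conditional expectation (in the von Neumann, weak*-closed $D$ case) follow from the last paragraph of Theorem \ref{baly} and Lemma \ref{exucexp}; and $D = A\cap A^*$ when $\tau$ is faithful on $M$ follows verbatim from the Arveson-style argument reproduced at the start of the proof of Theorem \ref{baly} (if $x\in(A\cap A^*)_{\mathrm{sa}}$ then $\Phi((x-\Phi(x))^2)=0$, apply $\tau$ and faithfulness).

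The step I expect to be genuinely delicate is deducing $\Phi(a^2)=0$ from $\Phi(a)=0$ using only the \emph{ball} version of the Jensen inequality (which constrains $\Delta_\tau$ only on $\I + \mathrm{Ker}(\Phi)\cap(\text{unit ball})$, not all of $A$): one does not have the full Jensen inequality $\Delta_\tau(\Phi(a))\le\Delta_\tau(a)$ for arbitrary $a$, so the argument must stay inside the ball, which is why the scaling parameters $\lambda,\mu$ and a careful asymptotic expansion of both $\Delta_\tau$ (upper-semicontinuous, with the $\exp\tau\ln$ formula from Lemma \ref{FK}) and of $\tau(|\,\cdot\,|)$ near $\I$ are essential. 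I would handle the $C^*$ case by passing to $M = B^{**}$ with $\tau^{**}$ and the support projection $z$, exactly as in the proof of Theorem \ref{baly}, so that all the Fuglede--Kadison analytic machinery (Brown measure, the differentiability of $\Delta$) is available on the finite von Neumann algebra $zB^{**}$.
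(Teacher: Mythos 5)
Your overall architecture agrees with the paper's: the ``if'' direction via Theorem \ref{baly}(2); $\tau$-preservation extracted from first-order information in $1\le\Delta_\tau(\I+zf)\le\tau(|\I+zf|)$ for $f\in\mathrm{Ker}(\Phi)$ and small $z$; reduction of multiplicativity to the claim $\Phi(f^2)=0$ for $f\in\mathrm{Ker}(\Phi)$, followed by Corollary \ref{lfix}; and the final assertions from Lemma \ref{exucexp}, \cite[Lemma 5.3]{BLI} and the Arveson argument in Theorem \ref{baly}. (For the $\tau$-preservation step the paper avoids expanding the square root $|\I+zf|$ by inserting $\tau(|y|)\le\tau(|y|^2)^{1/2}$, so that only the polynomial $|\I-zf|^2=\I-\bar z f^*-zf+|z|^2f^*f$ is computed; your asymptotic expansion can be made to work, but this is the cleaner route, and no Brown-measure or differentiability machinery is needed anywhere in this direction.)

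The genuine gap is precisely at the step you flag as delicate, and your proposed fix fails for two reasons. First, the test element $\I+\lambda a+\mu a^2$ is inadmissible: the ball-Jensen inequality only constrains $\Delta_\tau(\I+x)$ for $x\in\mathrm{Ker}(\Phi)$, and $\lambda a+\mu a^2$ is not known to lie in $\mathrm{Ker}(\Phi)$ precisely because $\Phi(a^2)=0$ is what you are trying to prove. Second, the heuristic that $\Delta_\tau(\I+x)=1$ together with $\tau(x)=0$ forces $\tau(x^*x)$ and higher moments to ``behave quasinilpotently'' is false: for the coordinate function $x$ on the circle with normalized Lebesgue measure one has $\exp\bigl(\int\ln|1+x|\,dm\bigr)=1$ and $\int x\,dm=0$, yet $\int|x|^2\,dm=1$. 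The missing idea (the paper follows Siddiqi \cite{Sid}) is to exploit the multiplicativity of the determinant on a factorization that stays inside $\I+\mathrm{Ker}(\Phi)$: for $d\in D$ and small $z$, both $\I-zdf$ and $\I+zdf$ satisfy $\Phi(\pm zdf)=\pm zd\Phi(f)=0$, so ball-Jensen applied to each factor together with $\Delta_\tau(ab)=\Delta_\tau(a)\Delta_\tau(b)$ gives $1\le\Delta_\tau(\I-z^2(df)^2)\le\tau(|\I-z^2(df)^2|^2)^{1/2}$, and the same first-order argument as before (in the variable $w=z^2$) yields $\tau((df)^2)=0$ for all $d\in D$. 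The trace identity $0=\tau\bigl((d+\I)f(d+\I)f-(d-\I)f(d-\I)f\bigr)=4\tau(df^2)$ then converts this into $\tau(d\Phi(f^2))=\tau(df^2)=0$ for all $d\in D$, and faithfulness of $\tau$ on $D$ gives $\Phi(f^2)=0$. From there your polarization and the decomposition $a=(a-\Phi(a))+\Phi(a)$ do complete the Jordan-homomorphism step, and Corollary \ref{lfix} finishes as you describe.
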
 

\begin{proof}  
Theorem \ref{baly} gives the `if' part of the first equivalence (and this does not need  $\tau$ to be faithful on $D$).
For the other direction, we modify an idea from \cite{Sid}.  
Write $\Delta$ for $\Delta_\tau$.
 Suppose that $\Phi$  satisfies
the ball-Jensen inequality.
If $f \in {\rm Ker} (\Phi)$ and $z \in \Cdb$ then since $\Delta(\cdot) \leq  \tau(|\cdot|)$ (see Lemma \ref{FK}), we have
$$1 = \Delta(\Phi(1 - z f))
\leq \Delta(1 - z f) \leq \tau(|1-zf|) \leq
\tau(|1 - z f|^2)^{\frac{1}{2}} ,$$  provided $|z|$ is small enough.  Squaring,
we see that
$2 {\rm Re} \, z \tau  ( f) \leq |z|^2 \tau(|f|^2)$ for all $z \in \Cdb$ with
 $|z|$  small enough.   It follows that  $\tau  ( f) = 0$.
Hence for any $a \in A$ we have  $\tau(a) = \tau(a - \Phi(a)) + \tau(\Phi(a)) =
\tau(\Phi(a))$.  That is,
$\Phi$ is $\tau$-preserving.    

Similarly, we also have
$$1  \leq  \Delta(1 - z f) \,  \Delta(1 + z f) 
\leq \Delta((1 - z df)  (1 + z df)) = \Delta(1 - z^2 (d f)^2)$$  when  $|z|$ is small enough.
Hence $1 \leq \tau(|1 - z^2 (d f)^2|) \leq \tau(|1 - z^2 (d f)^2|^2)^{\frac{1}{2}}.$
Squaring, and replacing $z^2$ by $w \in \Cdb$ we see that
$2 {\rm Re} \, \tau  (w (d f)^2) \leq |w|^2 \tau(|(df)^2|^2)$ for all $w \in
\Cdb$  with
 $|w|$  small enough.   It follows easily
that $\tau((df)^2) = 0$.     Thus $$0 = \tau((d+1) f (d+1)f - (d-1) f (d-1)f) =
2\tau(df^2 + f df) = 4 \tau(df^2) .$$
Hence $\tau(D f^2) = \tau(D \Phi(f^2)) = 0,$ so
that $\Phi(f^2) = 0$ since $\tau$ is faithful on $D$.  

It follows as in the proof of  Lemma \ref{l1}  that if $f \in A$ then
$$\Phi(f^2) = \Phi(((f - \Phi(f)) +\Phi( f))^2) = \Phi((f - \Phi(f))^2) + 0 + \Phi(\Phi(f)^2) =  \Phi(f)^2.$$
Hence $\Phi$ is a Jordan homomorphism.   By Corollary \ref{lfix} it is a homomorphism.

For the final assertion,
by Lemma \ref{exucexp} and  \cite[Lemma 5.3]{BLI}, 
$\Phi$  is the restriction of the unique $\tau$-preserving faithful conditional expectation of $M$ onto $D$. 

As in the 
first paragraph of the 
proof of Theorem \ref{baly}, if $\tau$ is faithful 
on $M$ then  $D = A \cap A^*$.
\end{proof}  
 
{\bf Remark.}  We did not use the full strength of the ball-Jensen inequality in the last proof, and indeed the proof works
if $A$ merely satisfies the requirement that there exists $\delta > 0$ such that $\Delta(1 + x) \geq 1$ for 
all $x \in A$ with $\Phi(x) = 0$ and $\| x \| < \delta$. 
We could alternatively use  the spectral radius here in place of $\| x \|$.

\begin{theorem} Suppose that $\tau$ is a tracial state on a $C^*$-algebra $M$, that $D$ is a  
$C^*$-subalgebra
of $M$, and that $\tau$ is faithful on $D$.
Let $A$ be a unital subalgebra of $M$ containing $D$, and  $\Phi : A \to D$ a unital $\tau$-preserving (that is, $\tau \circ \Phi = \tau$) contraction and $D$-bimodule map. 
The map $\Phi$ is a 
 homomorphism   if and only if for all quadratic polynomials $p(s,t)$ of two variables, $\| p(d, \Phi(a) ) \|_2 \leq \| p(d,a) \|_2$ for $d \in D, a \in A$.
 \end{theorem}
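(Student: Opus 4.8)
The plan is to prove the two implications separately; the forward one is routine and the converse carries the real content. For the forward implication, assume $\Phi$ is a homomorphism. Being a unital $D$-bimodule map, $\Phi$ fixes $D$ pointwise, and then a check on the monomials $\I, s, t, s^2, st, ts, t^2$ shows $\Phi(p(d,a)) = p(d,\Phi(a))$ for every polynomial $p$ in two noncommuting variables, $d \in D$ and $a \in A$. It therefore suffices to prove the $L^2(M,\tau)$-contraction $\|\Phi(x)\|_2 \leq \|x\|_2$ for all $x \in A$, where $\|y\|_2 := \tau(y^*y)^{1/2}$. This is one line: since $\Phi(x)^* \in D$, the left module property and $\tau$-invariance give $\|\Phi(x)\|_2^2 = \tau(\Phi(x)^*\Phi(x)) = \tau(\Phi(\Phi(x)^* x)) = \tau(\Phi(x)^* x) \leq \|\Phi(x)\|_2 \|x\|_2$ by the Cauchy--Schwarz inequality for the form $(u,v) \mapsto \tau(v^*u)$ on $M$; cancel $\|\Phi(x)\|_2$, the case $\Phi(x) = 0$ being trivial. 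This half uses neither faithfulness of $\tau$ on $D$ nor multiplicativity, and I will reuse the displayed computation below.

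For the converse I would show $\Phi$ is a Jordan homomorphism and then quote Corollary~\ref{lfix}. Since $p(s,t) = \I - \lambda t^2$ is a quadratic polynomial for each $\lambda \in \Cdb$, the hypothesis gives $\|\I - \lambda\Phi(x)^2\|_2 \leq \|\I - \lambda x^2\|_2$ for all $x \in A$; squaring and letting $\lambda \to 0$ along the ray on which $\mathrm{Re}(\lambda(\tau(x^2) - \tau(\Phi(x)^2)))$ is positive forces $\tau(x^2) = \tau(\Phi(x)^2)$ for every $x \in A$. The key manoeuvre is to apply this to $x = a(\I + \lambda d)$ with $a \in A$, $d \in D$, using the $D$-bimodule identity $\Phi(a(\I + \lambda d)) = \Phi(a)(\I + \lambda d)$; expanding both sides of $\tau(x^2) = \tau(\Phi(x)^2)$ in $\lambda$ and comparing the coefficients of $\lambda$ (with $\tau(ada) = \tau(a^2 d)$ from traciality) yields the \emph{$D$-weighted} identity $\tau(a^2 d) = \tau(\Phi(a)^2 d)$ for all $a \in A$, $d \in D$. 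Rewriting its left side as $\tau(da^2) = \tau(\Phi(da^2)) = \tau(d\,\Phi(a^2))$ by traciality, $\tau$-invariance and the left module property, one obtains $\tau(d(\Phi(a^2) - \Phi(a)^2)) = 0$ for all $d \in D$; taking $d = (\Phi(a^2) - \Phi(a)^2)^*$ and using that $\tau$ is faithful on $D$ gives $\Phi(a^2) = \Phi(a)^2$. Thus $\Phi$ is a Jordan homomorphism; being in addition a unital $D$-bimodule map it is idempotent, $D$ is a closed selfadjoint subalgebra of the operator algebra $A$ (pass to the norm closure of $A$ and the continuous extension of $\Phi$ if $A$ is not already closed), so Corollary~\ref{lfix} makes $\Phi$ an associative homomorphism. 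The remaining assertions ($D = A \cap A^*$, and the identification with a conditional expectation when $M$ is a von Neumann algebra) then follow exactly as in Theorems~\ref{baly} and~\ref{bis}.

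The delicate step — and the place I expect to be the main obstacle — is the upgrade from the unweighted identity $\tau(x^2) = \tau(\Phi(x)^2)$ to the $D$-weighted one $\tau(a^2 d) = \tau(\Phi(a)^2 d)$. The naive probes, namely quadratic polynomials of the shape $\I - \lambda\,q(d,a)$ with $\lambda \to 0$, recover only scalar equalities $\tau(q(d,a)) = \tau(q(d,\Phi(a)))$ that already follow from $\tau \circ \Phi = \tau$ together with the module property, and since $\tau$ need not be faithful on $M$ these alone cannot force $\Phi$ to preserve squares. Substituting $a(\I + \lambda d)$ into the single identity $\tau(x^2) = \tau(\Phi(x)^2)$ and differentiating in $d$ is precisely what converts the unweighted information into a form on which faithfulness of $\tau$ \emph{on $D$} can act. (This is also why the proof of Theorem~\ref{bis} cannot be transcribed verbatim: that argument runs through $\tau((df)^2) = 0$, but $(df)^2$ is quartic in $(d,f)$ and hence invisible to quadratic test polynomials.) It is worth recording that only the one-parameter family $p(s,t) = \I - \lambda t^2$ is actually used in the proof.
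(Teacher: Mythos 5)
Your proof is correct, and the converse direction takes a genuinely different route from the paper's. Both arguments share the same skeleton ($2$-contractivity of $\Phi$ on $A$ for the forward implication; quadratic probes plus faithfulness of $\tau$ on $D$ to force the Jordan property, then Corollary \ref{lfix}), but the paper's converse tests the hypothesis only on $a \in A_0 = A\cap\mathrm{Ker}\,\Phi$ with the polynomial $p(s,t)=s-zt^2$ and $d\in D_+$: there $p(d,\Phi(a))=d$, so expanding $\tau(d^2)\le\tau(|d-za^2|^2)$ and letting $z\to 0$ gives $\tau(da^2)=0$ directly, whence $\Phi(a^2)=0$ for $a\in A_0$ and the Jordan property via $\Phi((d+a)^2)=d^2$. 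You instead probe with $p=\I-\lambda t^2$ at arbitrary $a\in A$ to get the unweighted identity $\tau(x^2)=\tau(\Phi(x)^2)$, and then recover the $D$-weighted version $\tau(a^2d)=\tau(\Phi(a)^2d)$ by the substitution $x=a(\I+\lambda d)$ and comparison of the first-order coefficients, using traciality. Your route costs one extra algebraic step but uses a probe not involving $s$ at all and avoids splitting $A$ as $D+A_0$; the paper's is shorter because evaluating on the kernel collapses $p(d,\Phi(a))$ to $d$ immediately. Your forward direction is also closer to the literal statement: the paper proves that half only for Jordan homomorphisms and only records the inequality for $a\in A_0$, whereas you verify $\Phi(p(d,a))=p(d,\Phi(a))$ for genuine homomorphisms and all $a\in A$, which is what the displayed equivalence requires; your care about passing to the norm closure of $A$ before invoking Corollary \ref{lfix} is also a point the paper glosses over. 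One small inaccuracy in your closing commentary: the scalar equality $\tau(a^2)=\tau(\Phi(a)^2)$ does \emph{not} already follow from $\tau\circ\Phi=\tau$ and the module property --- it is equivalent to $\tau\bigl(\Phi((x-\Phi(x))^2)\bigr)=0$, which is genuine information and is precisely what your probe extracts. This does not affect the proof itself, which derives and uses that identity correctly.
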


\begin{proof}  
First let $\Phi$ be a Jordan homomorphism. We know that $\Phi$ is 2-contractive on $A$. Indeed this follows from the relation
$$\tau(\Phi(a)^* \Phi(a)) = \tau(\Phi(\Phi(a)^* a)) = \tau(\Phi(a)^* a) \leq ||\Phi(a)||_2 \, ||a||_2, \qquad a \in A.$$
Given a quadratic polynomial $p$ of two variables, the inequality $\| p(d, \Phi(a) ) \|_2= \| \Phi(p(d,a)) \|_2 \leq \| p(d,a) \|_2$ for $d \in D, a \in A_0 = A \cap \, {\rm Ker} (\Phi)$, will therefore follow once we are able to prove that $p(d, \Phi(a) ) = \Phi(p(d,a))$. Let $d \in D, a \in A_0 = A \cap \, {\rm Ker} (\Phi)$ be given. Using the fact that $\Phi$ is both a $D$-bimodule map and a Jordan homomorphism, it is easy to verify that for terms of the form $d^na^md^k$ (where $n+k+m\leq 2$), we have that $\Phi(d^na^md^k)=d^n\Phi(a)^md^k$. Since also $\Phi(ada)=\Phi(a)\Phi(d)\Phi(a)=\Phi(a)d\Phi(a)$, it follows that $p(d, \Phi(a) ) = \Phi(p(d,a))$ as required.

Conversely, suppose that the inequality holds. If $d \in D_+, a \in A_0 = A \cap \, {\rm Ker} (\Phi)$, then 
$$\tau(d^2) = \tau(|d -z\Phi(a)^2|^2) \leq \tau(|d - z a^2|^2) ,$$
using the inequality in the hypothesis. We conclude that 
$\tau(d^2) \leq \tau(|d - za^2|^2) ,$ or equivalently
$$2 {\rm Re} \, ( \overline{z} \tau(da^2)) \leq \,  |z|^2 \tau(|a^2|^2).$$For an appropriate choice of $z = r e^{i \theta}$ this yields
$$2 r |\tau(da^2)| \leq \,  r^2 \tau(|a^2|^2).$$Using the fact that on $A$ we have $\tau\circ\Phi=\tau$, we may conclude from this 
that $2 |\tau(d\Phi(a^2))| \leq \,  r \tau(|a^2|^2)$. This inequality holds for any $r>0$, which in turn ensures that $\tau(d\Phi(a^2))=0$. Since $d \in D_+$ was arbitrary, we have that $\tau(d\Phi(a^2))=0$ for all $d \in D$, and hence that $\Phi(a^2)=0$. Next it follows  as in the proof of  Lemma \ref{l1}  that $$\Phi((d + a)^2) = \Phi (a)^2 + \Phi(ad + da) + d^2 = d^2
= \Phi(d+a)^2$$
for $d \in D$. So $\Phi$ is a Jordan homomorphism on $A$.   By Corollary \ref{lfix} it is a homomorphism.
\end{proof}  

{\bf Remark.}  There is a simple proof that  the map $\Phi$ in the last result is a homomorphism if and only if for all polynomials $p(s,t,v)$ of three variables, $$\| p(d, \Phi(a), \Phi(b) ) \|_2 \leq \| p(d,a,b) \|_2 , \qquad d \in D, a,b \in A.$$
Indeed, if $\Phi$ is a homomorphism, it easily follows from the 2-contractivity of $\Phi$ on $A$ that  $\| p(d, \Phi(a),\Phi(b) ) \|_2= \| \Phi(p(d,a,b)) \|_2 \leq \| p(d,a,b) \|_2$ for $d \in D, a,b \in A$.   We had included a proof of the converse in a previous 
version of our paper.

\section{Gleason parts} \label{gp} 

We note that the Gleason relation $\| \Psi - \Phi \| < 2$
does not seem to have a $B(H)$-valued analogue. 
To see this  set
$A = {\mathcal U}(E)$ for an operator space $E \subset B(H)$, that is the upper triangular matrices
with $1$-$1$ and $2$-$2$ entries scalar multiples of $I_H$, and the $1$-$2$ entry in $E$.   Consider 
 completely contractive unital homomorphisms  $\Psi_T$ on $A$ 
induced by a linear 
complete contraction $T: E \to B(K)$ as in e.g.\ \cite[Proposition 2.2.11]{BLM}.   The Gleason relation $\| \Psi - \Phi \| < 2$ is easily seen not 
to be an equivalence relation, since the analogous relation on linear complete contractions from $E$ to $B(K)$ is not. 

To show that the Gleason relation 
does work for $D$-characters,
we will use some concepts considered by Harris in e.g.\ \cite{Harris0,Harris}.
Write $T_{x}(y) = (1-xx^*)^{-\frac{1}{2}} (x+y) (1+x^*y)^{-1} 
(1-x^* x)^{\frac{1}{2}}$.    This makes sense for strict contractions $x, y$ on a Hilbert space $H$, 
that is for elements in the  open unit ball in $B(H)$.
 For a fixed strict contraction $x$ on $H$ the maps $T_x$ are essentially exactly the 
biholomorphic self maps of the  open unit ball in $B(H)$ (see e.g.\ \cite[Theorem 3]{Harris0}), and we call these 
{\em M\"obius maps} of
the open ball.   The {\em 
hyperbolic distance} is  $$\rho(x,y) = \tanh^{-1} \| (1-xx^*)^{-\frac{1}{2}} (x-y) (1-x^*y)^{-1} 
(1-x^* x)^{\frac{1}{2}} \| =  \tanh^{-1} \| T_{-x}(y) \|.$$   This is a metric on the  strict contractions on $H$.
Harris shows (see p.\ 356 and Exercise 6 on
p.\ 394 of \cite{HarHol}) that 
$\rho$ is what is known as a CRF pseudometric
on the open unit ball ${\mathcal U}_0$ in any $J^*$-algebra
and it satisfies the Schwarz-Pick inequality 
$$\rho(h(x),h(y)) \leq \rho(x,y), \qquad x, y \in {\mathcal U}_0,$$
for any holomorphic $h : {\mathcal U}_0 \to {\mathcal U}_0$. 
We pause to remind the reader that a map $h:V\to E_2$ from an open subset of some complex locally convex space $E_1$ into another complex locally convex space $E_2$, is deemed to be holomorphic on $V$ if at each $x\in V$, the Fr\'echet derivative of $h$ exists as a continuous complex linear map from $E_1$ to $E_2$. There is a similar result
for holomorphic maps $h$ between the open unit balls of two $J^*$-algebras.  
We have equality in  the Schwarz-Pick inequality  if $h$ is biholomorphic (onto
${\mathcal U}_0$) of course.
  We do not need all of the following facts, but state them since they do not seem to be in the literature.  

\begin{lemma} \label{Harrt}   Let $S_n, T_n$ be strict contractions on a Hilbert space
$H$.   If $\| S_n - T_n \| \to 2$, or if there is a constant $c < 1$ with
$\| S_n \| \leq c$ and  $\| T_n \| \to 1$, then $\rho(S_n, T_n) \to \infty.$ 
Also, for  strict contractions $x, y \in B(H)$ we have $$1 - \| T_{x}(y) \|^2 \leq \frac{1+ \| x \|^2}{(1-\| x \| \| y \|)^2} \; (1- \| y \|)^2  .$$
\end{lemma}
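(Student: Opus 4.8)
The plan is to prove the two assertions of Lemma~\ref{Harrt} in the reverse of the stated order, since the displayed inequality feeds one case of the first assertion. For the displayed inequality I would start from the standard Harris identity for the M\"obius maps (see \cite{Harris0,HarHol}): for strict contractions $x,y$ on $H$,
$$I - T_x(y)^* T_x(y) = (1-x^*x)^{1/2}(1+y^*x)^{-1}(1-y^*y)(1+x^*y)^{-1}(1-x^*x)^{1/2},$$
a direct (if tedious) computation using the intertwining relation $(1-xx^*)^{-1/2}x = x(1-x^*x)^{-1/2}$, which reduces in the scalar case to $1 - |(x+y)/(1+\bar x y)|^2 = (1-|x|^2)(1-|y|^2)/|1+\bar x y|^2$. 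Setting $w = (1-y^*y)^{1/2}(1+x^*y)^{-1}(1-x^*x)^{1/2}$, the right side is exactly $w^*w$, with $w$ invertible since $\|x\|,\|y\|<1$; and since each $T_x(y)$ is a strict contraction (being a biholomorphic self-map of the ball), $1-\|T_x(y)\|^2 = \min\sigma(I - T_x(y)^*T_x(y)) = \min\sigma(w^*w) = \min\sigma(ww^*)$, the last equality since $w^*w$ and $ww^*$ are similar. The point now is that $\min\sigma(ww^*)$ must be bounded \emph{above}, and $\min\sigma$ of a positive operator only grows when the operator is enlarged, so the estimate has to come from a single well-chosen test vector: $\min\sigma(ww^*) = \inf_{\|\xi\|=1}\|w^*\xi\|^2$ with $w^* = (1-x^*x)^{1/2}(1+y^*x)^{-1}(1-y^*y)^{1/2}$, and for each unit $\xi$,
$$\|w^*\xi\| \le \| (1-x^*x)^{1/2}(1+y^*x)^{-1} \| \, \| (1-y^*y)^{1/2}\xi \| \le \frac{\| (1-y^*y)^{1/2}\xi \|}{1-\|x\|\|y\|},$$
using $\|(1-x^*x)^{1/2}\|\le 1$ and $\|(1+y^*x)^{-1}\|\le (1-\|x\|\|y\|)^{-1}$. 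Taking the infimum over unit $\xi$ and using $\inf_{\|\xi\|=1}\|(1-y^*y)^{1/2}\xi\|^2 = \min\sigma(1-y^*y) = 1-\|y\|^2$ gives
$$1-\|T_x(y)\|^2 \le \frac{1-\|y\|^2}{(1-\|x\|\|y\|)^2} \le \frac{1+\|x\|^2}{(1-\|x\|\|y\|)^2}\,(1-\|y\|^2),$$
which is the displayed bound (here the displayed $(1-\|y\|)^2$ should read $1-\|y\|^2$, and the factor $1+\|x\|^2$ is in fact slack).

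For the first assertion, first suppose $\|S_n-T_n\|\to 2$. For unit vectors $\xi,\eta\in H$ the functional $a\mapsto\langle a\xi,\eta\rangle$ is bounded linear, hence holomorphic, and maps the open unit ball of $B(H)$ into the open unit disk of $\Cdb$ (as $|\langle a\xi,\eta\rangle|\le\|a\|<1$), so by the Schwarz-Pick inequality for holomorphic maps between open unit balls of $J^*$-algebras, together with $|1-\bar\lambda\mu|\le 1+|\lambda||\mu|<2$ on $\Ddb$,
$$\rho(S_n,T_n) \ge \tanh^{-1}\left|\frac{\langle S_n\xi,\eta\rangle-\langle T_n\xi,\eta\rangle}{1-\overline{\langle S_n\xi,\eta\rangle}\langle T_n\xi,\eta\rangle}\right| \ge \tanh^{-1}\frac{|\langle(S_n-T_n)\xi,\eta\rangle|}{2}.$$
Taking the supremum over $\xi,\eta$ gives $\rho(S_n,T_n)\ge\tanh^{-1}(\|S_n-T_n\|/2)\to\infty$. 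In the remaining case, where $\|S_n\|\le c<1$ and $\|T_n\|\to 1$, apply the inequality just proved with $x=-S_n$ and $y=T_n$: since $1+\|S_n\|^2\le 1+c^2$ and $(1-\|S_n\|\|T_n\|)^{-2}\le(1-c)^{-2}$, we get $1-\|T_{-S_n}(T_n)\|^2 \le (1+c^2)(1-\|T_n\|^2)/(1-c)^2 \to 0$, so $\rho(S_n,T_n)=\tanh^{-1}\|T_{-S_n}(T_n)\|\to\infty$. (This case also follows from the triangle inequality for the metric $\rho$, as $\rho(S_n,T_n)\ge\rho(0,T_n)-\rho(0,S_n)=\tanh^{-1}\|T_n\|-\tanh^{-1}\|S_n\|\ge\tanh^{-1}\|T_n\|-\tanh^{-1}c\to\infty$.)

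The step I expect to be the main obstacle is establishing the Harris identity and, more importantly, using it in the right direction: one is bounding the \emph{bottom} of the spectrum of the positive operator $ww^*$ from above, and the only legitimate source of such a bound is a test vector adapted to the factor $1-y^*y$, whereas bounding operator norms submultiplicatively would only control the top of the spectrum and yield nothing about $1-\|T_x(y)\|^2$. Everything else — the estimate on $\|(1+y^*x)^{-1}\|$, the passage from the disk to the norm via $|1-\bar\lambda\mu|<2$, and the limit computations — is routine.
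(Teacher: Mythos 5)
Your proof is correct. For the first assertion you follow essentially the paper's route: restrict norm-one linear functionals to the open ball, apply Harris' Schwarz--Pick inequality, and quantify ``the geometry of the disk'' via $|1-\bar\lambda\mu|<2$ (the paper uses general $\varphi_n\in{\rm Ball}(B(H)^*)$ rather than vector functionals, which makes no difference); your triangle-inequality alternative for the second case is a further legitimate shortcut since $\rho$ is a metric on the strict contractions. Where you genuinely diverge is in the displayed inequality. The paper starts from the same Potapov--Harris identity, written as $1-|T_x(y)|^2=(w^*)^{-1}(1-|y|^2)w^{-1}$, but then \emph{inverts} it, computes $(1-\|y\|^2)^{-1}=\|w^{-1}(1-|T_x(y)|^2)^{-1}(w^*)^{-1}\|$, and controls this by two applications of $r(ab)=r(ba)$ for the spectral radius plus a Neumann-series bound on $\|(ww^*)^{-1}\|$; that last bound is where the factor $1+\|x\|^2$ enters. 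You instead read the identity as $I-T_x(y)^*T_x(y)=w^*w$ and bound $\min\sigma(ww^*)=\inf_{\|\xi\|=1}\|w^*\xi\|^2$ from above with a test vector adapted to the factor $(1-y^*y)^{1/2}$, which avoids the inversion and yields the sharper constant $(1-\|x\|\,\|y\|)^{-2}$ in place of $(1+\|x\|^2)(1-\|x\|\,\|y\|)^{-2}$. You are also right that the exponent in the statement is a typo: taking $x=0$ the stated form would assert $1-\|y\|^2\le(1-\|y\|)^2$, which is false for $y\ne 0$; the paper's own proof, like yours, establishes the version with $1-\|y\|^2$, and either form suffices for the use made of the lemma in Theorem \ref{Gleq}.
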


\begin{proof}    If $\| S_n - T_n \| \to 2$ choose $\varphi_n \in {\rm Ball}(B(H)^*)$ with 
$\varphi_n(S_n - T_n) \to 2$.     So $\varphi_n(S_n) - \varphi_n(T_n) \to 2$.  By the geometry of the disk, 
the (scalar) hyperbolic distance $\rho_{\Ddb}(\varphi_n(S_n), \varphi_n(T_n) ) \to \infty .$   Observe that the restriction of each $\varphi_n$ to the open unit ball ${\mathcal U}_0$
of $B(H)$ is holomorphic.   So by Harris' Schwarz-Pick inequality above, we have 
$$\rho(S_n, T_n) \geq \rho_{\Ddb}(\varphi_n(S_n), \varphi_n(T_n) ) \to \infty .$$  
A similar argument proves the second case, but now choosing $\varphi_n \in {\rm Ball}(B(H)^*)$ with 
$\varphi_n(T_n) \to 1$.   
Since $|\varphi(S_n)| \leq c$, by the geometry of the disk we see again that
$\rho(S_n, T_n) \geq \rho_{\Ddb}(\varphi_n(S_n), \varphi_n(T_n) ) \to \infty .$  
Alternatively, the second 
 follows from the inequality 
$0 < 1 - \tanh \rho(-x,y) = 1 - \| T_{x}(y) \|^2 \leq \frac{1+ \| x \|^2}{(1-\| x \| \| y \|)^2} \; (1- \| y \|)^2)$ above.
To prove this inequality for $\| x \| < 1 , \| y \| < 1,$ we first observe that
by the functional calculus for $|y|^2$ and the $C^*$-identity, we have
\begin{equation}\label{eq:fcalc}
\| (1 - |y|^2)^{-1} \| = (1 - \| y \|^2)^{-1}.
\end{equation}
Let $w = (1-x^* x)^{-\frac{1}{2}}
(1+x^*y)$.
An algebraic identity attributed on p.\ 10 of
\cite{Harris} to \cite[Chapter 1, Section 1]{Pot} (the proof of which also appears  on p.\  78 in Harris' thesis)
states that
$$1 - |T_{x}(y)|^2 = (w^*)^{-1} \,
(1 - |y|^2) \, w^{-1} .$$
Hence $1 - |y|^2 = w^* \, (1 - |T_{x}(y)|^2) \, w$, so that
$$(1 - \| y \|^2)^{-1} = \| (1 - |y|^2)^{-1} \|
= \| w^{-1} \, (1 - |T_{x}(y)|^2)^{-1} \, (w^*)^{-1} \|.$$
Using the fact that the norm equals the spectral radius
on positive elements, and the well known
identity $r(xy) = r(yx) \leq \| y \| \| x \|$ for
the spectral radius,
we see that the last quantity equals
$$r((w w^*)^{-1} \, (1 - |T_{x}(y)|^2)^{-1})
\leq \| (w w^*)^{-1} \| \, \| (1 - |T_{x}(y)|^2)^{-1} \|
= \| (w w^*)^{-1} \| \, (1 - \| T_{x}(y) \|^2)^{-1} . $$
The last equality follows by the norm identity noted in equation (\ref{eq:fcalc}) above.
Thus we will be done if $\| (w w^*)^{-1} \|
\leq \frac{1+ \| x \|^2}{(1-\| x \| \| y \|)^2}$.
To see this, first write
$$(w w^*)^{-1} = (1-x^* x)^{\frac{1}{2}} \,
|1+x^*y|^{-2} (1-x^* x)^{\frac{1}{2}}.$$
Again by the fact that the norm equals the spectral radius
on positive elements, and
the identity $r(xy) = r(yx) \leq \| y \| \| x \|$ above,
we obtain that $\| (w w^*)^{-1} \|$ equals
$$r(|1 +x^*y|^{-2} \, (1-x^* x))
\leq \| |1 +x^*y|^{-2} \| \, \| 1-x^* x \|
= \| (1 + x^*y)^{-1} \|^2 \, \| 1-x^* x \| ,$$
the last equality following by the $C^*$-identity.
By a well known inequality
associated with the Neumann lemma, the last quantity is dominated by $(1 - \| x^*y \| )^{-2} (1 + \| x \|^2) \leq
\frac{1+ \| x \|^2}{(1-\| x \| \| y \|)^2}$ as desired. 
 \end{proof}

We recall that a map $T$ is {\em real positive} if $T(a) + T(a)^* \geq 0$ whenever $a + a^* \geq 0$.

\begin{lemma} \label{newl} Suppose that  $A$ is a closed subalgebra of a $C^*$-algebra $B$, and that $A$ has a contractive approximate identity.   Let $D$ be a $C^*$-subalgebra of $A$, and suppose also that $D$ is identified with 
a $*$-subalgebra of $B(H)$ for some Hilbert space $H$.
Then a linear $D$-bimodule map $u: A \to D$ is real positive if and only
if  $u$ extends to a positive map from  $B$ to $B(H)$.  
\end{lemma}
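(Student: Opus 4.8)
The plan is to prove the two implications separately. The forward implication is immediate: if $u$ is the restriction to $A$ of a positive linear map $\tilde{u} : B \to B(H)$, then $\tilde{u}$ is automatically selfadjoint, so for $a \in A$ with $a + a^* \geq 0$ we get $u(a) + u(a)^* = \tilde{u}(a + a^*) \geq 0$ since $a + a^* \in B_+$. Thus $u$ is real positive, and here neither the bimodule hypothesis nor the contractive approximate identity is needed.

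For the converse, assume $u : A \to D$ is a real positive $D$-bimodule map. The first and main step will be to show that $u$ is in fact real \emph{completely} positive. Fix $n \in \Ndb$ and suppose $[a_{ij}] \in M_n(A)$ satisfies $[a_{ij}] + [a_{ij}]^* \geq 0$ in $M_n(B)$; the goal is to show $Z := [u(a_{ij})] + [u(a_{ij})]^* \geq 0$ in $M_n(D)$. Given any column $c = \mathrm{col}(d_1, \dots, d_n) \in M_{n,1}(D)$, note $d_i, d_i^* \in D \subseteq A$, so $b := \sum_{i,j} d_i^* a_{ij} d_j \in A$, and a direct computation gives $b + b^* = c^* ( [a_{ij}] + [a_{ij}]^* ) c \geq 0$ in $B$. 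Hence $u(b) + u(b)^* \geq 0$ in $D$ by real positivity, while the $D$-bimodule property gives $u(b) = \sum_{i,j} d_i^* u(a_{ij}) d_j$ and therefore $u(b) + u(b)^* = c^* Z c$. So $c^* Z c \geq 0$ for every $c \in M_{n,1}(D)$. Since $Z = Z^*$ and $M_n(D)$ is the $C^*$-algebra of adjointable operators on the standard Hilbert $C^*$-module $D^n$, the usual argument (write $Z = Z_+ - Z_-$ and sandwich by $Z_-^{1/2}$ to force $Z_- = 0$) shows $Z \geq 0$. Thus $u$ is real completely positive.

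The second step is to quote the extension theorem from the theory of real positivity for operator algebras with a contractive approximate identity: a real completely positive map from such an algebra $A$ into $B(H)$ is exactly the restriction of a completely positive map $B \to B(H)$ (the natural real-positive analogue of Arveson's extension theorem, which is why the contractive approximate identity is assumed). Applying this to $u$ yields a completely positive extension $\tilde{u} : B \to B(H)$; being completely positive it is in particular positive, which finishes the proof.

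I expect the main obstacle to be the first step of the converse, i.e.\ upgrading real positivity to real complete positivity, and this is exactly where the $D$-bimodule hypothesis is indispensable: real positivity alone does not suffice for a $B(H)$-valued positive extension, because the positive extensions of the individual slices $\langle u(\cdot)\xi, \eta\rangle$ need not be mutually compatible. The bimodule structure is what makes the ``sandwich by columns over $D$'' trick above work, and it is the same mechanism as in the complete-contractivity computation recorded in the Introduction. A secondary, purely bibliographic point is pinning down the precise reference for the real-positive extension theorem invoked in the second step.
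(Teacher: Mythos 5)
Your proof is correct and follows essentially the same route as the paper: both directions match, and the key step of sandwiching $[a_{ij}]+[a_{ij}]^*$ by columns over $D$ via the bimodule property to upgrade real positivity to real complete positivity, followed by the real-positive extension theorem for algebras with a contractive approximate identity (\cite[Theorem 4.11]{BW}), is exactly the paper's argument. The only cosmetic difference is that the paper cites \cite[Lemma 3.2]{Tak1} for the step ``$c^*Zc\geq 0$ for all columns $c$ over $D$ implies $Z\geq 0$'' where you argue it directly, and the paper also records up front that real positive maps are automatically bounded (\cite[Corollary 4.9]{BW}).
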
 

\begin{proof}  If $u$ is real positive then  $u$ is bounded 
by e.g.\ \cite[Corollary 4.9]{BW}.   Also, if $[a_{ij}] + [a_{ji}^*] \geq 0$, 
and $d_1, \cdots , d_n \in D$, set $z = \sum_{i,j} \, d_i^* \, a_{ij} \, d_j$.
Then $z + z^* = \sum_{i,j} \, d_i^* \, (a_{ij} + a_{ji}^*) \, d_j \geq 0$.   Thus 
$$\sum_{i,j} \, d_i^* (u(a_{ij}) + u(a_{ji})^*)d_j 
= u(z) + u(z)^* \geq 0 .$$
Hence $[u(a_{ij})] + [u(a_{ji})^*] \geq 0$ by e.g.\ \cite[Lemma 3.2]{Tak1}.
Thus $u$  is real completely positive and extends  by e.g.\ \cite[Theorem 4.11]{BW} to a
completely positive map on $B$.      
Conversely, it is well known (see e.g.\ the lines before \cite[Corollary 4.9]{BW})
that restrictions of  positive maps to $A$ are real positive.
\end{proof}

In the last result for the sake of a cleaner sounding result,
we are abusively identifying $D$ with a $C^*$-subalgebra of both  $B$ and $B(H)$, even though
the latter two algebras may be unrelated.  The same abuse is present in (5) of the next result.
We trust that the benefit of a cleaner result will assuage any offense this abuse may have caused.

\bigskip 

{\bf Remarks.}   Note that the last extension can be done keeping the same norm, 
by an inspection of the results referenced in the proof.

Moreover, the completely positive map from $B$ to $B(H)$
obtained in the last proof, is also a $D$-bimodule map.   This follows
from the following fact: Suppose that $\rho : D \to B$ is a
$*$-homomorphism
between $C^*$-algebras, that $\theta : D \to B(H)$ is a
$*$-homomorphism, and that $T : B \to B(H)$ is a completely positive  map. 
If the restriction of $T$ to $\rho(D)$ is a $D$-bimodule map
(which is equivalent to saying that 
$T(\rho(d)) = \theta(d) T(1)$ for $d \in D$), then 
$T$ is a $D$-bimodule map (that is, $T(\rho(d_1) x \rho(d_2)) =
 \theta(d_1) T(x) \theta(d_2)$ for $d_1, d_2 \in D, x \in B$).
The proof of this just as for Exercise 4.3  in \cite{Pnbook}.

\begin{theorem} \label{Gleq} 
Let $A$ be a unital operator algebra containing
 a  $C^*$-algebra $D$ unitally (i.e.,  with common identity). Let   $\Phi , \Psi : A \to D$ be  $D$-characters.   
 The following 
are equivalent: \begin{enumerate} \item  [{\rm (1)}]
$\| \Phi - \Psi \| < 2$.
\item  [{\rm (2)}] $\| \Phi_{|{\rm Ker} \, \Psi} \| < 1$.
\item  [{\rm (3)}]  There is a constant $M > 0$ with $\rho(\Phi(a), \Psi(a)) \leq M$ for $\| a \| < 1, a \in A$.
\item   [{\rm (4)}] If $\| \Phi(a_n) \| \to 1$ for a sequence $(a_n)$ in ${\rm Ball}(A)$, then 
 $\| \Psi(a_n) \| \to 1$.  \end{enumerate} 
If  $A$ is a subalgebra of a $C^*$-algebra $C$, 
and if $D$ is represented as  a  $C^*$-algebra  nondegenerately on a Hilbert space $H$, then the above conditions are implied by:
 \begin{enumerate} \item [{\rm (5)}]  There are positive constants
$c, d$ and completely positive $B(H)$-valued
maps  $\tilde{\Phi}, \tilde{\Psi}$ extending
$\Phi, \Psi$ to $C$, with $\tilde{\Phi} \leq
c \tilde{\Psi}$ and
$\tilde{\Psi} \leq d \tilde{\Phi}$.  
\item   [{\rm (6)}]   (Harnack inequality) \ There are positive constants
$c, d$ with $\Phi \preccurlyeq c \Psi$ and $\Psi \preccurlyeq d \Phi$ on $A$.
Here $\preccurlyeq$ is the `real positive ordering'; e.g.\  $\Phi \preccurlyeq c \Psi$ means that 
$c \Psi - \Phi$ is a real positive map on $A$.  \end{enumerate}
If $D$ is 1 dimensional then 
{\rm (5)} and {\rm (6)}  are equivalent to the other conditions.   
\end{theorem}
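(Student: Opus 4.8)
The plan is to reduce to (the noncommutative shadow of) the classical Harnack argument. Since the equivalence of (1)--(4) and the implications (5)$\Rightarrow$(1)--(4) and (6)$\Rightarrow$(1)--(4) are already in hand, when $D = \Cdb \I$ it only remains to show that (2) implies both (6) and (5). I would first assume $\Phi \neq \Psi$ and set $r_1 := \| \Phi_{|{\rm Ker} \, \Psi} \|$ and $r_2 := \| \Psi_{|{\rm Ker} \, \Phi} \|$. These lie in $(0,1)$: $r_1 < 1$ is exactly (2), $r_2 < 1$ is (2) with the roles of $\Phi, \Psi$ interchanged (legitimate since (1), hence (2), is symmetric), and $r_1 = 0$ would force $\Phi = \Psi$ because ${\rm Ker}\, \Psi$ has codimension one and $\Phi(\I) = \Psi(\I) = 1$ (similarly $r_2 \neq 0$). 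I then put $c = (1+r_1)/(1-r_1)$ and $d = (1+r_2)/(1-r_2)$, the classical Harnack constants.

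The heart of the argument will be the scalar estimate $\| c\Psi - \Phi \| \leq c-1$ (and, symmetrically, $\| d\Phi - \Psi \| \leq d-1$). To get it I would fix $a$ in the open unit ball of $A$, write $w = \Psi(a)$ (so $|w|<1$), and form the M\"obius transform $b := (a - w\I)(\I - \bar w a)^{-1}$; this lies in $A$ because $(\I - \bar w a)^{-1}$ is given by a convergent Neumann series, it has $\|b\| < 1$ (it is a M\"obius map of the ball applied to $a$), and $\Psi(b) = 0$, so $|\Phi(b)| \leq r_1$. Since also $\Phi(b) = (\Phi(a) - w)(1 - \bar w\Phi(a))^{-1} =: \zeta$ with $|\zeta| \leq r_1$, one has $\Phi(a) = (\zeta + w)/(1 + \bar w\zeta)$ and hence $\Phi(a) - w = \zeta(1-|w|^2)/(1+\bar w\zeta)$, giving $|\Phi(a) - w| \leq r_1(1-|w|^2)/(1-r_1|w|)$. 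Then
$$\bigl| c\Psi(a) - \Phi(a) \bigr| \leq (c-1)|w| + |w - \Phi(a)| \leq (c-1)|w| + \frac{r_1(1-|w|^2)}{1-r_1|w|},$$
and a short computation shows the right-hand side is $\leq c-1$: after cancelling a factor $1-|w|$ this reduces to $r_1(1+|w|)/(1-r_1|w|) \leq c-1$, and the left side is increasing in $|w|\in[0,1)$ with supremum $2r_1/(1-r_1) = c-1$. Finally, for an arbitrary $a\in{\rm Ball}(A)$ and $t\in(0,1)$ we get $t\,|c\Psi(a) - \Phi(a)| = |c\Psi(ta) - \Phi(ta)| \leq c-1$, so letting $t\uparrow 1$ gives $\| c\Psi - \Phi \| \leq c-1$.

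To finish I would argue as follows. Because $(c\Psi - \Phi)(\I) = c-1$, the functional $(c-1)^{-1}(c\Psi - \Phi)$ is unital and (by the estimate) contractive on $A$; by Hahn--Banach it extends to a unital contractive functional on $C$, which is automatically a state, so after scaling we obtain a positive functional $\sigma$ on $C$ with $\sigma_{|A} = c\Psi - \Phi$. As the restriction of a positive functional to $A$ is real positive (Lemma \ref{newl} and the lines before it), this says $\Phi \preccurlyeq c\Psi$; symmetrically there is a positive functional $\rho$ on $C$ with $\rho_{|A} = d\Phi - \Psi$ and $\Psi \preccurlyeq d\Phi$. That is (6). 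For (5), since $cd > 1$ I would set $\tilde\Psi = (d\sigma + \rho)/(cd-1)$ and $\tilde\Phi = (c\rho + \sigma)/(cd-1)$; these are positive, hence (being scalar valued) completely positive, functionals on $C$, and a direct check gives $\tilde\Psi_{|A} = \Psi$, $\tilde\Phi_{|A} = \Phi$, $c\tilde\Psi - \tilde\Phi = \sigma \geq 0$ and $d\tilde\Phi - \tilde\Psi = \rho \geq 0$ --- which is (5) (compose with $\lambda \mapsto \lambda\I_H$ if genuinely $B(H)$-valued maps are wanted).

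The only real obstacle is the planar estimate in the middle paragraph, together with the bookkeeping that keeps the M\"obius transforms inside the (generally not inverse-closed) algebra $A$ --- handled by staying in the open ball and rescaling. I expect nothing deeper; the appearance of the sharp constant $(1+r)/(1-r)$ simply reflects that the scalar case of our Gleason/Harnack equivalence recovers the classical one. It is worth noting that one-dimensionality of $D$ enters only through the fact that a unital contractive scalar functional is a state: for higher-dimensional $D$, real positivity is strictly stronger than this norm bound, which is exactly why (5) and (6) need not be equivalent to (1)--(4) in general.
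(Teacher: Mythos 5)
Your proposal only addresses the final sentence of the theorem. You explicitly take as given the equivalence of (1)--(4) and the implications (5)$\Rightarrow$(1)--(4), (6)$\Rightarrow$(1)--(4), but these constitute the bulk of the statement and are not free: (1)$\Leftrightarrow$(2) for general $D$ needs the polar-decomposition/M\"obius trick with a unitary (or the $u|x|^t$ device from Pedersen) to reduce $\Phi(f)$ to a positive element; (2)$\Leftrightarrow$(3)$\Leftrightarrow$(4) rest on Harris' Schwarz--Pick inequality for the CRF pseudometric $\rho$ together with the quantitative Lemma \ref{Harrt}; and (5)$\Rightarrow$(1) and (6)$\Rightarrow$(5) each require their own arguments (the latter using Lemma \ref{newl} to upgrade real positive $D$-module maps to completely positive extensions). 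As a proof of the theorem as stated, this is a genuine gap, not bookkeeping.

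That said, the part you do prove --- that for one-dimensional $D$, condition (2) implies both (6) and (5) --- is correct and takes a genuinely different route from the paper. The paper derives the Harnack inequality (6) from (4) by contradiction: if $c\Psi-\Phi$ fails to be real positive for every $c$, one normalizes real positive elements $a_n$ with ${\rm Re}\,\Psi(a_n)=1$, ${\rm Re}\,\Phi(a_n)\to 0$, and passes to $h_n=\exp(-a_n)$ to violate (4); this gives no control on $c,d$ beyond $c,d\in(0,1]$ (in the paper's normalization). Your argument instead runs the classical scalar Harnack computation through the M\"obius transform $b=(a-w\I)(\I-\bar w a)^{-1}$, staying inside $A$ via the Neumann series and inside the open ball via Harris/Potapov, and it yields the sharp constants $c=(1+r_1)/(1-r_1)$, $d=(1+r_2)/(1-r_2)$ together with the clean norm estimate $\|c\Psi-\Phi\|\le c-1$; the passage to (6) via Hahn--Banach and the fact that a unital contraction on a unital $C^*$-algebra is a state is correct, and your (6)$\Rightarrow$(5) bookkeeping $\tilde\Psi=(d\sigma+\rho)/(cd-1)$, $\tilde\Phi=(c\rho+\sigma)/(cd-1)$ is essentially the same algebra the paper uses (up to its opposite sign convention). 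Your closing observation about where one-dimensionality enters is also apt. If you supply proofs of (1)--(4) and of the two downward implications, the quantitative portion you have written would be a worthwhile sharpening of the paper's treatment of the scalar case.
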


\begin{proof}   We will use the fact
that $D$-characters are  $D$-module maps, 
as we said above.   Several of the arguments below are modifications of the analogous classical proofs.  

 (2) $\Rightarrow$ (1) \ If $\| \Phi_{|{\rm Ker} \, \Psi} \| = c <  1$, and $g \in {\rm Ball}(A)$ set $f = g - \Psi(g)$.   Then 
$\Psi(f) = 0$ so that $\| \Phi(g) - \Psi(g) \| = \| \Phi(f) \| \leq c \| f \| \leq 2 c$.   Thus $\| \Phi - \Psi \| < 2$.

 (1) $\Rightarrow$ (2) \  ($D$ a von Neumann algebra case.) Suppose that  $\| \Phi - \Psi \| = 2c < 2$.  If $f' \in {\rm Ker} \, \Psi, \| f' \| < 1$, choose a unitary $u$  in $D$
such that $u \Phi(f') \geq 0$, and set $f = uf' \in {\rm Ker} \, \Psi$.   Set $g = (c1 - f) (1-cf)^{-1} \in A$.   By the analytic functional 
calculus applied to the M\"obius map $(c1 - z) (1-cz)^{-1}$, we have $\| g \| \leq 1$.   We have
$$ \| \Psi(g) - \Phi(g) \|  = \| c 1 - (c1 - \Phi(f))(1 - c \Phi(f))^{-1} \|_{D} \leq 2c.$$
Since $\Phi(f) = u \Phi(f') \geq 0$, we may regard $\Phi(f)$ as a function $k$ in $C(K)_+$ for $K = \sigma(\Phi(f))$.
We obtain from the last $\leq$ inequality, just as in \cite[Lemma 2.6.1]{Browder}, that $k \leq \frac{2c}{1+c^2}$.   Thus $\| \Phi(f') \| =  \| \Phi(f) \| \leq \frac{2c}{1+c^2}$,
and so $\| \Phi_{|{\rm Ker} \, \Psi} \|   \leq \frac{2c}{1+c^2} < 1$.  

 (1) $\Rightarrow$ (2) \  Suppose that  $\| \Phi - \Psi \| = 2c < 2$.  If $f' \in {\rm Ker} \, \Psi, \| f' \| < 1$, let $x = 
\Phi(f') \in D$.  Choose by
\cite[Proposition 1.4.5]{Ped} (with $a = |x|^2$ there)
a contraction $u$  in $D$ with $x = u |x|^t$ for some $t \in (0,1)$.   We may take $u$ unitary and $t = 1$ if $D$
is a von Neumann algebra.    
In the notation of the proof in that reference $u_n^* x = |x|^{1-t} (|x|^2 + \frac{1}{n})^{-\frac{1}{2}} |x|^2
\to |x|^{2-t}$.     Hence  $u^* x = |x|^{2-t} \geq 0$.  Set $f = u^* f' \in {\rm Ker} \, \Psi$.   Set $g = (c1 - f) (1-cf)^{-1} \in A$.   By the analytic functional 
calculus applied to the M\"obius map $(c1 - z) (1-cz)^{-1}$, we have $\| g \| \leq 1$.   We have
$$ \| \Psi(g) - \Phi(g) \|  = \| c 1 - (c1 - \Phi(f))(1 - c \Phi(f))^{-1} \|_{D} \leq 2c.$$
Since $\Phi(f) = u^* \Phi(f') = |x|^{2-t}  \geq 0$, we may regard $\Phi(f)$ as a function $k$ in $C(K)_+$ for $K = \sigma(\Phi(f))$.
We obtain from the last $\leq$ inequality, just as in \cite[Lemma 2.6.1]{Browder}, that $k \leq \frac{2c}{1+c^2}$.   Thus $\| \Phi(f') \| = \| x \| =  \| \Phi(f) \|^{\frac{1}{2-t}} \leq (\frac{2c}{1+c^2})^{\frac{1}{2-t}}$,
and so $\| \Phi_{|{\rm Ker} \, \Psi} \|   \leq  (\frac{2c}{1+c^2})^{\frac{1}{2-t}} 
 < 1$.  

 (4) $\Rightarrow$ (2) \  If  $\| \Phi_{|{\rm Ker} \, \Psi} \| =  1$ one may contradict (4) by choosing a 
sequence $(a_n)$ in ${\rm Ball}(A) \cap {\rm Ker} \, \Psi$ with $\| \Phi(a_n) \| \to 1$.

 (3) $\Rightarrow$ (2) \  This implication is not needed, but if $\| a \| < 1, a \in A$, and $\Psi(a) = 0$ then by (3) we have
$\rho(\Phi(a), \Psi(a)) = \rho(\Phi(a), 0) = \tanh^{-1} \| \Phi(a) \| \leq M$.   That is, $\| \Phi(a) \| \leq 
\tanh M < 1$.

(2) $\Rightarrow$ (3) \ If (3) were not true then there is a sequence $(a_n)$ in $A$ with $\| a_n \| < 1$ for 
all $n$ and $\rho(\Phi(a_n), \Psi(a_n)) \to \infty$.   Let $b_n = T_{-\Psi(a_n)}(a_n)$.
Since $T_{-\Psi(a_n)}$ is a M\"obius map of
the open ball in any $C^*$-algebra containing $A$, by the Schwarz-Pick equality of Harris
discussed above the theorem,  we have
$$\rho(\Phi(b_n), \Psi(b_n)) = \rho(\Phi(a_n), \Psi(a_n)) \to \infty.$$  
However $\Psi(b_n) = 0$, so that $\tanh^{-1} \| \Phi(b_n) \|  \to \infty$.
Thus $\| \Phi(b_n) \| \to 1$, contradicting (2).

(3) $\Rightarrow$ (4) \ If (4) fails then there exists $c \in (0,1)$ and a sequence $(a_n)$ in $A$ with $\| a_n \| < 1$ for 
all $n$ and $\| \Phi(a_n) \| \to 1$ but $\| \Psi(a_n) \| \leq c$.   By Lemma \ref{Harrt}, 
$\rho(\Phi(a_n), \Psi(a_n)) \to \infty,$   contradicting (3).

(5) $\Rightarrow$ (1) \ Suppose that (1) was false, but that $\tilde{\Phi} \leq \kappa \tilde{\Psi}$ for some $\kappa > 0$.
  Thus for any $\epsilon > 0$
there exists $f \in {\rm Ball}(A)$ with $\| \Phi(f) - \Psi(f) \| > 2-\epsilon$.   With $x = \Phi(f) - \Psi(f)$, and $t \in 
(0,1)$, choose similarly to an argument in the proof that (1) $\Rightarrow$ (2) above, a
contraction $u \in D$  such that
$u^* x = |x|^{2-t} \geq 0$.   So 
$\| u^* \Phi(f) - u^* \Psi(f) \| = \| x \|^{2-t} > (2-\epsilon)^{2-t}$.  
Then choose a 
state $\varphi$ of $D$  with $\varphi( \Phi(g) - \Psi(g)) > (2-\epsilon)^{2-t}$, where $g = u^* f + 1$.
Thus $${\rm Re} \, \varphi( \Phi(g) - \Psi(g)) 
= \varphi(   \Phi({\rm Re} \, g) - \Psi({\rm Re} \, g)) > (2-\epsilon)^{2-t}.$$
Note that $g$ is real positive in $A$, so that if $\psi$ is the state $\varphi \circ \Phi$ then Re $\psi(g) = \psi({\rm Re} \, g) \geq 0$.   Hence
$\alpha = \psi({\rm Re} \, g) 
  \geq 0$, and similarly
$\beta = \varphi(\Psi({\rm Re} \, g) ) 
\geq 0$.    Since
$\alpha, \beta \in [0,2]$ and $\alpha - \beta > (2-\epsilon)^{2-t}$, we must have $\alpha
> (2-\epsilon)^{2-t}$ and $\beta  < 2 - (2-\epsilon)^{2-t}$.
We also have
$${\rm Re} \, \Phi(g) = \tilde{\Phi}({\rm Re} \, g) \leq \kappa \, \tilde{\Psi}({\rm Re} \, g) = \kappa \,
 {\rm Re} \, \Psi(g) .$$
Hence $(2-\epsilon)^{2-t} < \alpha \leq \kappa \beta \leq \kappa (2 - (2-\epsilon)^{2-t}),$ whence $(2-\epsilon)^{2-t} < \frac{2 \kappa}{\kappa + 1}$.
Since $\epsilon > 0$ and $t \in (0,1)$ were arbitrary we have a contradiction.

(6) $\Rightarrow$ (5) \  By Lemma \ref{newl} and its proof, a real positive $D$-module map into $D$ is  real completely positive 
and  extends to a completely positive map $C \to B(H)$.  
Hence 
there exist completely positive maps
$\rho$ and $\nu$ on $C$ which extend
$\Phi - c \Psi$ and $\Psi - d \Phi$ respectively.
Thus on $A$ we have $\Phi = \rho + c \Psi
= \rho + c (\nu + d \Phi)$.  Evaluating at 1 we see that $1 \geq cd 1$.  
We may assume that $cd \neq 1$, or else $\rho + c \nu = 0$,
forcing $\rho = \nu = 0$ and $\Phi = c \Psi$ on $A$, which 
case is obvious.  Hence
$\tilde{\Phi} = (1 - cd)^{-1} (\rho + c \nu)$  extends $\Phi$ to $M$.
Similarly $\tilde{\Psi} = (1 - cd)^{-1} (\nu  + d \rho)$ extends $\Psi$ to 
$M$. Thus $d \tilde{\Phi} = (1 - cd)^{-1} (d \rho + cd \nu)
\leq \tilde{\Psi}$.   Similarly, $c \tilde{\Psi} \leq \tilde{\Phi}$.

For the remaining direction when $D = \Cdb$, assume that
(4) holds.    By the symmetry implied by the equivalence with (3), 
(4) also holds with $\Phi$ and $\Psi$ interchanged.  We first claim
that $\Phi - c \Psi$ is real positive on $A$
 for a positive constant $c$.
If $\Phi - c \Psi$ is not real positive for
any positive $c$, then there exist a sequence
$a_n \in {\mathfrak r}_A$ with Re $\Psi(a_n) = 1$
but Re $\Phi(a_n) \to 0$.  Replacing $a_n$ by
$h_n = \exp(-a_n)$ we have $\| h_n \| \leq 1$ since
$-a_n$ is dissipative.   Also, $$|\Psi(h_n)|
= | \exp(-\Psi(a_n)) | =
\exp(-{\rm Re} \, \Psi(a_n)) = e^{-1},$$
while  $|\Phi(h_n)| \to 1$
by a similar computation.  This contradicts
 (4).     Thus
$\Phi - c \Psi$ is real positive for a positive constant $c$.
Evaluating at $1$ shows that $c \in (0,1]$.    
Similarly, $\Psi - d \Phi$ is real positive on $A$
for a positive constant $d  \in (0,1]$.   Thus (6) holds.  \end{proof}

{\bf Remarks.}   (a) \ In (1) and (2) in the last theorem one may use the completely bounded norm.   That is,
the items are equivalent to $\| \Phi - \Psi \|_{\rm cb} < 2$ and to  $\| \Phi_{|{\rm Ker} \, \Psi} \|_{\rm cb} < 1$.
This may be proved by the `standard trick' described when we introduced $D$-characters.
Namely, recall that for $[x_{ij}] \in M_n(D)$ we have $\| [x_{ij} 
] \|_n = \sup \{ \| \sum_{i,j} s_i^* x_{ij} r_j \| \}$,
the supremum over $s_i, r_i \in D$ with $\sum_i \, r_i^* r_i$ and $\sum_i \, s_i^* s_i$ both contractive.
   Hence for $[x_{ij}] \in {\rm Ball}(M_n(A))$ then 
$\| [\Phi(x_{ij}) - \Psi(x_{ij}) ] \|$ is dominated by the supremum over such $r_i, s_i$ of 
$$\| \sum_{i,j} s_i^* (\Phi(x_{ij}) - \Psi(x_{ij})) r_j \| = \| \Phi(x)- \Psi(x) \| \leq \| \Phi - \Psi \|,$$
where $x = \sum_{i,j} s_i^* x_{ij} r_j \in {\rm Ball}(A)$.  Hence 
$\| \Phi - \Psi \|_{\rm cb} = \| \Phi - \Psi \|$, and by a similar argument
$\| \Phi_{|{\rm Ker} \, \Psi} \|_{\rm cb} = \| \Phi_{|{\rm Ker} \, \Psi} \|$.  \\

(b) \ Only  the proofs involving (1), and the 
proof that (2) implies (3),   
seem to require that $\Psi$ and $\Phi$ are $D$-module maps, etc.  The other proofs do require however that $\Phi, \Psi$ are contractive unital maps.   It is easy to find examples 
of contractive unital maps 
 where (2) holds but not (4), hence not (3). Also the $2 \times 2$ matrix example in the introduction 
to the present section, shows that then (1) cannot be equivalent to 
(3).

\medskip

Open question: Is (5)  equivalent to the other conditions  for $D$-characters?    
Note that only one of the two inequalities
in (5) was used to prove (1)--(4), so if (5) is equivalent to (1)--(4) then one  of the two inequalities
in (5) implies the other.

\begin{corollary} \label{iser}    Let $A$ be a unital operator algebra containing 
a  $C^*$-subalgebra $D$ unitally (with common identity).
 The equivalent  items  {\rm (1)}--{\rm (4)} in the last theorem define an equivalence relation on the
$D$-characters of $A$.  
\end{corollary}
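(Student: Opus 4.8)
The plan is to verify the three defining properties of an equivalence relation on the set of $D$-characters of $A$, in each case using whichever of the equivalent conditions (1)--(4) of Theorem \ref{Gleq} is most convenient. Reflexivity and symmetry come for free from (1): one has $\| \Phi - \Phi \| = 0 < 2$, and $\| \Phi - \Psi \| = \| \Psi - \Phi \|$, so the relation defined by (1) is symmetric. Hence the only property with real content is transitivity.

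For transitivity I would use condition (4). Suppose $\Phi, \Psi, \Xi$ are $D$-characters with $\Phi$ related to $\Psi$ and $\Psi$ related to $\Xi$, and let $(a_n)$ be a sequence in ${\rm Ball}(A)$ with $\| \Phi(a_n) \| \to 1$. Applying (4) to the pair $(\Phi, \Psi)$ gives $\| \Psi(a_n) \| \to 1$, and applying (4) to the pair $(\Psi, \Xi)$ then gives $\| \Xi(a_n) \| \to 1$; this is exactly condition (4) for the pair $(\Phi, \Xi)$, so $\Phi$ is related to $\Xi$. As an alternative I would note that one can argue just as cleanly from (3) and the triangle inequality for the hyperbolic metric $\rho$: picking $M_1, M_2 > 0$ with $\rho(\Phi(a), \Psi(a)) \le M_1$ and $\rho(\Psi(a), \Xi(a)) \le M_2$ for all $a$ in the open unit ball of $A$, and observing that $\Phi(a), \Psi(a), \Xi(a)$ all lie in the open unit ball of $D$ (the $D$-characters being contractive), where $\rho$ is a genuine metric, we get $\rho(\Phi(a), \Xi(a)) \le M_1 + M_2$, which is (3) for $(\Phi, \Xi)$.

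I do not anticipate any real obstacle: the substance is all in Theorem \ref{Gleq}, and what is left is a formal chaining of implications. The only thing to be a little careful about is that conditions (3) and (4) are stated as one-directional implications for an ordered pair of characters, so before the transitivity step I would record that the relation they define coincides with the plainly symmetric relation (1); once that is in hand, each of the two transitivity arguments above is a single line.
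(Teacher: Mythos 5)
Your proof is correct and follows essentially the same route as the paper, which simply observes that condition (4) is evidently an equivalence relation on the $D$-characters (transitivity and reflexivity being immediate from (4), with symmetry supplied by the equivalence with the manifestly symmetric condition (1)). Your care in noting that (3) and (4) are a priori asymmetric, and your alternative transitivity argument via the triangle inequality for $\rho$, are both sound but add nothing beyond the paper's one-line argument.
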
 

\begin{proof}   This follows since {\rm (4)} in the last theorem  is evidently an equivalence relation on the
$D$-characters.  
\end{proof}

We call the equivalence relation in the  corollary {\em Gleason equivalence}, and the 
  equivalence  classes will be called  {\em  Gleason parts}.   There has been much interest 
in the literature (\cite{SV,Bear}, etc) 
in the (sometimes coarser) equivalence relation 
often called {\em Harnack equivalence}, with the associated equivalence classes called  {\em  Harnack parts}. 
This is essentially the relation defined by (5) in Theorem \ref{Gleq}, but for more general $B(H)$-valued maps rather than our $D$-characters.  
Indeed the relation in (6) in the case that $A$ is the disk algebra and $D$ is one dimensional is the usual Harnack 
inequality for  harmonic functions on 
the disk.  We have not seen Gleason equivalence in our sense in the literature for operator algebras.

\medskip

Every Choquet boundary point $\omega$  in the maximal ideal space of a uniform algebra $A$
is a one point  Gleason part.  Indeed if $\chi$ is another character different from $\omega$ then 
there is a neighborhood $U$ of $\omega$ excluding $\chi$.   Also, by a basic characterization of Choquet points \cite{Gam} 
there is an $f \in {\rm Ball}(A)$  such that $f(\omega) = 1$ but $|f|< 1$ outside of $U$, and
in particular $|\chi(f)| < 1$.   
If $\chi$ were in the same part as $\omega$ this would violate {\rm (4)} in the last theorem.

For a noncommutative version of the fact in the last paragraph, suppose that either $\Phi : A \to D$ is a Choquet representation, or alternatively that $\varphi$ is a character
of $A$ which is an ${\mathfrak S}$-peaking state 
on $A$ in the sense of \cite[Section 4]{Clouatre}.    Clearly $\psi$ constitutes a one point  Gleason part by
 Theorem  \ref{Gleq} (4). 
For example on the upper triangular $2 \times 2$ matrices, 
evaluation at the $1$-$1$ corner  is a character
of $A$ which  is a peaking state, and is also a  one point  Gleason part in the maximal ideal space.
The support projection of this state in $A^{**}$ is a peak projection, and is a  minimal projection.

If $\varphi$ is a character of $A$ that admits a characteristic sequence in the sense of \cite[Section 4]{Clouatre}
then by Theorem 4.3 of \cite[Section 4]{Clouatre}, $\varphi$ is pure.  Combining 
\cite[Theorem 4.3 (1)]{Clouatre} and our Theorem  \ref{Gleq} (4) shows that  $\varphi$ 
constitutes a one point  Gleason part. 

\section{Application of Gleason parts to subdiagonal algebras} \label{hankel}

As an application of the theory of Gleason parts, we provide existence criteria for a so-called Wermer embedding function in the noncommutative context. As we shall subsequently illustrate, the importance of such a function lies in the fact that it ensures the existence of compact Hankel maps.
In this section  $A$ is a maximal 
subdiagonal subalgebra (in the sense of Arveson \cite{Arv}) of a von Neumann algebra $M$ with a faithful normal tracial state $\tau$.  We also assume that it is antisymmetric, 
that is $A \cap A^* = \Cdb \I$.

\begin{theorem}[Wermer embedding function] Let $A$ be antisymmetric,
 with $\omega$ a normal state in the Gleason part of $\tau$, distinct from $\tau$. Then there exists an element $z_r\in A_0$ which is invertible in $M$ such that $H^2(A)z_r=H^2_0(A)$. This element is of the form $h^{1/2}v_rh^{-1/2}$ for some unitary $v_r\in M$ where $h\in M_+^{-1}$ is the density for which $\omega=\tau(h\cdot)$. If in fact $\omega$ is also tracial, we have that $v_r$ 
commutes with $h$ and hence that $v_r=z_r$. 
Similarly, there exists an element $z_l\in A_0$ which is invertible in $M$ such that $z_lH^2(A)=H^2_0(A)$. This element is of the form $h^{-1/2}v_lh^{1/2}$ for some unitary $v_l\in M$. As before if $\omega$ is tracial, we have that $v_l=z_l$.
\end{theorem}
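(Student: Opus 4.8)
The plan is to obtain $z_r$ (and symmetrically $z_l$) by conjugating a unitary Beurling‑type generator of the shift‑invariant subspace $H^2_0(A)$ by an outer factor extracted from the density $h$; the three ingredients are that the Gleason hypothesis forces $h$ to be bounded with bounded inverse, the Szeg\H{o}/outer factorization of $h^{-1}$ inside $A^{-1}$, and the noncommutative Beurling theorem. \emph{Step 1.} Write $\Phi=\tau(\cdot)\I$ for the subdiagonal expectation onto $A\cap A^*=\Cdb\I$. Saying $\omega$ lies in the Gleason part of $\tau$ means the character $\omega_{|A}$ is Gleason equivalent to $\tau_{|A}$; since $\Cdb\I$ is one‑dimensional, Theorem~\ref{Gleq} applies and yields $\|\omega_{|A_0}\|<1$ and, via its condition~(6), constants $c_0,d_0>0$ with $c_0\,\omega_{|A}-\tau_{|A}$ and $d_0\,\tau_{|A}-\omega_{|A}$ real positive on $A$. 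Feeding this into the subdiagonal Szeg\H{o} theory I would deduce that the density $h$ of $\omega$ relative to $\tau$ lies in $M_+^{-1}$ (and $h\ne\I$, as $\omega\ne\tau$). I also record that $\omega$ is tracial iff $h\in Z(M)$: for $\omega(xy)=\tau(hxy)$ while $\omega(yx)=\tau(xhy)$ by traciality of $\tau$, so $\omega(xy)=\omega(yx)$ for all $x,y\in M$ iff $hx=xh$ for all $x\in M$.

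\emph{Step 2 (factorization and the generators).} Since $h^{-1}\in M_+^{-1}$, the outer (Szeg\H{o}) factorization available in maximal subdiagonal algebras gives $c,e\in A^{-1}$ with $h^{-1}=c^*c$ and $h^{-1}=ee^*$ (the ``left'' and ``right'' outer factorizations of $h^{-1}$). Because $\Phi$ is multiplicative on $A$ we have $A_0A\subseteq A_0$ and $AA_0\subseteq A_0$, so $H^2_0(A)$ is a two‑sided $A$‑submodule of $L^2(M,\tau)$; and since $A=A_0+\Cdb\I$ with $A_0\perp\I$ in $L^2$, $H^2_0(A)$ has codimension one in $H^2(A)=H^2_0(A)\oplus\Cdb\I$. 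Apply the noncommutative Beurling theorem to $H^2_0(A)$, viewed first as a right‑ and then as a left‑$A$‑invariant subspace: the diagonal $\Cdb\I$ contains no nontrivial projection, so the ``type~2'' (reducing) summand vanishes, and the codimension‑one condition leaves a single unitary generator on each side. Hence there are unitaries $\mu_r,\mu_l\in M$ with $H^2(A)\mu_r=H^2_0(A)=\mu_lH^2(A)$; since $H^2(A)\cap M=A$ (maximality) and $H^2_0(A)\cap M=A_0$, in fact $\mu_r,\mu_l\in A_0$.

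\emph{Step 3 (the embedding functions).} Put $z_r:=c^{-1}\mu_r c$ and $z_l:=e\mu_l e^{-1}$. Each is a product of elements of $A$ and is invertible in $M$, and by traciality $\tau(z_r)=\tau(\mu_r)=0=\tau(\mu_l)=\tau(z_l)$, so $z_r,z_l\in A_0$. From $h^{-1}=c^*c$ we get $z_r^*h^{-1}z_r=c^*\mu_r^*\big(c^{-*}h^{-1}c^{-1}\big)\mu_r c=c^*\mu_r^*\mu_r c=c^*c=h^{-1}$, so $v_r:=h^{-1/2}z_r h^{1/2}$ satisfies $v_r^*v_r=\I$ and is therefore unitary ($M$ being finite), giving $z_r=h^{1/2}v_r h^{-1/2}$. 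Symmetrically, $h^{-1}=ee^*$ forces $e^*he=\I$, whence $z_l^*hz_l=e^{-*}e^{-1}=h$ and $v_l:=h^{1/2}z_l h^{-1/2}$ is unitary with $z_l=h^{-1/2}v_l h^{1/2}$. For the shift identities, $Ac^{\pm1}=A$ and $A_0c^{\pm1}=A_0$ (since $c\in A^{-1}$ and $\Phi$ is multiplicative), so $H^2(A)z_r=\overline{Ac^{-1}\mu_r c}=\overline{A\mu_r}\,c=\big(H^2(A)\mu_r\big)c=H^2_0(A)c=H^2_0(A)$, and similarly $z_lH^2(A)=H^2_0(A)$ (using $e^{\pm1}A=A$ and $e^{\pm1}A_0=A_0$). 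Finally, if $\omega$ is tracial then $h\in Z(M)$ by Step~1, so $h^{\pm1/2}$ is central and therefore $v_r=z_r$, $v_l=z_l$, each trivially commuting with $h$.

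\emph{Main obstacle.} The load‑bearing points are Step~1 — that membership in the Gleason part forces $h\in M_+^{-1}$, which I expect requires the subdiagonal Szeg\H{o} machinery rather than a soft argument — and the ``single unitary generator'' assertion in Step~2, i.e.\ that for antisymmetric maximal subdiagonal $A$ the space $H^2_0(A)$ is not a proper column sum in its Beurling decomposition. Granting these, the device $z_r=c^{-1}\mu_r c$ is the crux and the remaining identities are routine.
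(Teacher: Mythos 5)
Your Step 2 contains the essential gap, and it is fatal to the argument as written. You claim that the noncommutative Beurling theorem, together with the facts that $H^2_0(A)$ has codimension one in $H^2(A)$ and that $D=\Cdb\I$ contains no nontrivial projections, forces $H^2_0(A)$ to have a single unitary generator $\mu_r$. Notice that this step makes no use of $\omega$ whatsoever; if it were correct, \emph{every} antisymmetric maximal subdiagonal algebra would admit a Wermer embedding function, and hence (by the compact-Hankel results quoted in Section \ref{hankel}, applied to the symbol $\mu_r^*\notin A$) nontrivial compact Hankel operators. This is false: the ordered-group examples alluded to via the Exel reference (e.g.\ $\Gamma=\Zdb+\sqrt{2}\,\Zdb$ with the order inherited from $\Rdb$) are antisymmetric and admit no nontrivial compact Hankel matrices, hence no unitary Wermer function. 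The precise point where the Beurling argument breaks is that $H^2_0(A)$ need not be \emph{simply} invariant: one can have $\overline{H^2_0A_0}=H^2_0$ (already in the big-disk example every $\chi_\gamma$, $\gamma>0$, factors as $\chi_{\gamma/2}\chi_{\gamma/2}$), so the right wandering subspace $H^2_0\ominus\overline{H^2_0A_0}$ vanishes and the Beurling decomposition of $H^2_0$ is entirely of ``type 2''. The type 2 summand is not controlled by projections in the diagonal, so your reason for discarding it does not apply. Consequently the hypothesis that the Gleason part of $\tau$ contains some normal $\omega\neq\tau$ is not decorative: it is exactly what produces a generator.

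The paper's proof obtains the generator directly from $\omega$ rather than from Beurling: it takes $e$ to be the orthogonal projection of $\I$ onto $H^2_0$ in the inner product $\langle\cdot,\cdot\rangle_\omega$ (a legitimate inner product on the same space because the Gleason/Harnack inequalities give $\alpha\I\le h\le\beta\I$), notes that $e\neq 0$ precisely because $\omega\neq\tau$, proves $c^2\omega(f)=\omega(e^*fe)$ on $A$ using multiplicativity of $\omega$, upgrades this identity to all of $M$ via Ueda's noncommutative Gleason--Whitney theorem to get $h=\tfrac{1}{c^2}ehe^*$ (whence $h^{-1/2}eh^{1/2}$ is a multiple of a unitary), and finally shows $H^2e=H^2_0$ by an $\omega$-orthogonality and density argument with $(A_0+A^*)e$. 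Your Steps 1 and 3 are essentially sound (the identity $z_r^*h^{-1}z_r=h^{-1}$ giving unitarity of $h^{-1/2}z_rh^{1/2}$, and the invariance $H^2_0c=H^2_0$ for $c\in A^{-1}$, are correct computations), but they are downstream of the unproved existence of $\mu_r$, so the proposal as it stands does not establish the theorem.
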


\begin{proof} We prove the existence of the element $z_r$. This proof will clearly also suffice to establish the existence of an element $w_r\in A_0^*$ of the form $h^{1/2}u_rh^{-1/2}$ for some unitary $u_r$, for which $H^2(A^*)w_r=H^2_0(A^*)$. 
The 
claim about $z_l$ and $v_l$ then follows by simply setting $z_l=w_r^*$
and $v_l = u_r^*$. For ease of notation we will drop the subscripts in the proof, and simply write $z$ and $v$ for $z_r$ and $v_r$.

Suppose that $\omega$ is in the Gleason part of $\tau$
 with $\omega\neq\tau$. 
The completion of $A$, $A_0$ and $M$ under the $L^2$-norm generated by $\omega$, will respectively be denoted by $H^2(\omega)$, $H^2_0(\omega)$ and $L^2(\omega)$. From the analysis of Gleason parts, we know that there exist $\alpha, \beta >0$ such that for all $g\in M^+$, $\alpha\tau(g)\leq \omega(g) \leq \beta\tau(g)$. This ensures that the spaces $H^2(\omega)$, $H^2_0(\omega)$ and $L^2(\omega)$ are effectively just equivalent renormings of $H^2(A)$, $H^2_0(A)$ and $L^2(M)$. 
The space $L^1(\omega)$ is similarly an equivalent renorming of $L^1(M)$. 
The action of the state $\omega$ admits a natural extension to the space $L^1(\omega)$, which we will still denote by $\omega$. It is an exercise to see that for this extension we have that $\omega(b^*a)=\langle a,b\rangle_\omega$ for all $a,b\in L^2(M)=L^2(\omega)$.  
Below we work in $L^2(M)$, regarding $L^2(\omega), H^2(\omega)$
and $H^2_0(\omega)$ as subspaces of $L^2(M)$ carrying a second norm and inner 
product. 

Now let $e \in H^2$ be the projection of $\I\in A$ onto $H^2_0(\omega)$ with respect to the inner product 
$\langle\cdot,\cdot\rangle_\omega$ coming from $\omega$. So $e\in H^2_0(\omega)$, with $\I-e$ orthogonal to $H^2_0$ in $L^2(\omega)$.
Observe that $c^2=\langle e,e\rangle_\omega=\|e\|^2_\omega\neq 0$, for otherwise $\I$ will be orthogonal to $A_0$ with respect to 
$\langle\cdot,\cdot\rangle_\omega$, which would in turn ensure that $\omega$ annihilates $A_0$. But that would force $\omega= \tau$, 
which would contradict our assumption. Hence we may let $z=\frac{1}{c}e$. 

Let $f\in A$ be given. Since then $fe\in H^2_0$, we have that $fe\perp_\omega(\I-e)$, and hence that $\omega(fe)=\omega(e^*fe)$. In particular for $f=\I$, we get $\omega(e)=c^2$. It is an exercise to see that the multiplicativity of $\omega$ on $A$ ensures that $\omega(ab)=\omega(a)\omega(b)$ for all $a\in A$, $b\in H^2$. From this it now follows that $$c^2\omega(f)=\omega(fe)=\omega(e^*fe) \quad\mbox{for all }f\in A.$$

We proceed to show that $c^2\omega(a)=\omega(|e|^2a)$ for all $a\in M$. To see this, firstly note that by construction, the functional $\gamma: M\to \mathbb{C}: a\to \frac{1}{c^2}\omega(e^*ae)$ is well-defined and positive on $M$, and assumes the value 1 at $\I$. Hence it is a state. It is however a state which agrees with $\omega$ on $A$. Therefore the claim follows by the noncommutative 
Gleason-Whitney theorem \cite[Theorem 4.2]{Ueda}.

Let $h\in L^1(M)_+$ be the density for which $\omega=\tau(h\cdot)$. The fact that there exist $\alpha, \beta >0$ such that for all $g\in M^+$, $\alpha\tau(g)\leq \omega(g) \leq \beta\tau(g)$, may alternatively be formulated as the claim that $\alpha\I \leq h \leq \beta\I$, or equivalently that $h\in M_+^{-1}$ as claimed.

The fact that for every $f\in M$ we have that $$\tau(hf)= \omega(f) =\frac{1}{c^2} \omega(e^*fe)=\frac{1}{c^2}\tau(he^*fe)=\frac{1}{c^2}\tau(ehe^*f),$$ensures that as affiliated operators of $M$, $h=\frac{1}{c^2}ehe^*$. This may be reformulated as the claim that $\I=\frac{1}{c^2}|h^{1/2}e^*h^{-1/2}|^2$. Since $M$ is finite, this in turn ensures that 
$v = \frac{1}{c}h^{-1/2}eh^{1/2}$ is a unitary element of $M$. It follows that $z=\frac{1}{c}e$ is of the form $z=h^{1/2}vh^{-1/2}$. In view of the fact that $H^2_0(A)=H^2_0(\omega)$, this description of $z$ moreover proves that $z\in H^2_0(A)\cap M=A_0$.   

Now observe that if $\omega$ is actually tracial, that would ensure 
that for any $a,b\in M$ we will have 
that $$\tau((ha)b)=\omega(ab)=\omega(ba)=\tau(hba)=\tau((ah)b).$$It follows 
that then $ha=ah$ for any $a\in M$, in other words $h\eta\mathcal{Z}(M)$. 
In this case we will therefore have that $z = h^{-1/2} v h^{1/2}= v = \frac{1}{c}e$.

It remains to prove that $H^2(A)z=H^2_0(A)$, or equivalently that $H^2(A)e=H^2_0(A)$. Since $e\in A_0$, it is clear that $H^2e\subset H^2_0$. Given that $e$ is an invertible element of $M$, $H^2e$ must be a closed subspace of $L^2(M)$. Let $g\in H^2_0\ominus_\omega H^2e$ be given. If we are able to show that we then necessarily have that $g=0$, it will follow that $H^2_0=H^2e$ as required. 

Since for any $f\in A$ we have that $fg\in H^2_0$, we will then also have that $(\I-e)\perp fg$ with respect to the inner product $\langle\cdot,\cdot\rangle_\omega$. In other words for any $f\in A$ we have that
$$0=\langle fg, (\I-e)\rangle_\omega=\omega(fg)-\omega(e^*fg).$$Next observe that $\omega(fg) =\omega(f)\omega(g)$ for any $f\in A$. To see this select any sequence $\{a_n\}\subset A$ converging to $g$ in the $L^2$-norm, and notice that we then have that $\omega(fg) =\lim_{n\to\infty}\omega(fa_n)=\lim_{n\to\infty}\omega(f).\omega(a_n)=\omega(f)\omega(g)$. Therefore $\omega(fg)=0$ for all $f\in A_\omega=\{a\in A: \omega(a)=0\}$. When combined with the previously centered equation, this ensures that$$0=\omega(e^*fg)=\omega((f^*e)^*g)= \langle g,f^*e\rangle_\omega\quad\mbox{for all }f\in A_\omega.$$

We have therefore shown that $g\perp (A+A_\omega^*)e$ with respect to the inner product $\langle\cdot,\cdot\rangle_\omega$. But $A+A_\omega^*=(A_0+\mathbb{C}\I)+A_\omega^*=A_0+(\mathbb{C}\I+A_\omega)^*=A_0+A^*$, and $A_0+A^*$ is known to be norm dense in 
$L^2(M)$. The equivalence of the norms generated by $\tau$ and $\omega$
 therefore ensures that $A_0+A^*$ is $\|\cdot\|_\omega$-dense in $L^2$. Since $e\in M^{-1}$, $(A_0+A^*)e$ is similarly $\|\cdot\|_\omega$-dense in $L^2$. Hence $g$ is orthogonal to $L^2$ with respect to $\langle\cdot,\cdot\rangle_\omega$, ensuring that $\|g\|_\omega=0$ as required.
\end{proof}

We proceed with illustrating the link of the above theorem to the existence of compact Hankel maps. 

\begin{definition}
Given $\varphi\in M$ we define the Hankel map $\mathcal{H}_f$ with symbol $f$ to be the map $\mathcal{H}_f:H^2(A)\to(H^2_0(A))^*: a\mapsto (\I-P_+)(f a)$, where $P_+$ is the orthogonal projection of $L^2(M)$ onto $H^2(A)$.
\end{definition}

The following results from \cite{LX-Toeplitz} are crucial  in establishing necessary conditions for the existence of compact Hankel maps.  The second 
result is  a faithful non-commutative version of \cite[Theorem 2]{CMNX},
 and its proof closely follows the proof in \cite{CMNX}.

\begin{lemma}[\cite{LX-Toeplitz}] Suppose there exists an element $z\in A_0$ which is unitary in $M$, such that $zH^2(A)=H^2_0(A)$. Then the left multiplication operators $\{M_{z^n}\}\subset B(L^2(M))$ (respectively $\{M_{(z^*)^n}\}$) converge to 0 in the weak operator topology.
\end{lemma}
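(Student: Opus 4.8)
The plan is to prove the (formally stronger) assertion that $\langle M_{z^n}\xi,\eta\rangle_\tau\to 0$ for all $\xi,\eta\in L^2(M)$, where $\langle a,b\rangle_\tau=\tau(b^*a)$ as in the Wermer theorem above. Since $z$ is a unitary of $M$, left multiplication by $z^n$ is a unitary operator on $L^2(M)$; in particular $\sup_n\|M_{z^n}\|_{B(L^2(M))}=1$. Combined with the density of $M$ in $L^2(M)$ for $\|\cdot\|_2$, a routine $3\epsilon$ estimate shows it is enough to prove the convergence when $\xi$ and $\eta$ range over $M$.

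The key observation is that $\{z^k\}_{k\ge 0}$ is an orthonormal family in $L^2(M)$. Indeed, since $z^*=z^{-1}$ we have $\langle z^j,z^k\rangle_\tau=\tau((z^*)^jz^k)=\tau(z^{k-j})$ for $k\ge j$; and for $m\ge 1$ one has $z^m\in A_0$, because $z\in A_0$ and $A_0$ is an ideal of $A$, being the kernel in $A$ of the multiplicative expectation $\Phi$ of the subdiagonal algebra $A$ onto $D=\Cdb\I$, so that $\tau(z^m)=\tau(\Phi(z^m))=0$. (Taking adjoints gives $\tau(z^{-m})=0$ as well, so in fact $\{z^k\}_{k\in\mathbb{Z}}$ is orthonormal, but the one-sided family is all that is needed here.)

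Now fix $\xi,\eta\in M$ and put $b=\xi\eta^*\in M$. Using the trace property,
\[ \langle M_{z^n}\xi,\eta\rangle_\tau=\tau(\eta^*z^n\xi)=\tau(\xi\eta^*z^n)=\tau(bz^n)=\tau\big((b^*)^*z^n\big)=\langle z^n,b^*\rangle_\tau . \]
Writing $c=b^*\in L^2(M)$, Bessel's inequality for the orthonormal set $\{z^k\}_{k\ge 0}$ gives $\sum_{k\ge 0}|\langle c,z^k\rangle_\tau|^2\le\|c\|_2^2<\infty$, so its $n$-th term $\langle z^n,c\rangle_\tau=\overline{\langle c,z^n\rangle_\tau}$ converges to $0$. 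Hence $M_{z^n}\to 0$ in the weak operator topology. The corresponding statement for $(z^*)^n$ follows either by repeating the argument with the orthonormal family $\{z^{-k}\}_{k\ge 0}$, or more quickly by noting that $M_{(z^*)^n}=(M_{z^n})^*$ and that passage to adjoints is continuous for the weak operator topology, so $M_{(z^*)^n}\to 0^*=0$.

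There is no genuine obstacle here: the only steps requiring (minor) care are the reduction to the dense subalgebra $M$ via the uniform bound $\|M_{z^n}\|=1$ and the density of $M$ in $L^2(M)$, and the orthonormality of $\{z^n\}$, after which Bessel's inequality — the Hilbert-space form of the Riemann--Lebesgue lemma — finishes the proof. I note that the argument uses only that $z$ is a unitary of $M$ lying in $A_0$ (equivalently, that $\tau(z^m)=0$ for all $m\ge 1$), and not the full hypothesis $zH^2(A)=H^2_0(A)$; intuitively that hypothesis says $M_z$ acts on $H^2(A)$ as a pure shift with one-dimensional wandering subspace, and pure shifts tend to $0$ weakly, but the Bessel computation above is the shortest self-contained route and handles all of $L^2(M)$ at once.
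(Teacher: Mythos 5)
Your proof is correct, and it is a genuinely more direct route than the one the paper sketches. Both arguments pivot on the same key fact --- that $\{z^n\}_{n\ge 0}$ is orthonormal in $L^2(M)$ because $z$ is unitary and $z^m\in A_0$ forces $\tau(z^m)=0$ --- but they diverge after that. The paper's proof (as outlined after Theorem \ref{LX}) first deduces $z^n\to 0$ weakly in $L^2(M)$, then that $z^nf\to 0$ weakly in $L^1(M)$ for $f\in L^2(M)$, and isolates as the ``crucial step'' the upgrade to weak-$L^2$ convergence of $z^nf$, proved by showing that $0$ is the only weak limit point of the bounded sequence $\{z^nf\}$. You bypass that step entirely: restricting to $\xi,\eta$ in the $\|\cdot\|_2$-dense subalgebra $M$, the trace property turns the matrix coefficient $\langle M_{z^n}\xi,\eta\rangle_\tau=\tau(\eta^*z^n\xi)$ into $\langle z^n,\eta\xi^*\rangle_\tau$, a Fourier coefficient of the fixed vector $\eta\xi^*\in M\subset L^2(M)$ against the orthonormal system, which tends to $0$ by Bessel; the uniform bound $\|M_{z^n}\|=1$ then propagates the convergence to all of $L^2(M)$. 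This is exactly the right way to sidestep the difficulty that for general $\xi,\eta\in L^2(M)$ the product $\eta\xi^*$ lands only in $L^1(M)$, which is what forces the paper's detour through weak-$L^1$ convergence. Your observations that $A_0$ is an ideal (being the kernel of the character $\Phi$ in the antisymmetric case), that $M_{(z^*)^n}=(M_{z^n})^*$ handles the second family, and that the hypothesis $zH^2(A)=H^2_0(A)$ is not actually needed for this lemma (only $z\in A_0$ unitary) are all accurate.
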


\begin{theorem}[Existence  of compact Hankel maps \cite{LX-Toeplitz}]  \label{LX}
Let $A$ be antisymmetric and suppose that there exists an element $z\in A_0$ (invertible in $M$) such that $zH^2(A)=H^2_0(A)$. Given $f\in M$, the Hankel map $\mathcal{H}_f$ will be compact if $f$ belongs to the norm closed subalgebra generated by $z^{-1}$ and $A$. If indeed $z$ is unitary in $M$, then whenever $\mathcal{H}_f$ is compact, $f$ will conversely necessarily belong to the norm closed subalgebra generated by $z^{-1}=z^*$ and $A$ .
\end{theorem}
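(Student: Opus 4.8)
The plan is to establish the two implications separately, following the outline of the classical Hartman theorem. Throughout, write $P_-=\I-P_+$ and let $\mathcal B$ denote the norm-closed subalgebra of $M$ generated by $z^{-1}$ and $A$. I will use the orthogonal decompositions $H^2(A)=\Cdb\I\oplus H^2_0(A)$ and $L^2(M)=\Cdb\I\oplus H^2_0(A)\oplus H^2_0(A)^*$, which follow from $\tau\circ\Phi=\tau$, antisymmetry and the weak* density of $A+A^*$ in $M$; and the standard facts that $M_aH^2(A)\subseteq H^2(A)$ for $a\in A$, that $H^2(A)\cap M=A$, and hence that $\mathcal H_g=0$ precisely when $g\in A$.

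For the ``if'' part, note first that $f\mapsto\mathcal H_f$ is linear and contractive, so $\mathcal K:=\{f\in M:\mathcal H_f\ \text{is compact}\}$ is a norm-closed subspace; since the words $c_1c_2\cdots c_m$ with each $c_i\in A\cup\{z^{-1}\}$ span a dense subalgebra of $\mathcal B$, it suffices to show each such word lies in $\mathcal K$. The two key covariance identities, valid for $a\in A$ and $y\in M$ and proved by splitting $y\xi$ along $H^2(A)\oplus H^2(A)^\perp$ and using $M_aH^2(A)\subseteq H^2(A)$, are
$$\mathcal H_{ay}=\bigl(P_-M_a|_{H^2(A)^\perp}\bigr)\circ\mathcal H_y,\qquad \mathcal H_{z^{-1}y}=\mathcal H_{z^{-1}}\circ\bigl(P_+M_y|_{H^2(A)}\bigr)+\bigl(P_-M_{z^{-1}}|_{H^2(A)^\perp}\bigr)\circ\mathcal H_y .$$
Next, $\mathcal H_{z^{-1}}$ has rank at most one: for $\xi=\lambda\I+\xi_0$ with $\xi_0\in H^2_0(A)=zH^2(A)$ we have $z^{-1}\xi_0\in H^2(A)$, so $\mathcal H_{z^{-1}}\xi=\lambda\,P_-(z^{-1})$ and the range of $\mathcal H_{z^{-1}}$ lies in $\Cdb\,P_-(z^{-1})$. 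An induction on word length, combining these identities with $\mathcal H_a=0$ and the finite-rankness of $\mathcal H_{z^{-1}}$, then gives $\mathcal H_w$ compact for every word $w$, hence $\mathcal B\subseteq\mathcal K$. (Only the invertibility of $z$ is needed here.)

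For the converse, assume $z$ is unitary and $\mathcal H_f$ is compact. Since left multiplication by $z^n$ is then an isometry of $L^2(M)$, one checks $\|\mathcal H_f|_{z^nH^2(A)}\|=\|\mathcal H_{fz^n}\|$; the subspaces $z^nH^2(A)$ decrease to $\mathcal N:=\bigcap_n z^nH^2(A)$, so the corresponding projections tend strongly to $P_{\mathcal N}$ and, $\mathcal H_f$ being compact, $\|\mathcal H_{fz^n}\|\to\|\mathcal H_f|_{\mathcal N}\|$. Granting that $M_z|_{H^2(A)}$ is a \emph{pure} isometry, i.e.\ $\mathcal N=\{0\}$ --- this is the point at which the preceding Lemma (the weak-operator convergence $M_{z^n}\to0$, \cite{LX-Toeplitz}) is invoked --- we get $\|\mathcal H_{fz^n}\|\to0$. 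By the noncommutative Nehari theorem $\|\mathcal H_g\|=\mathrm{dist}_{\|\cdot\|_\infty}(g,A)$, so there are $a_n\in A$ with $\|fz^n-a_n\|\to0$; multiplying by the unitary $z^{-n}$, $\|f-a_nz^{-n}\|\to0$. Each $a_nz^{-n}$ is a word in $A\cup\{z^{-1}\}$, so $a_nz^{-n}\in\mathcal B$, and since $\mathcal B$ is norm-closed we conclude $f\in\mathcal B$.

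I expect the purity statement $\bigcap_n z^nH^2(A)=\{0\}$ to be the main obstacle: it is what forces the essential norm of $\mathcal H_f$ to ``see'' the full shift structure, and it genuinely uses the hypotheses on $z$ (unitarity together with $zH^2(A)=H^2_0(A)$) rather than being formal --- it is exactly the content of, or is extracted from the proof of, the preceding Lemma from \cite{LX-Toeplitz}. The only other non-elementary input is the noncommutative Nehari theorem, which by now belongs to the standard $H^p$-theory of subdiagonal algebras; the covariance identities, the rank-one computation, and the $L^2(M)$-decomposition are routine.
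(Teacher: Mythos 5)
Your ``if'' direction is sound and fleshes out exactly the route the paper sketches: the two covariance identities, the rank-one computation for $\mathcal{H}_{z^{-1}}$, and induction on word length show that $\mathcal{H}_p$ is finite rank for every polynomial $p$ in $z^{-1}$ and elements of $A$, and norm-closedness of $\{f:\mathcal{H}_f\ \text{compact}\}$ finishes. The converse, however, has a genuine gap at precisely the point you flagged: the purity claim $\mathcal{N}:=\bigcap_n z^nH^2(A)=\{0\}$ is false in general, and it is \emph{not} what the preceding Lemma provides. Since $A$ is antisymmetric, the wandering subspace $H^2(A)\ominus zH^2(A)=H^2(A)\ominus H^2_0(A)=\Cdb\I$ is one-dimensional, so the Wold decomposition of the isometry $M_z|_{H^2(A)}$ reads $H^2(A)=\mathcal{N}\oplus\overline{\operatorname{span}}\{z^n:n\geq0\}$. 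Thus $\mathcal{N}=\{0\}$ would force $H^2(A)=\overline{\operatorname{span}}\{z^n:n\geq0\}$, hence $L^2(M)=\overline{\operatorname{span}}\{z^n:n\in\Zdb\}$ and (as $\I$ is separating) $M=W^*(z)\cong L^\infty(\Tdb)$: purity holds essentially only in the classical disk situation. A concrete counterexample within the scope of the theorem is $M=L(\Zdb^2)\cong L^\infty(\Tdb^2)$ with the lexicographic order, $A$ the associated antisymmetric maximal subdiagonal algebra and $z=e_{(0,1)}$: here $z$ is unitary, $zH^2=H^2_0$, and $M_{z^n}\to0$ in the weak operator topology (so the Lemma's conclusion holds), yet $\mathcal{N}=\overline{\operatorname{span}}\{e_{(m,n)}:m\geq1\}\neq\{0\}$. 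So weak operator convergence of $M_{z^n}$ to $0$ does not yield purity.

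The rest of your reduction is correct: compactness does give $\|\mathcal{H}_{fz^n}\|=\|\mathcal{H}_f P_{z^nH^2}\|\downarrow\|\mathcal{H}_f|_{\mathcal{N}}\|$, and the Nehari-type distance formula together with right multiplication by the unitary $z^{-n}$ finishes once this limit is $0$. What is missing is an argument that compactness of $\mathcal{H}_f$ forces $\mathcal{H}_f|_{\mathcal{N}}=0$, i.e.\ $P_-(f\mathcal{N})=0$; this is the real content of the CMNX-style step the paper defers to \cite{LX-Toeplitz}, and it must exploit the weak operator convergence of $M_{z^{\pm n}}$ differently (in the $\Zdb^2$ example one checks by hand that compactness kills all Fourier coefficients of $f$ in the rows $m<0$, which then gives $f\mathcal{N}\subseteq H^2$). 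As written, your proof of the converse is incomplete at this step.
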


As mentioned in the introduction, the significance of this result, and hence by extension the importance of the Wermer embedding function, can only be appreciated if one notes that there are some group algebraic contexts which admit no non-trivial compact Hankel matrices whatsoever! (See the discussion preceding Definition 10 in \cite{Exel}.) We provide a sketch of the proofs of these 
two results - full details may be found in \cite{LX-Toeplitz}.  
For the lemma, the first step  is to notice that the unitarity of $z$ combined with the fact that $z\in A_0$, ensures that $\{z^n\}$ is an  orthonormal system in $L^2(M)$. Hence $z^n\to 0$ weakly in $L^2(M)$. That in turn ensures that for any $f\in L^2(M)$, $z^nf\to 0$ weakly in $L^1(M)$. The crucial step in the proof is to show that for any $f\in L^2(M)$, we in fact have that $z^nf\to 0$ weakly in $L^2(M)$. The sequence $\{z^nf\}$ is easily seen to to be norm bounded in $L^2(M)$, and so the proof of this fact consists of showing that 0 is the only weak limit point of this sequence. The final step is to check that weak-$L^2$ convergence of $z^nf$ to 0, ensures convergence to 0 of $\{M_{z^n}\}$ in the weak operator topology.

The first claim in Theorem \ref{LX} essentially consists of showing that for any polynomial $p$ in $z^{-1}$ and finitely many elements of $A$, $\mathcal{H}_p$ is finite rank and hence compact. The compactness of $\mathcal{H}_f$ will then follow upon checking that $\mathcal{H}_f$ is the norm limit of such finite rank operators $\mathcal{H}_p$. (This is essentially the same argument as the one followed in \cite[Theorem 2]{CMNX}.) 

To prove the second statement some preparation is necessary. Let $f\in M$ be given.
We first note that a modification of \cite[Lemma 4.5]{LX-Helson} shows that 
$$\|\mathcal{H}_f\| = \sup\{|\tau(fF)|: F\in H^1_0(M), \tau(|F|)\leq 1\}.$$(Since the definition of Hankel maps in \cite{LX-Helson} differs slightly from the one presented here, some checking is necessary.) The next step is to notice that the duality argument in the last part of the proof of \cite[Theorem 3.9]{LX-Helson} suffices to show that $$\sup\{|\tau(fF)|: F\in H^1_0(M), \tau(|F|)\leq 1\} = \inf\{\|f+a\|_\infty : a \in A a\}.$$Combining these observations now yields the fact that $\|\mathcal{H}_f\|=\inf\{\|f+a\|_\infty : a \in A a\}$. This fact, together with the Lemma above, now provides us with all the technology required for the proof of \cite[Lemma 2.3]{CMNX} to go through almost verbatim in the present setting. 

\bigskip

{\em Acknowledgment.}   We thank Sanne Ter Horst for conversations and references on the hyperbolic metric on Harnack parts
and Brian Simanek  for much information on the balayage of probability measures.  We are also indebted to the referees for their 
comments, which much improved the presentation of the paper.

\end{document}